\documentclass[12pt]{amsart}
\usepackage[dvips]{graphicx}
\usepackage{color}

 \usepackage{amsfonts, amssymb,verbatim,amsmath, latexsym, textcomp,amscd}

\topmargin=-1cm
\oddsidemargin=0cm
\evensidemargin=0cm
\textheight=24cm
\textwidth=17cm

\parskip 0.0cm
\parindent 0.7cm
\baselineskip 1.3cm

\def\CC{\mathbb C}

\def\EE{\mathbb E} 
 
\def\HH{\mathbb H}
\def\NN{\mathbb N}

\def\QQ{\mathbb Q}
\def\Qhat {\hat {\mathbb Q}}
\def\RR{\mathbb R}

\def\ZZ{\mathbb Z}

\def\bb{\beta}

\def\a{\alpha}

\def\d{\delta}
\def\e{\epsilon}
\def\g{\gamma}

\def\l{{\lambda}}
\def\m{{m}}
\def\q{q}

\def\s{\sigma}

\def\w{\eta}

\def\bu{{\bf u}}
\def\bv{{\bf v}}
\def\bw{{\bf w}}
\def\bx{{\bf x}}
\def\by{{\bf y}}
\def\bz{{\bf z}}

\def\cal{\mathcal}

\def\B{{ \mathcal B}}
\def\C{{ \mathcal C}}

\def\E{{\mathcal E}}
\def\F{{  \mathcal F}}

\def\N{{\cal N}}

\def\P{{\cal P}}

\def\S{{\mathcal S}}

\def\T{{\cal T}}

\def\V{{\mathcal V}}
\def\W{{\mathcal W}}

   \def\Ax{\mathop{\rm{Ax}}}

    \def\br{\mathop{\rm \bf{br}}}

   \def \inf{\mathop{\rm{inf}}} 
    \def\Int{\mathop{{Int}}}

  \def\SL{SL(2,\mathbb C)}

  \def\Tr{\mathop{\rm{Tr}}}

\def\dd{\partial}

\def\square{\hfill${\vcenter{\vbox{\hrule height.4pt \hbox{\vrule width.4pt
height7pt \kern7pt \vrule width.4pt} \hrule height.4pt}}}$}

\def\co{{\colon \thinspace}}

\newtheorem{theorem}{Theorem}[section]
\newtheorem{definition}[theorem]{Definition}
\newtheorem{lemma}[theorem]{Lemma}
\newtheorem{proposition}[theorem]{Proposition}
\newtheorem{corollary}[theorem]{Corollary}

\newtheorem{introthm}{Theorem}

\begin{document}
\title{Primitive stability and Bowditch's BQ-condition are equivalent}

 \author{Caroline Series}

\address{\begin{flushleft} \rm {\texttt{C.M.Series@warwick.ac.uk \\http://www.maths.warwick.ac.uk/$\sim$masbb/} }\\ Mathematics Institute, 
 University of Warwick \\
Coventry CV4 7AL, UK
\end{flushleft}}

 \begin{abstract}
We prove the equivalence of two conditions on the primitive elements in an $\SL$ representation of the free group  
$F_2 = <a,b>$, which may hold even when the $\SL$ image of $F_2$ is not discrete. One is Minsky's condition of primitive stability and the   other is the $BQ$-condition  introduced by Bowditch and generalised by Tan,  
Wong  and Zhang.  \\
 {\bf Keywords: Free group on two generators, Kleinian  group, non-discrete representation} 
\end{abstract}

\ 
 \date{\today}
\maketitle
 
\noindent {\bf MSC classification:}    {30F40 (primary), 57M50 (secondary).}

 \section{Introduction}

 In this paper we show the equivalence of two conditions on the primitive elements in an $\SL$ representation $\rho$ of the free group  
$F_2 = <a,b>$ on two generators, which may hold even when the image $ \rho(F_2)$ is not discrete. One is the condition of primitive stability $PS$ introduced by Minsky~\cite{Minsky} and the other is the so-called $BQ$-condition  introduced by Bowditch~\cite{bow_mar} and generalised by Tan,  Wong  and Zhang~\cite{tan_gen}.
The same result has been proved independently by  Jaejeong Lee and Binbin Xu~\cite{binbin}.

To facilitate the proof we introduce a third condition which we call the \emph{bounded intersection property} $BIP$ which as we will show is implied by but does not imply either of the other two.

We begin by explaining these three conditions one by one.
Recall that an element $ u \in F_2$ is called \emph{primitive} if it forms one of a generating pair $(u,v)$ for $F_2$. Let $\P$ denote the set of primitive elements in $F_2$.  It is well known that up to inverse and  conjugacy, the primitive elements are enumerated by the rational numbers $\Qhat = \QQ \cup \infty$, see Section~\ref{farey} for details.

\subsection{The primitive stable condition $PS$}

The notion of primitive stability was introduced by Minsky in~\cite{Minsky} in order to construct an $Out(F_2)$-invariant subset of the $\SL$ character variety $\chi(F_2)$ strictly larger than the set of discrete free representations.

Let $d(P,Q)$ denote the  hyperbolic distance  between points $P, Q$
in hyperbolic $3$-space $\HH^3$. 
Recall that a path $t \mapsto \gamma(t) \subset \HH^3$  for $t \in I$ (where $I$ is a possibly infinite interval in $ \RR$) is called a $(K, \epsilon)$-quasigeodesic if there exist constants $K, \epsilon >0 $ such that 
\begin{equation} \label{qgeod} K^{-1}|s-t| - \epsilon \leq d(\gamma(s), \gamma(t)) \leq K|s-t| + \epsilon  \ \ \mbox{\rm {for all}} \ \  s, t \in I. \end{equation}

For  a  representation $\rho \co F_2 \to \SL$, in general we will denote elements in $ F_2$ by  lower case letters and their images under $\rho$ by the corresponding upper case, thus $X = \rho (x)$ for $x \in F_2$. Thus if $(u,v)$ is a generating pair  for  $F_2$  we write $U = \rho(u), V = \rho(v)$.  

Fix once and for all a base point $O \in \HH^3$ and suppose that $w= e_{1} \ldots e_{n} , e_{k} \in  \{u^{\pm}, v^{\pm} \} , i = 1, \ldots, n$ is a cyclically shortest word in the generators $(u,v)$.
The \emph{broken geodesic} $\br _{\rho}(w; (u,v))$  of $w$ with respect to  $(u,v)$  is the infinite path of geodesic segments joining vertices $$\ldots,  \ E_{n-1}^{-1}  E_n^{-1}O, E_n^{-1}O, O, E_1O, E_1 E_2 O,  \ldots,  E_1E_2 \ldots E_n  O, E_1E_2 \ldots E_n E_1 O, \ldots. $$
where $E_i = \rho(e_i)$.   
\begin{definition} \label{definePS}
Let $(u,v)$ be a fixed generating pair for $F_2$. A representation $\rho \co F_2 \to \SL$ is \emph{primitive stable}, denoted $PS$,  if the broken geodesics $\br_{\rho} (w; (u,v))$  for all cyclically shortest words $w= e_{1} \ldots e_{n}  \in \P, e_{k} \in \{ u^{\pm},v^{\pm}\}, k = 1 \ldots, n$,  are  uniformly $(K,\e)$-quasigeodesic for some fixed constants $(K,\e)$.\end{definition}
Notice that this definition is independent of the choice of basepoint $O$ and makes sense since  the change
from 
$\br_{\rho} (w; (u,v))$ to $\br_{\rho} (w; (u',v'))$ for some other generator pair $(u',v')$ 
changes all the constants for all the quasigeodesics uniformly.   

For   $g \in F_2$ write $||g||$  or more precisely $||g||_{u,v}$ for the word length of $g$, that is the shortest representation of $g$ as a product  of generators $(u,v)$.
It is easy to see that for fixed generators, the condition  $PS$ is equivalent to the existence of $K, \epsilon >0 $ such that 
\begin{equation} \label{qgeod1}
K^{-1}||w|| - \epsilon \leq d(O, \rho(w)O) \leq K||w|| + \epsilon \end{equation}
for all  finite cyclically shortest words $ w$  which are subwords of the infinite reduced word    
$ \ldots e_{1} \ldots e_{ n} \ldots e_{ 1} \ldots e_{ n}\ldots $

Recall that an  irreducible representation $\rho \co F_2 \to \SL$  is determined up to conjugation by the traces of $U = \rho(u),V = \rho(v)$ and $UV= \rho(uv)$ where $(u,v)$ is a generator pair for $F_2$. More generally, if we take the GIT quotient of  all (not necessarily irreducible) representations, then the resulting $SL(2,\CC)$ character variety of $F_2$ can be identified with $\CC^3$ via these traces, see for example \cite{goldman2} and the references therein.   (The only non-elementary (hence reducible) representation occurs when
$\Tr [U,V] = 2$. We  exclude this from the discussion, see for example~\cite{sty} Remark 2.1.)

\begin{proposition}[\cite{Minsky} Lemma 3.2] \label{prop:psopen} The set of primitive stable $\rho \co F_2 \to \SL$ is open  in the $\SL$ character  variety of $F_2$.  \end{proposition}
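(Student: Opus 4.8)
The plan is to leverage the stability of quasigeodesics in the Gromov-hyperbolic space $\HH^3$ together with the continuous dependence of the broken geodesics on $\rho$, thereby reducing the infinite family of conditions defining $PS$ to a single finite and manifestly open local check. Fix the generating pair $(u,v)$. Since $\HH^3$ has a universal hyperbolicity constant $\delta$, I would use the local-to-global (Morse) principle for quasigeodesics: for every pair $(K,\e)$ there exist $(K',\e')$ and $L_0>0$, depending only on $K,\e,\delta$, such that any path in $\HH^3$ whose restriction to every subpath of length at most $L_0$ is a $(K,\e)$-quasigeodesic is automatically a global $(K',\e')$-quasigeodesic. The whole proof consists in arranging the hypothesis of this principle to hold uniformly for all $\rho$ near a given $PS$ representation.

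First I would recast $PS$ as a \emph{local} condition with combinatorially bounded pieces. Each $\br_\rho(w;(u,v))$ is the geodesic path through the orbit vertices $E_1\cdots E_k O$, and every one of its steps has length $d(O,UO)$ or $d(O,VO)$. Applying the quasigeodesic inequality (\ref{qgeod1}) to the primitive words $u$ and $v$ themselves, whose broken geodesics are the orbits of $O$ under the cyclic groups $\langle U\rangle$ and $\langle V\rangle$, forces $U$ and $V$ to be loxodromic with $d(O,UO),d(O,VO)\ge K^{-1}$; continuity also bounds these displacements above. Hence for a $PS$ representation the step lengths lie in a fixed interval $[m_0,M_0]$ with $m_0>0$, so a subpath of arc-length at most $L_0$ spans at most $N=\lfloor L_0/m_0\rfloor+1$ letters. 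Consequently, to verify that every broken geodesic of $\rho$ is locally $(K,\e)$-quasigeodesic on subpaths of length $\le L_0$, it suffices to check the inequalities (\ref{qgeod}) along the finite geodesic polygon $O,E_1O,\dots,E_1\cdots E_m O$ for every reduced word $e_1\cdots e_m$ with $m\le N$ — a finite list of words, each giving finitely many inequalities.

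Then I would invoke continuity and conclude. Working on the representation variety $\SL\times\SL$, where $\rho$ is recorded by the pair $(U,V)$, each fixed word $g$ yields a matrix $\rho(g)$, hence vertices and pairwise distances that depend continuously on $(U,V)$; since $PS$ is conjugation-invariant and its representations are non-elementary, proving openness here suffices, openness in the character variety following by the standard descent discussed below. Suppose $\rho_0$ is $PS$ with constants $(K_0,\e_0)$. Its finitely many local polygons satisfy (\ref{qgeod}) with these constants, and its step lengths satisfy $d(O,U_0O),d(O,V_0O)>0$; relaxing the constants to $(2K_0,2\e_0)$ leaves a margin, so by continuity there is a neighborhood $\mathcal U$ of $\rho_0$ on which all of these finitely many polygons are $(2K_0,2\e_0)$-quasigeodesic and the two step lengths remain bounded below by some $m_0>0$. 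By the reduction of the previous paragraph, every broken geodesic of every $\rho\in\mathcal U$ is then locally $(2K_0,2\e_0)$-quasigeodesic on subpaths of length $\le L_0$, and the local-to-global principle upgrades this to a single global $(K',\e')$-quasigeodesic bound, with $(K',\e')$ independent of both $w$ and $\rho\in\mathcal U$. Thus every $\rho\in\mathcal U$ is $PS$, proving openness.

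I expect the crux to be the reduction in the second paragraph, specifically securing a positive lower bound $m_0$ on the step lengths that is uniform over the infinite family of primitive words and stable under perturbation: this is what converts ``subpath of bounded length'' into ``subword of boundedly many letters'' and makes the local check finite. The observation that $PS$ applied to the generators $u,v$ already forces $U,V$ loxodromic is the key mechanism, and one must confirm this positivity survives both the relaxation to the constants $(2K_0,2\e_0)$ and the passage to a neighborhood. The remaining points — the precise form of the local-to-global principle and the descent of an open conjugation-invariant set from $\SL\times\SL$ to the character variety via the open conjugation quotient map at non-elementary points — are standard.
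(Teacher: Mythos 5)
Your strategy is the standard one: the paper itself gives no proof of this proposition (it simply cites \cite{Minsky}, Lemma 3.2), and Minsky's own argument is precisely the local-to-global scheme you describe — reduce primitive stability, with relaxed constants, to finitely many conditions on words of bounded length, each varying continuously in $\rho$, then invoke stability of local quasigeodesics in $\HH^3$. Your extraction of the step-length lower bound $d(O,UO)\ge K^{-1}$ from quasigeodesicity of the $\langle U\rangle$-orbit, the relaxation of constants to create a margin, and the descent from the representation variety to the character variety at irreducible representations are all fine.

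There is, however, a genuine flaw in the reduction, at the sentence ``Its finitely many local polygons satisfy \eqref{qgeod} with these constants.'' Your finite list consists of \emph{all} reduced words of length at most $N$, but primitive stability of $\rho_0$ only controls polygons of words occurring as subwords of the bi-infinite periodic words $\ldots www\ldots$ with $w$ cyclically reduced and primitive. These form a proper subset of the reduced words: a cyclically reduced primitive word in $F_2$ uses, up to inversion, only letters from one of the sets $\{u,v\}$, $\{u,v^{-1}\}$, $\{u^{-1},v\}$, $\{u^{-1},v^{-1}\}$, so a word such as $uvu^{-1}v^{-1}$, or any of its powers, never occurs. And the condition $PS$ genuinely fails to control such polygons: nothing in $PS$ prevents the commutator $[U,V]$ from being elliptic (such representations exist, e.g.\ holonomies of hyperbolic cone tori, cf.\ \cite{lupi}), in which case $d(O,[U,V]^kO)$ stays bounded while the lower bound in \eqref{qgeod} demanded of the polygon of $(uvu^{-1}v^{-1})^{k}$ grows linearly in $k$; so for large $N$ the representation $\rho_0$ does not satisfy the conditions you impose, your set $\mathcal{U}$ need not contain $\rho_0$, and openness does not follow. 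The repair is immediate and costs nothing: take the finite list to be only those reduced words of length at most $N$ which are subwords of bi-infinite primitive words. That restricted list is exactly what the local check of a primitive broken geodesic requires (so the sufficiency direction is untouched), and for it the base-point claim is true, because subpaths of $(K_0,\epsilon_0)$-quasigeodesics are $(K_0,\epsilon_0)$-quasigeodesics. With that single change your proof is complete and agrees in substance with Minsky's.
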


 Minsky showed that not all $PS$ representations are discrete.

\subsection{The Bowditch $BQ$-condition}

The notion of primitive stability was introduced by Bowditch in~\cite{bow_mar} in order to give a purely combinatorial proof of McShane's identity.

Again let $(u,v)$ be a generator pair for $F_2$ and  let $\rho \co F_2 \to \SL$. 

\begin{definition} \label{defineBQ}
Following~\cite{tan_gen}, an irreducible  representation $\rho \co F_2 \to \SL$ is said to satisfy the $BQ$-condition if 
\begin{equation}  \label{eqn:B2}
 \begin{split}  & \Tr \rho(g) \notin [-2,2]  \ \ \forall g \in \P    \ \ \mbox {\rm and}   \ \ \cr
& \{ g \in \P: |\Tr \rho(g)| \leq 2 \} \ \mbox {\rm is finite}.\end{split}\end{equation}
\end{definition}

We denote the set of all representations satisfying the $BQ$-condition by $\B$.

\begin{proposition}[\cite{bow_mar} Theorem 3.16, \cite{tan_gen} Theorem 3.2] \label{prop:bqopen}  The set    $\B$ is open in the $\SL$ character  variety of $F_2$.    \end{proposition}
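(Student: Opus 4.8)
The plan is to prove that $\B$ is open by showing its complement is closed, using the combinatorial structure of primitive elements under $\GLTwoZ$-action together with a key \emph{trace-reduction} (or \emph{Markov triple}) mechanism. The central tool is that primitive elements of $F_2$, up to conjugacy and inverse, are organised by the Farey/Stern--Brocot tree, and there is a trace identity that relates the trace of a ``child'' primitive word to the traces of its two ``parent'' neighbours on this tree. Specifically, if $u, v, uv$ are neighbouring primitives then the Fricke identity $\Tr\rho(uv) + \Tr\rho(uv^{-1}) = \Tr\rho(u)\,\Tr\rho(v)$ governs how traces propagate: whenever all three of $|\Tr\rho(u)|, |\Tr\rho(v)|, |\Tr\rho(uv)|$ are comparatively large (say $> 2$ with the largest dominating), one of the two neighbours across an edge has strictly larger trace modulus, so iterating pushes traces to grow and forces the set of small-trace primitives to be finite and controlled.

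\medskip

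\emph{Key steps in order.} First I would fix the identification of $\P/(\text{conj},\text{inv})$ with $\Qhat$ via the Farey triangulation, and record the neighbour trace relation above; this lets me regard the function $g \mapsto \Tr\rho(g)$ as a function on the vertices of the Farey tree satisfying a local recursion. Second, I would establish the quantitative ``escape'' lemma from \cite{bow_mar},\cite{tan_gen}: there is a universal constant (independent of $\rho$ in a neighbourhood) so that outside a finite, uniformly bounded region of the tree, the trace moduli increase along the tree as one moves away from the region of small traces, and moreover grow at least geometrically. This is exactly the content that makes $\{g \in \P : |\Tr\rho(g)| \le 2\}$ finite. Third, and this is where openness emerges, I would observe that the finite set of ``attracting'' vertices where small traces can occur depends only on finitely many trace coordinates of $\rho$, each of which varies continuously (indeed polynomially) over the character variety $\CC^3$. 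For a fixed $\rho_0 \in \B$ the condition \eqref{eqn:B2} holds, so there is a \emph{finite} set $F \subset \P$ outside of which $|\Tr\rho_0(g)| > 2$ with a definite gap, and the escape mechanism certifies that no new small traces can appear outside an a priori bounded region. Fourth, I would perturb: since each of the finitely many relevant traces $\Tr\rho(g)$ for $g$ in the bounded region is continuous in $\rho$, for $\rho$ close to $\rho_0$ all these traces stay outside $[-2,2]$; and the uniform escape estimate (which holds with the same constants on a neighbourhood of $\rho_0$ by continuity of its finitely many input traces) guarantees that no primitive outside the bounded region can acquire a trace in $[-2,2]$ either. Hence \eqref{eqn:B2} persists on a whole neighbourhood.

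\medskip

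\emph{Main obstacle.} The delicate point is making the escape estimate \emph{uniform in a neighbourhood} of $\rho_0$ rather than just for the single representation $\rho_0$. The recursion $\Tr\rho(uv) + \Tr\rho(uv^{-1}) = \Tr\rho(u)\Tr\rho(v)$ guarantees growth only once the traces are large enough to dominate the additive term; near the boundary of the large-trace region one must control the finitely many ``exceptional'' configurations carefully and verify that the threshold past which geometric growth kicks in can be chosen uniformly over a small enough neighbourhood of $\rho_0$. This amounts to checking that the constants in the combinatorial Bowditch argument depend only continuously on the finitely many seed traces, which follows because only finitely many trace coordinates control the entire propagation, but it is the step that requires the most care. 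I would therefore lean directly on the quantitative statements in \cite{bow_mar} Theorem 3.16 and \cite{tan_gen} Theorem 3.2, which establish precisely this uniformity, and present the proof as a consequence of their machinery together with the continuity of traces on the character variety $\CC^3$.
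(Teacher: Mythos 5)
The paper offers no independent proof of this proposition---it is imported verbatim from Bowditch (\cite{bow_mar}, Theorem 3.16) and Tan--Wong--Zhang (\cite{tan_gen}, Theorem 3.2)---and your proposal, which sketches the Farey-tree trace recursion, the escape/attracting-region mechanism, and the continuity of the finitely many controlling trace coordinates before deferring to those same quantitative theorems, is a correct outline of exactly the argument those references give. Your approach therefore coincides with the paper's (both rest on the cited machinery), and your identification of the uniform-in-$\rho$ escape estimate as the delicate point is precisely where the cited theorems do the work.
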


Bowditch's original work~\cite{bow_mar} was on the case in which  the commutator $[X,Y] = XYX^{-1}Y^{-1}$
is parabolic and $\Tr  [X,Y] =  -2 $.  He conjectured that all  representations in $\B$ of this type are quasifuchsian and hence discrete. While this question remains open, it is shown  in~\cite{sty} that without this restriction, there are definitely  representations in $\B$ which are not discrete.

\subsection{The bounded intersection property BIP}\label{sec:BIP}

Recall that a word $w= e_1e_2 \ldots e_n$ in generators $(u,v)$  of $F_2$ is \emph{palindromic}  if  it reads the same forwards and backwards, that is, if $  e_1e_2 \ldots e_n = e_ne_{n-1} \ldots e_1$. Palindromic words have been studied by 
Gilman and Keen in~\cite{gilmankeen1, gilmankeen2}.

Suppose that $\rho \co F_2 \to \SL$ and let $(u,v)$ be a generating pair. Denote  the extended common perpendicular of the axes of $U = \rho(u), V = \rho(v)$ by   $\E(U,V)$. 
By applying the $\pi$ rotation about $\E(U,V)$, it is not hard to see  that a word $w$ is palindromic in a generator pair $(u,v)$  if and only if  the axis of $W = \rho(w)$ intersects   $\E(U,V)$ perpendicularly, see for example~\cite{BSeries}. 

Fix  generators $(a,b)$ for $F_2$. We call the pairs $(a,b), (a,ab)$ and $(b, ab)$ the  \emph{basic generator pairs}.  (The order $ab$ or $ba$ is fixed but not important, see below.)
Now given $\rho \co F_2 \to \SL$ let $A = \rho(a), B = \rho(B)$ and consider the  three common perpendiculars $\E(A,B), \E(A,AB)$ and $\E(B,AB)$. 
 (We could equally well  chose  to use $BA$ in place of $AB$; the main point is that the choice is fixed once and for all.) 
We call  these lines the \emph{special hyperelliptic axes}.

\begin{definition} \label{defineBIP} 
Fix a basepoint $O \in \HH^3$. A representation 
$\rho \co F_2 \to \SL$ satisfies the \emph{bounded intersection property} $BIP$  if there exists  $D>0$ so that  
if a generator $w$ is palindromic with respect to one of the three basic generators pairs, then its axis intersects the corresponding special hyperelliptic axis in a point at distance at most $D$ from $O$. Equivalently, the axes of all palindromic primitive elements intersect the appropriate hyperelliptic axes in bounded intervals.
\end{definition}
Clearly this definition is independent of the choices of $(a,b)$ and $O$. 

A similar condition but related to \emph{all} palindromic axes was used in~\cite{gilmankeen2} to give a condition for discreteness of geometrically finite groups.

In Proposition~\ref{uniquepalindromes}, we show that every generator is conjugate to one which is palindromic with respect to one of the three basic generator pairs. In fact each primitive element can be conjugated (in different ways) to be palindromic with respect to two out of the three possible basic pairs, for a more precise statement see below.

\subsection{The main result} 

The main theorem of this paper is
\begin{introthm} \label{introthmA}The conditions $BQ$ and $PS$ are equivalent. Both imply, but are not implied by, the condition $BIP$. \end{introthm}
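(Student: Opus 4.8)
The plan is to reformulate all three conditions in terms of the behaviour of the trace and translation-length functions along the Farey tree that enumerates $\P$, and to observe that the quasigeodesic condition $PS$ splits into a \emph{longitudinal} part (the translation length of $\rho(w)$ should be comparable to the cyclic word length $\|w\|$) and a \emph{transverse} part (the basepoint $O$ should lie within bounded distance of the axis of $\rho(w)$). The transverse part is precisely what $BIP$ records, via the fact (Proposition~\ref{uniquepalindromes}) that every primitive is conjugate to a palindrome with respect to one of the three basic pairs, together with the perpendicularity of a palindromic axis to the corresponding hyperelliptic axis. With this dictionary in place the theorem reduces to controlling trace growth and to a local-to-global estimate for broken geodesics.

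I would first dispose of the easy direction $PS\Rightarrow BQ$. If the broken geodesics are uniformly $(K,\e)$-quasigeodesic, then applying \eqref{qgeod1} to the powers $w^n$ of a cyclically reduced primitive $w$ shows that $d(O,\rho(w^n)O)\ge K^{-1}n\|w\|-\e$ grows linearly, so $\rho(w)$ is loxodromic with translation length $\ell$ bounded below by $K^{-1}\|w\|$. Since $2\sinh(\ell/2)\le|\Tr\rho(w)|$, a uniform lower bound on $\ell$ forces $|\Tr\rho(w)|$ to exceed $2$ for all but the finitely many $w$ of small cyclic length, while positivity of $\ell$ rules out elliptic and parabolic primitives; together these give both clauses of \eqref{eqn:B2}.

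The substance is the converse $BQ\Rightarrow PS$, which I would organise around the Fricke trace relations on the Farey tree. Writing $x=\Tr U,\ y=\Tr V,\ z=\Tr UV$, the neighbour relation $\Tr(UV)+\Tr(UV^{-1})=\Tr U\,\Tr V$ and the Markov relation $x^2+y^2+z^2-xyz=\Tr[U,V]+2$ propagate trace estimates outward from a bounded core. The $BQ$-hypothesis guarantees that the set of edges carrying $|\Tr|\le 2$ is finite and, after a combinatorial argument in the spirit of Bowditch, concentrated in a bounded subtree beyond which the traces increase; the key quantitative step is to upgrade this to a definite geometric growth rate, so that $\ell(\rho(w))$ is bounded below by a uniform multiple of $\|w\|$, supplying the longitudinal estimate. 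For the transverse estimate I would prove $BQ\Rightarrow BIP$: the intersection of a palindromic axis with its hyperelliptic axis is a function of the relevant traces, and the $BQ$-bounds confine it to a fixed distance of $O$. Combining the two bounds makes $d(O,\rho(w)O)$ comparable to $\|w\|$ on full periods, and a Morse-type local-to-global argument in $\HH^3$ — consecutive geodesic segments of definite length meeting at controlled angles cannot backtrack — upgrades this to the uniform quasigeodesic property for all subwords, i.e.\ $PS$.

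I expect the main obstacle to be this quantitative trace-growth step: turning Bowditch's qualitative statement that traces increase away from the core into a uniform geometric lower bound valid simultaneously for \emph{every} primitive, and then feeding it through the local-to-global lemma with constants independent of $w$. Finally, to see that $BIP$ implies neither condition it suffices to exhibit a representation in which every primitive remains loxodromic with its axis meeting the appropriate hyperelliptic axis in a bounded interval, yet infinitely many primitives have $|\Tr|\le 2$; such a representation satisfies $BIP$ but violates the finiteness clause of \eqref{eqn:B2}, hence fails $BQ$ and, by the equivalence just established, $PS$.
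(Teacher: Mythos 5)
Your outline reproduces the paper's skeleton --- $PS\Rightarrow BQ$ via linear growth of translation lengths (this part is correct and matches Proposition~\ref{PSimpliesBQ}), $BQ\Rightarrow BIP$ as the ``transverse'' step, then a local-to-global argument for broken geodesics --- but the central step $BQ\Rightarrow PS$ is asserted rather than proved, and the missing mechanism is precisely the content of Sections~\ref{BQimpliesBIP} and~\ref{quasigeodesics}. Your dictionary ($PS$ = longitudinal length growth plus transverse basepoint control) cannot be fed into a Morse-type lemma directly: the broken geodesic $\br_\rho(w;(a,b))$ has segments of \emph{uniformly bounded} length $d(O,AO)$, $d(O,BO)$, so a lemma of the form ``long segments meeting at angles bounded below give quasigeodesics'' (Lemma~\ref{brokengeod}) does not apply to it, and nothing in your proposal bounds the bending angles. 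The paper resolves this by rewriting: using the sink tree, plugholes and wakes it shows that every primitive class outside a finite set is, up to cyclic permutation, a positive word in one of \emph{finitely many} palindromic generator pairs with long translation lengths; for such pairs $BIP$, exploited through the palindromic symmetry in Lemma~\ref{projtoaxis}, supplies the angle bounds (Lemmas~\ref{applyLongbending1} and~\ref{applyLongbending}), and the infinite families $(u^Nv,u^{N+1}v)$ need the separate uniform-in-$N$ treatment of Proposition~\ref{longwordsqgeod1}, since their segments must themselves be subdivided. You flag exactly this as ``the main obstacle'', but flagging it is not supplying the idea (rewriting in finitely many auxiliary pairs) that overcomes it. Note also that $BIP$ controls only the palindromic representative of each class, not the axis of an arbitrary cyclic permutation, so even the claimed equivalence of your ``transverse part'' with $BIP$ requires Proposition~\ref{uniquepalindromes} and an argument, not a restatement.

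The second gap is the non-implication. You say it ``suffices to exhibit'' a representation with all primitives loxodromic, with axes meeting the hyperelliptic axes in bounded intervals, yet with infinitely many primitives of absolute trace at most $2$ --- but you never produce one, and such a representation would be delicate to construct. The paper's example is trivial by comparison: any $\SUTwo$ (i.e.\ $SO(3)$) representation. There every primitive is elliptic --- loxodromy is not needed, since elliptic elements also have axes --- and all axes pass through the common fixed point, which lies on all three special hyperelliptic axes, so $BIP$ holds while the first clause of \eqref{eqn:B2} fails for every primitive. Without either a construction of your proposed example or this observation, the ``not implied by'' half of the theorem is unproved in your proposal.
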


 In the case of real representations, Damiano Lupi~\cite{lupi} showed by case by case analysis following \cite{goldman} that the conditions $BQ$ and $PS$ are equivalent.
 
 To see that $BIP$ does not imply the other conditions, first note  that conditions $PS$ and $BQ$ both imply that no element in $\rho(\P)$ is elliptic or parabolic. The condition $BIP$ rules out parabolicity (consider the fixed point of a palindromic parabolic element to be a degenerate  axis which clearly meets the relevant  hyperelliptic axis at infinity). However  the condition does not obviously rule out elliptic elements in $\rho(\P)$.
In particular, consider any  $SO(3)$ representation, discrete or otherwise. Here all axes are elliptic and all pass through a central fixed point which is also at the intersection of all three hyperelliptic axes.  Such a representation clearly satisfies $BIP$. 

The plan of the paper is as follows. In Section~\ref{farey} we present background on the Farey tree and prove an important result on palindromic representation of primitive elements, Proposition~\ref{uniquepalindromes}. We also introduce Bowditch's condition of Fibonacci growth. In Section~\ref{Bowditchbackground}, we summarise Bowditch's method of assigning an orientation to  the edges of the Farey tree
which results in a characterisation of the BQ-condition in terms of the existence of a certain finite attracting subtree.

Using the condition of Fibonacci growth (see Definition~\ref{fibonaccidefn}), it is not hard to show that $PS$ implies $BQ$. This was proved  in~\cite{lupi} and  is sketched in Section~\ref{BQimpliesBIP}.  Developing the results of Section~\ref{Bowditchbackground}, we then make estimates which prove (Theorem~\ref{thm:BQimpliesBIP}) that $BQ$ implies $BIP$.

Finally in Section~\ref{quasigeodesics} we use Theorem~\ref{thm:BQimpliesBIP} to prove  that $BQ$ implies $PS$. 
The advantage of the condition $BIP$ over $BQ$  is that it ties down the location of  axes.
After some preliminary work on quasigeodesics,  which heavily relies on the condition $BIP$,  we obtain further results which will eventually allow us to control broken geodesic paths for all but finitely many generator pairs.  The proof of Theorem~\ref{introthmA} is completed by Theorem~\ref{prop:BIBQimpliesPS} which shows that $BQ$ implies $PS$.

We would like to thank Tan Ser Peow  and Yasushi Yamashita for initial discussions about this paper.  The work involved in Lupi's thesis~\cite{lupi}  also made a significant contribution. The idea  of introducing the condition $BIP$ arose while  trying to interpret  some very interesting computer graphics involving non-discrete groups made by Yamashita. We hope to return to this topic elsewhere.

 \section{Primitive elements, the Farey tree and Fibonacci growth}  \label{farey} The Farey diagram $\F$ as shown in Figures~\ref{fig:farey} and \ref{fig:colouredtree}  consists of  the images of the ideal triangle with vertices at $1/0,0/1$ and $1/1$ under the action of $SL(2,\ZZ)$ on the upper half plane, suitably conjugated to the position shown in the disk. The label $p/q$ in the disk is just the conjugated image of the actual point $p/q \in \RR$.

\begin{figure}[ht]
\includegraphics[width=5.5cm]{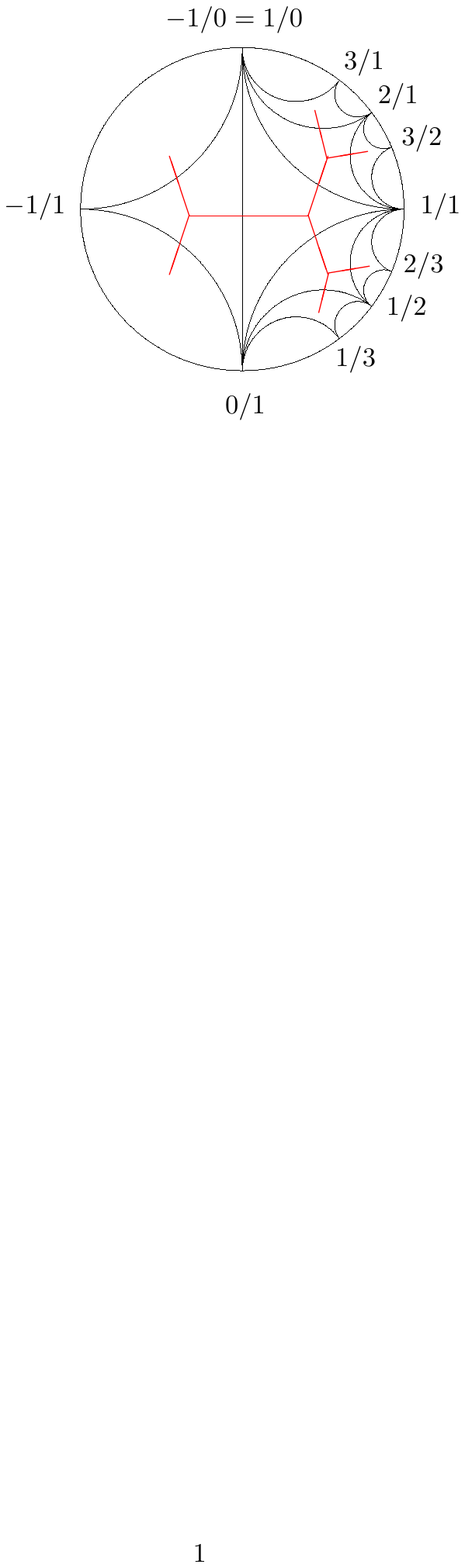}
\hspace{1cm}
\includegraphics[width=5.5cm]{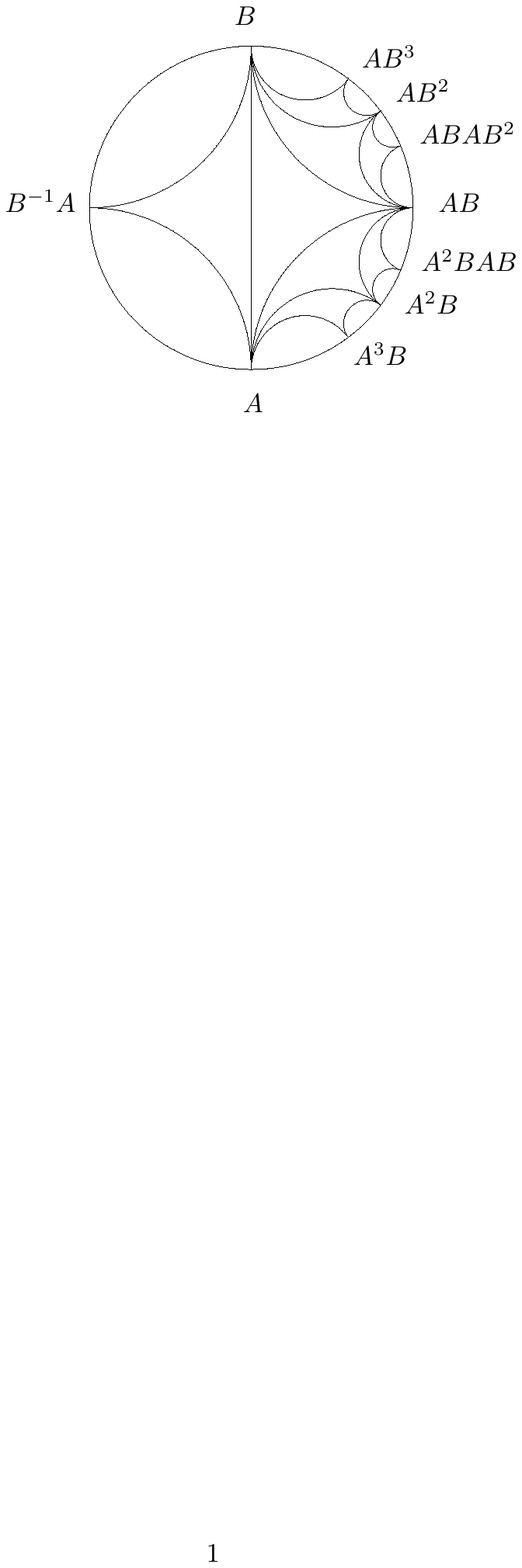}
\caption{The Farey diagram, showing the arrangement of rational numbers on the left with the corresponding  primitive words on the right.}\label{fig:farey}
\end{figure}

  Since the rational points in $\Qhat = \QQ\cup \infty$ are precisely the images of $\infty$ under $SL(2,\ZZ)$, they correspond bijectively to  the vertices of $\F$.
A pair  $p/q , r/s \in \hat \QQ$ are the endpoints of an edge  if and only if $pr-qs = \pm 1$; such pairs are called \emph{neighbours}.
A triple of points in $\hat \QQ$ are the vertices of a triangle precisely when they are the images of the vertices of the initial triangle $(1/0,0/1,1/1)$; such triples are always of the form 
$(p/q , r/s,(p+r )/( q+s))$ where  $p/q , r/s$ are neighbours.
In other words,  if $p/q , r/s$ are the endpoints of an edge, then the vertex of the triangle on the side away from the centre of the disk is found by `Farey addition' to be $(p+r )/( q+s)$. Starting from $1/0 = -1/0=  \infty$ and $0/1$, all   points in $\hat \QQ$ are obtained recursively  in this way. (Note we need to start with $-1/0=  \infty$  to get the negative fractions on the left side of the left hand diagram in Figure~\ref{fig:farey}.)

As noted in the introduction, up to inverse and conjugation, the primitive elements  in $F_2$ are enumerated by $\Qhat$.
Formally, we set $\overline \P $ to be the set of equivalence classes of primitive elements under the relation  $u \sim v$ if and only if either $v =  gug^{-1}$ or $v = gu^{-1}g^{-1}, g \in F_2$. We call the equivalence classes,  \emph{extended conjugacy classes}. In particular, the set of all cyclic permutations of a given word are in the same extended class. Since we are working in the free group, a word is  \emph{cyclically shortest} if it, together with all its cyclic permutations,  is reduced, that is, contains no occurrences of $x$ followed by $x^{-1}, x \in \{a^{\pm}, b^{\pm}\}$.  

The right hand picture in Figure~\ref{fig:farey}
 shows an enumeration of representative elements from $\overline \P $, starting with initial triple $(a,b,ab)$. Each vertex is labelled by a certain cyclically shortest generator $w_{p/q}$.  Corresponding to the process of Farey addition, the words  $w_{p/q}$ can be found by juxtaposition as indicated on the diagram.  Note that for this to work it is important to preserve the order:   if $u,v$ are the endpoints of an edge  with $u$ before $v$ in the anti-clockwise order round the circle, the correct  concatenation is $uv$. Note also that the words on the left side of the diagram involve $b^{-1}a$ corresponding to starting with $\infty = -1/0 $. 
It is not hard to see that pairs of primitive elements form a generating pair if and only if they are at the endpoints of an edge, while the words at the vertices of a triangle correspond to a generator triple of the form $(u,v,uv)$.

 The word $w_{p/q}$ is a representative of the extended conjugacy class identified with $p/q \in \hat \QQ $. We denote this class by $ [p/q]$.  
  It is easy to see that $e_a(w_{p/q}) / e_b(w_{p/q}) = p/q$, where $e_a(w_{p/q}),  e_b(w_{p/q})$ are the sum of the exponents in $w_{p/q}$ of $a,b$ respectively.  All other words in $ [p/q]$ are cyclic permutations of $w_{p/q}$ or its inverse. In what follows we largely focus on different representatives of the class which are palindromic with respect to one of the basic generator pairs, see Proposition~\ref{uniquepalindromes} below.

 \subsection{Generators and palindromicity}
Let $\EE = \{0/1,1/0, 1/1\}$ and define a map $\bb \co \hat \QQ \to \EE$  by $\bb(p/q) = \bar p /\bar q$, where   $\bar p ,\bar q$ are the  mod 2 representatives of $p,q$ in $\{0,1\}$. We refer to $\bb(p/q) $ as the mod 2  equivalence class of $p/q$. 
Say $p/q \in \Qhat $ is of type $\w \in \EE$ if  $\bb(p/q) = \w$. Say a generator $u \in F_2$ is of type $\w$ if $u \in [p/q]$ and $p/q$ is of type $\w$; likewise a generator pair $(u,v)$  is type $(\w, \w') $  if  $u, v$ are of types $\w, \w'$ respectively. 
As in Section~\ref{sec:BIP}, we fix once and for all a generator pair $(a,b)$ and identify $a$ with $0/1$, $b$ with $1/0 $ and $ab$ with $1/1$. The \emph{basic generator pairs} are the three (unordered) generator pairs $(a,b)$, $(a,ab)$ and $(b,ab)$ corresponding to $(0/1,1/0 )$, $(0/1,1/1 )$ and $(1/0,1/1 )$ respectively.  (Here the order $ba$ or $ab$ is not important but fixed.) 
For $\w,\w' \in \EE$ we say $u$ is palindromic with respect to $(\w,\w'),  \w \neq \w'$  if it is palindromic when rewritten in terms of the basic pair of generators corresponding to $(\w,\w')$; equally we say that a generator pair  $(u,v)$  is cyclically shortest (respectively palindromic with respect to the pair  $(\w,\w')$) if  each of $u,v$  have the same property. We refer to a generator pair  $(u,v)$ which is palindromic with respect to some pair of generators, as a \emph{palindromic pair}. Finally, say a generator pair $(u,v)$ is conjugate to a pair $(u',v')$ is there exists $g  \in F_2$ such that 
 $gug^{-1} = u'$ and $gvg^{-1} = v'$.

 \begin{proposition} \label{uniquepalindromes}
  If $u \in \P$  is of type $\w \in \EE$, then,  for each $\w' \neq \w$,  up to inverses there is exactly one conjugate generator $u'$ which  is cyclically shortest and palindromic with respect to $(\w,\w')$. 
If $(u,v)$ is a  generator pair of type $(\w,\w')$, then up to inverses, there is exactly one conjugate generator pair $(u',v')$ which  is cyclically shortest and palindromic with respect to $(\w,\w')$. 
  \end{proposition}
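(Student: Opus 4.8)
The plan is to translate palindromicity into a fixed-point condition for an involution of $F_2$ and then to exploit the action of these involutions on extended conjugacy classes. For each basic generator pair let $\sigma$ denote the automorphism of $F_2$ that inverts both of its generators; for the pair $(a,b)$ this is $\sigma(a)=a^{-1},\,\sigma(b)=b^{-1}$, and for the other two pairs it is the analogous map written in the corresponding generators. First I would record the elementary equivalence, already implicit in the geometric remark preceding Definition~\ref{defineBIP}, that a word $w$ is palindromic with respect to a basic pair if and only if $\sigma(w)=w^{-1}$ for the corresponding $\sigma$; on the level of axes this is exactly the statement that the axis of $\rho(w)$ meets $\E$ perpendicularly, since $\sigma$ is realised by the $\pi$-rotation about the relevant special hyperelliptic axis.

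Next I would pin down the role of the type $\w$. Writing a palindrome $w=e_1\cdots e_n$ with $e_i=e_{n+1-i}$, the letters pair off except for a middle letter when $n$ is odd; reading off the exponent sums modulo $2$ shows that an even-length palindrome has type $(0,0)$ and hence is never primitive, while an odd-length palindrome has type equal to that of its middle letter. Consequently a primitive palindrome with respect to a basic pair must have type equal to one of the two generators of that pair. This already forces the indexing in the statement: a generator of type $\w$ can be palindromic only with respect to the two basic pairs that contain a generator of type $\w$, namely those of type $(\w,\w')$ with $\w'\neq\w$.

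For existence I would use that every such $\sigma$ induces the identity on $\overline{\P}\cong\Qhat$. Indeed $\sigma$ acts as $-\mathrm{Id}$ on the abelianisation $H_1(F_2)\cong\ZZ^2$, hence trivially on $\PP^1(\QQ)=\Qhat$, so $\sigma$ fixes every extended class $[p/q]$. For the single-generator statement this means $\sigma(u)$ is conjugate to $u$ or to $u^{-1}$, and the parity computation above shows the sign is $-1$ precisely when $u$ has a type compatible with the pair; a short centraliser calculation then produces a conjugate $u'$ with $\sigma(u')=(u')^{-1}$, and passing to the cyclically shortest representative makes it cyclically reduced. For the generator-pair statement I would argue that if both $u'$ and $v'$ are palindromic with respect to a basic pair then $\sigma$ inverts both generators of the generating pair $(u',v')$, so $\sigma$ \emph{equals} the hyperelliptic involution $\sigma_{u',v'}$; thus realising $(u,v)$ as a palindromic pair is the same as conjugating $\sigma_{u,v}$ to one of the three basic involutions. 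The type $(\w,\w')$ determines which of the three $\mathrm{Inn}(F_2)$-classes $\sigma_{u,v}$ lies in — there are exactly three, corresponding to the three Weierstrass points of the once-punctured torus — and this yields the required conjugator.

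For uniqueness I would use that a primitive element is not a proper power, so the associated cyclic word is aperiodic and its dihedral symmetry group contains at most one reflection; hence there is at most one palindromic cyclic rotation, which together with its inverse gives exactly one palindromic cyclically shortest representative up to inverses. For pairs the corresponding rigidity is that the fixed subgroup of $\sigma$ is trivial (no nonempty reduced word is fixed by inversion of every letter), so the centraliser of $\sigma$ in $\mathrm{Inn}(F_2)$ is trivial and the conjugator producing the palindromic pair is unique up to the evident inversion. The main obstacle I anticipate is the simultaneous palindromisation in the pair case: matching a single conjugator to both generators at once, equivalently establishing that there are exactly three inner-conjugacy classes of involutions of $F_2$ lifting $-\mathrm{Id}$ and that the type reads off the class. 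I expect to prove this either by the fixed-subgroup analysis sketched above, or by induction along the Farey tree $\F$, propagating a palindromic generating pair across each elementary Farey move using that the juxtaposition $PQ$ of two palindromes is conjugate to its own reversal $QP$, and hence to a palindrome, while tracking cyclic reducedness and type at each step.
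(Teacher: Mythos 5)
Your uniqueness arguments are correct and genuinely different from the paper's. For a single generator, you argue that a primitive element is not a proper power, so its cyclic word admits no nontrivial rotational symmetry, hence at most one reflection symmetry (two reflections would compose to a nontrivial rotation), hence at most one palindromic rotation up to inverses; the paper instead runs a periodicity argument on the bi-infinite word $\ldots uu\ldots$, which is essentially the same fact in different clothing. For pairs your argument is slicker than the paper's: two palindromic pairs in the same (strict) conjugacy class force the conjugator $g$ to satisfy $\sigma(g)=g$, and the fixed subgroup of $\sigma$ is trivial, so $g=1$. That is clean and complete.

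The existence half, however, has a genuine gap, and it sits exactly at the crux. You assert that since $\sigma$ acts as $-\mathrm{Id}$ on $H_1$ and hence trivially on $\Qhat$, one gets $\sigma(u)$ conjugate to $u$ or $u^{-1}$, and that ``the parity computation shows the sign is $-1$ precisely when $u$ has a type compatible with the pair,'' after which ``a short centraliser calculation'' produces a palindromic conjugate. The sign claim is false: by abelianisation alone, $\sigma(u)\sim u^{-1}$ for \emph{every} primitive $u$ and every one of the three involutions, compatible type or not. For example $u=ab$ (type $1/1$) satisfies $\sigma(ab)=a^{-1}b^{-1}\sim(ab)^{-1}$ for $\sigma\colon a\mapsto a^{-1},\,b\mapsto b^{-1}$, yet by your own parity computation no conjugate of $ab$ is palindromic in $(a,b)$. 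So the condition $\sigma(u)\sim u^{-1}$ cannot imply existence of a palindromic conjugate by any centraliser computation: writing $\sigma(u)=gu^{-1}g^{-1}$, what you actually need is a solution $h$ of the twisted equation $h^{-1}\sigma(h)\in \langle u\rangle\, g^{-1}$, and the solvability of this equation is precisely where type-compatibility must enter; your proposal never addresses it. Similarly, for pairs you yourself flag the key claim --- that there are exactly three $\mathrm{Inn}(F_2)$-conjugacy classes of involutive automorphisms lifting $-\mathrm{Id}$, indexed by type --- as an anticipated obstacle rather than proving it; but that claim essentially \emph{is} the existence statement, so deferring it leaves the main work undone. (Your fallback, induction along the Farey tree propagating palindromic pairs across Farey moves, is exactly the paper's proof: it colours the edges of the tree by the three types, inducts on the ``coloured level,'' and uses the explicit computation that $((vu^{-1})^nv,(vu^{-1})^{n+1}v)$ is palindromic in a palindromic pair $(u,v)$; carried out carefully, that route works, but it is the part you have only gestured at.)
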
 
\begin{proof} 
We begin by proving  the existence part of the second statement.  Observe that the edges of the Farey tree $\T$ may be divided into three classes, depending on the mod two equivalence classes of the generators labelling the neighbouring regions. 
In this way we may assign colours $r,g,b$ to the pairs 
$(0/1,1/0); (0/1, 1/1 ); (1/0, 1/1 )$ respectively and extend to a map $col$ from edges to $\{r,g,b\}$, see Figure~\ref{fig:colouredtree}. Note that no two edges of the same colour are adjacent, and that the colours round the boundary of each complementary region alternate.

\begin{figure}[ht]
\includegraphics[width=7.5cm]{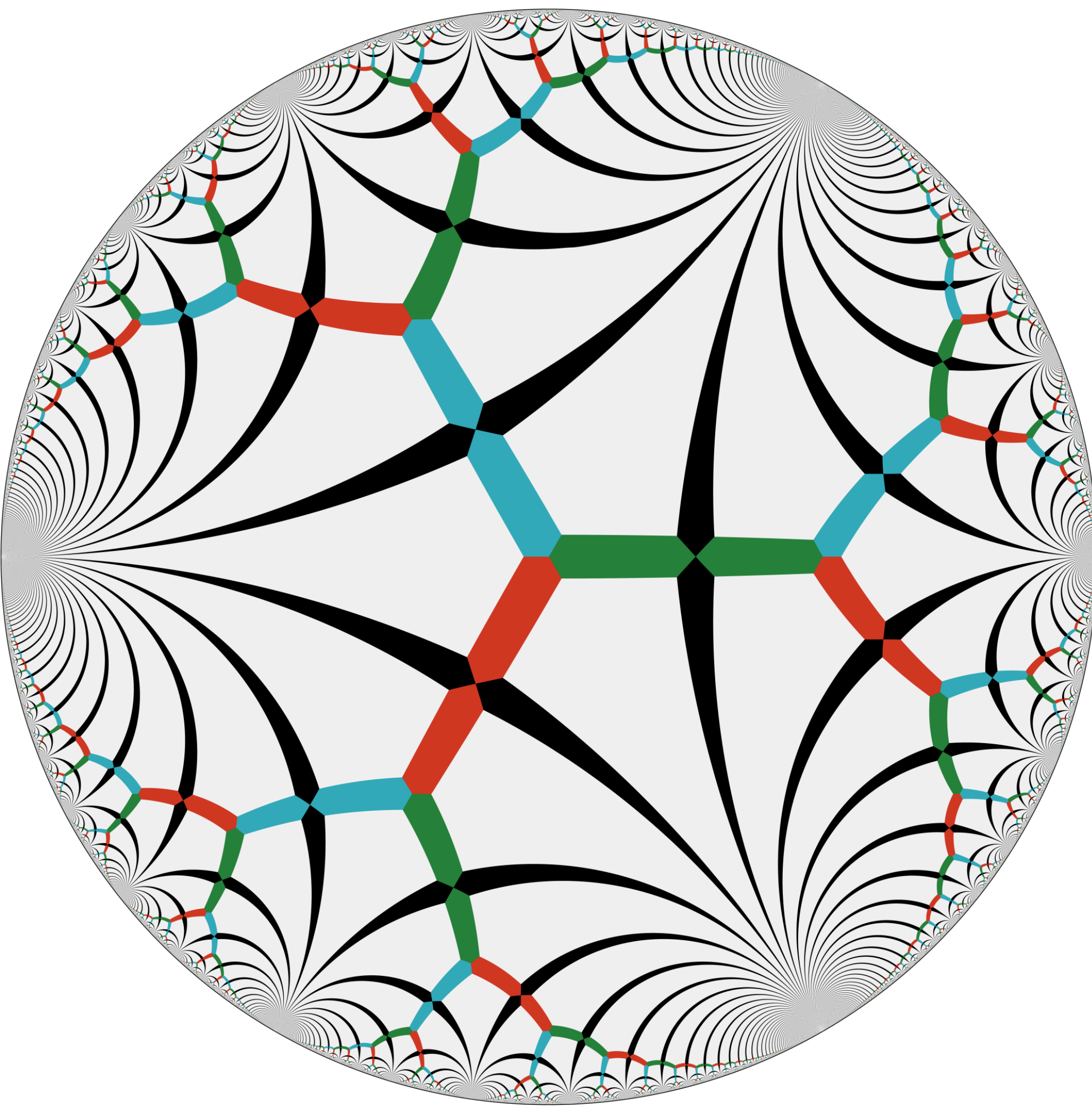}
\caption{The coloured Farey tree. The colours round the boundary of each complementary region alternate. The picture is a conjugated version of the one in Figure~\ref{fig:farey}, arranged so as to highlight the three-fold symmetry between $(a,b,ab)$. Image courtesy of Roice Nelson.}\label{fig:colouredtree}
\end{figure}

Let $e_0$ be the edge of $\T$ with adjacent regions labelled by $(a,b)$ and let $ q^+(e_0)$ and $ q^-(e_0)$ denote the vertices at the two ends of $e_0$, chosen so that the neighbouring regions are $(a,b, ab)$ and $(a,b, ab^{-1})$ respectively. 
Removing either of these two vertices disconnects $\T$. We deal first with 
the subtree  $\T^+$  consisting of the connected component of $\T \setminus \{q^-(e_0)\}$ which contains $q^+(e_0)$. Note that the regions adjacent to all edges of $\T^+$ correspond to non-negative fractions.  

Let $e$ be a given edge of $\T^+$ and let $q^+(e)$ denote the vertex of $e$ furthest from $q^-(e_0)$.    Let $\g = \g(e)$ be the  unique shortest edge path joining $q^+(e)$ to $q^-(e_0)$, hence including both $e$ and $e_0$. The \emph{coloured level} of $e$, denoted $col.lev(e)$,  is the number of edges $e'$  including $e$ itself in $\g(e)$  with $col(e') =col(e)$. 
Note that $\g(e)$ necessarily includes $e_0$, and, provided $ e \neq e_0$, one or other of the two edges emanating from $q^+(e_0)$ other than $e_0$.  Thus $col.lev(e) = 1$ for all three edges meeting  $q^+(e_0)$ while for all other edges of $\T^+$ we have $col.lev(e) > 1$.  

Now suppose that $e$ is the edge of $\T^+$ whose neighbouring regions are labelled by the given generator pair $(u,v)$.
The proof will be by induction on  $col.lev(e)$.

Suppose first $col.lev(e)=1$. If $e = e_0$ the result is clearly true, since the pair $(a,b)$  is palindromic with respect to itself.
The other two edges emanating from $q^+(e_0)$ have neighbouring regions corresponding to the base pairs $(a,ab)$ and $(ab,b)$, each of which pair is palindromic  with respect to itself, proving the claim.

Suppose the result is proved for all edges of coloured level $k \geq 1$. Let $e$ be an edge  whose adjacent generators are of type 
$(\eta, \eta')$. Suppose that $col(e) =c$ and let  $e'$ be the next  edge of $\g$   with $col(e') = c$ along the path $\g(e)$ from $q^+(e)$ to $q^-(e_0)$.  (Note that such $e'$ always exists  since $k+1\geq 2$.)
By the induction hypothesis there is a pair of generators  $(u,v)$ adjacent to $e'$ which is palindromic with respect the same basic generator pair  $(\eta, \eta')$.

Let $q^+(e')$ be the vertex of $e'$ closest to $e$, so that 
the subpath path $\g'$ of $ \g$ from $q^+(e')$ to $q^-(e)$ contains no other edges of  colour $c$, where $q^-(e)$ is the vertex of $e$ other than $q^+(e)$. Since there cannot be two adjacent edges of the same colour, the edges of $\g'$ must alternate between the two other colours.  This implies (see Figure~\ref{fig:colouredtree}) that $\g'$ forms part of the boundary of a complementary region $R$ of $\T^+$. Moreover the third edge at each vertex along $\dd R$ (that is, the one which is not contained in $\dd R$),  is coloured $c$.

Denote the generator associated to $R$ by $w$. Since the regions adjacent to  $ R$ and not in  $R$ include the one labelled $v$, the  labels of the regions around $\dd R$ can be written, in order,  in the form  $\ldots,w^{-2}v,  w^{-1}v, v, wv, w^2v, \ldots $.  Since $u$ is adjacent to $v$ in this list, 
either $ u = w^{-1}v$ or $ u = wv$. Since $e$ is coloured $c$ it points out of $\dd R$ so that the regions adjacent to $e$ also appear in this list so are of the form $(w^nv, w^{n+1}v)$ for some $n $.

Suppose $ u = w^{-1}v$.   Then  $(w^nv, w^{n+1}v) = ((vu^{-1})^nv, (vu^{-1})^{n+1}v)$. If $n \geq  0$ then this pair is clearly  palindromic with respect to $(u,v)$.
Since $(u,v)$ is palindromic with respect to $(\eta,\eta')$, it follows that so is $(w^nv, w^{n+1}v)$. If $n<0$ then noting that $(vu^{-1})^nv = (uv^{-1})^{-n}v$ we see that  $(w^nv, w^{n+1}v)$ is again palindromic in $(u,v)$ and hence with respect to $(\eta,\eta')$.
The argument in case $ u = wv$ is similar.

By the same  argument for the tree $ \T^-$ consisting of the connected component of $\T \setminus \{q^+(e_0)\}$ which contains $q^-(e_0)$ we arrive at the statement that the generators associated to each edge  of $ \T^-$ can be written in a form which is palindromic with respect to one of the three generator pairs associated to the edges emanating from $q^-(e_0)$, that is,
$(a, b^{-1})$, $(a, ab^{-1})$ or   $(ab^{-1}, b^{-1})$. The first pair is obviously palindromic with respect to $(a, b^{-1})$. 
Noting that   $ab^{-1} = a (b^{-1}a^{-1})a$ which is palindromic with respect to $(a,ab)$, the result follows.

Now we prove the existence part of the first claim. Suppose that $u \in \P$ is of type $\w \in \EE$ and that $\w' \neq \w$. Choose a generator $v$ of type $\w'$ so that $(u,v)$ is a generator pair. By the above there is a conjugate pair $(u',v')$ palindromic with respect to $(\w,\w')$ and $u'$ is a generator as required. 

To see that $u'$ is unique, 
suppose that cyclically shortest  primitive elements $u$ and $u'$ are in the same extended conjugacy class and are both palindromic with respect to the same pair of generators, which we may as well take to be $\{0/1,1/0\}$. Notice that $u$ necessarily has odd length, for otherwise the exponents of $a$ and $b$ are both even. 

Let $u = e_{r}  \ldots e_{1} f   e_{1} \ldots e_{r}$ and  suppose that $f' = e_{k}$ is the centre point about which $u'$ is palindromic  for some $1 \leq k \leq r$.   Then 
$\ldots uu \ldots $ is periodic with minimal period of length $2r+1$ and contains the subword $$  e_{r}  \ldots e_{1} f   e_{1} \ldots e_{{k-1}}f' e_{{k-1}} \ldots e_{1}  f e_{1}\ldots e_{r}$$    so after $f   e_{1} \ldots e_{{k-1}}f' e_{{k-1}} \ldots e_{1}$ the sequence repeats. Since this subword has length $2k<2r+1$ this contradiction proves the result.   

The claimed uniqueness of generator pairs follows immediately.
\end{proof}

 \subsection{Fibonacci growth}

Since all words in an extended conjugacy class have the same length, and since representative of the extended conjugacy class corresponding to $p/q \in \hat \QQ$ can found by concatenation starting from the initial generators $(a, b)$, it follows that $||w||_{(a,b)}= p+q$  for all $w \in [p/q]$.  
 This leads to the following definition from~\cite{bow_mar}:

 \begin{definition} \label{fibonaccidefn}   A representation $\rho \co F_2 \to \SL $ has \emph{Fibonacci growth}  if there exists  $c>0$ such that for all cyclically reduced words $w \in \P$ we have $  \log^+|\Tr \rho(w)| < c ||w||_{(a,b)}$ and moreover $  \log^+|\Tr \rho(w)| >  ||w||_{(a,b)}/c$
 for all but finitely many cyclically reduced $w \in \P$ where $\log^+x = \max \{0, \log |x| \}$. \end{definition}
Notice that although the definition is made relative to a fixed pair of generators for $F_2$, it is in fact independent of this choice.

The following  result is fundamental:
\begin{proposition}[\cite{bow_mar} Proof of Theorem 2, \cite{tan_gen} Theorem 3.3] \label{fibonacci} If $\rho \co F_2 \to \SL$ satisfies the $BQ$-condition  then $\rho$ has Fibonacci growth. \end{proposition}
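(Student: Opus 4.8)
\emph{Approach.} The two inequalities in Definition~\ref{fibonaccidefn} are of quite different character, and I would establish them separately. The upper bound $\log^+|\Tr\rho(w)| < c||w||$ is soft and uses nothing beyond submultiplicativity of the operator norm: setting $M = \max\{\,||\rho(x)|| : x \in \{a^{\pm 1},b^{\pm 1}\}\,\} \geq 1$, one has $||\rho(w)|| \leq M^{||w||}$ for every reduced word $w$ (here $||w|| = ||w||_{(a,b)}$ is the number of letters), and since $|\Tr N| \leq 2||N||$ for $N \in \SL$ this gives $\log^+|\Tr\rho(w)| \leq ||w||\log M + \log 2$. Any $c > \log M + \log 2$ then yields the first inequality for all nontrivial $w \in \P$, with no appeal to the $BQ$-condition. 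All the content is in the lower bound, which is where $BQ$ enters.

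\emph{The mechanism for the lower bound.} I would drive the lower bound by the neighbour (Markov) trace identity together with the combinatorial structure recalled in Section~\ref{Bowditchbackground}. If a generator pair $(u,v)$ labels a Farey edge, the two triangles on that edge have apex words $uv$ and $uv^{-1}$, and $\Tr\rho(uv) + \Tr\rho(uv^{-1}) = \Tr\rho(u)\Tr\rho(v)$. Writing $x,y$ for the two endpoint traces and $z,z'$ for the two apex traces, this is $z+z' = xy$. The orientation analysis of Section~\ref{Bowditchbackground} produces, under $BQ$, a \emph{finite} connected subtree $T_0 \subset \T$ containing in its interior the (finitely many, by $BQ$) vertices with $|\Tr| \leq 2$, and such that across every edge outside $T_0$ the outward apex dominates, $|z| \geq |z'|$. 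For such an edge $|xy| = |z+z'| \leq |z| + |z'| \leq 2|z|$, giving the clean estimate $\log|z| \geq \log|x| + \log|y| - \log 2$. Note this uses only the orientation $|z|\geq|z'|$, not any separate bound on the subtracted term.

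\emph{The induction.} On vertices outside $T_0$ define $\eta(w) = \log|\Tr\rho(w)| - \log 2$. If an apex vertex $w$ has Farey parents $u,v$ (the endpoints of its edge, which lie nearer $T_0$, satisfy $||w|| = ||u|| + ||v||$ by Farey addition, and have $|\Tr|>2$ by the choice of $T_0$, so $\eta(u),\eta(v)>0$), the displayed estimate rewrites as $\eta(w) \geq \eta(u) + \eta(v)$. Thus $\eta$ is super-additive along the branches hanging off $T_0$ in exactly the manner that $||\cdot||$ is additive. Setting $\epsilon_0 = \min\{\,\eta(w)/||w|| : w \text{ a frontier vertex of } T_0\,\} > 0$ (a minimum of finitely many strictly positive numbers), a straightforward induction outward on $||w||$ gives $\eta(w) \geq \epsilon_0||w||$, since a parent is always either a frontier vertex (covered by the definition of $\epsilon_0$) or an outside vertex of smaller length (covered by the inductive hypothesis), never a strict interior point of $T_0$. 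Hence $\log^+|\Tr\rho(w)| \geq \eta(w) \geq \epsilon_0||w||$ for every $w$ whose Farey vertex lies outside the finite set $T_0$, i.e.\ for all but finitely many $w \in \P$, and $c = 1/\epsilon_0$ completes the lower bound.

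\emph{Main obstacle.} The one genuinely hard ingredient is the production of the finite attracting subtree $T_0$ together with the outward-domination statement $|z| \geq |z'|$ holding off $T_0$: the $BQ$-condition asserts only that $\{g : |\Tr\rho(g)| \leq 2\}$ is finite, and gives no spectral gap, so it is Bowditch's analysis of the edge-orientation on $\T$ (Section~\ref{Bowditchbackground}) that upgrades this finiteness into the uniform domination needed to launch the super-additive induction. Once $T_0$ and its orientation are in hand, the passage $z+z'=xy \Rightarrow \log|z| \geq \log|x|+\log|y|-\log 2$ and its comparison with Farey-additive word length are routine.
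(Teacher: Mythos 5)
Your upper bound is fine, and your ``generic'' lower-bound mechanism --- the edge relation $z+z'=xy$ combined with the sink-tree orientation to get $|z|\geq|z'|$, hence the super-additive inequality $\eta(w)\geq\eta(u)+\eta(v)$ against Farey-additive word length --- is correct and is exactly the first half of the Bowditch/Tan--Wong--Zhang argument that this paper cites (it is quoted here as the first case of Lemma~\ref{fibonacciwake}, i.e.\ TWZ Lemma 3.17). However, there is a genuine gap at the step where you assert that the parents of any vertex outside $T_0$ ``have $|\Tr|>2$ by the choice of $T_0$.'' The $BQ$-condition does \emph{not} exclude primitive classes with $0<|\Tr\rho(u)|\leq 2$: it only excludes traces in the real interval $[-2,2]$ and requires the set $\Omega_{\rho}(2)$ to be finite, so a class $\bu$ with, say, trace $1.5i$ is allowed. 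Such a class is a complementary region of $\T$ with \emph{infinite} boundary, and it is a Farey parent of the infinitely many classes $\bu^{n}\bv$, $n\in\ZZ$, running along $\partial\bu$. Consequently no finite subtree $T_0$ can ``contain $\bu$ in its interior'' in the sense you need: for every finite $T_0$, infinitely many vertices outside $T_0$ have $\bu$ as a parent, and for these $\eta(\bu)\leq 0$, so your super-additive induction produces nothing --- not just for those children but for all of their descendants, an infinite portion of the tree rather than finitely many exceptions.

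The missing ingredient is the second case of the cited proof: growth along the boundary of a region $\bu\in\Omega_{\rho}(2)$. Writing $x=\phi(\bu)$ and $y_n$ for the traces of the consecutive regions around $\partial\bu$, the edge relation gives the recurrence $y_{n+1}=xy_n-y_{n-1}$, with characteristic roots $\lambda^{\pm1}$ satisfying $|\lambda|\neq1$ precisely because $x\notin[-2,2]$ (first $BQ$ condition); the degenerate solutions $y_n=A\lambda^{\pm n}$, which decay in one direction, occur only when $x=\pm\sqrt{\mu}$, and this is ruled out by the second $BQ$ condition since it would create infinitely many classes with $|\Tr|\leq 2$ (this is the content of the discussion preceding Lemma~\ref{finiteboundary}). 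Hence $|y_n|$ grows exponentially in both directions outside a finite segment, with constants depending on $\bu$; finiteness of $\Omega_{\rho}(2)$ then makes these constants uniform. This is TWZ Lemma 3.19, quoted in this paper as the second case of Lemma~\ref{fibonacciwake}, and it must be interleaved with your super-additive step to cover the wakes of edges adjacent to a region of $\Omega_{\rho}(2)$; your outline covers only wakes of edges neither of whose adjacent regions lies in $\Omega_{\rho}(2)$. (Note also that the paper itself does not prove Proposition~\ref{fibonacci} but defers to the references; the comparison above is with that cited proof, whose key two-case structure the paper reproduces in Section~\ref{Bowditchbackground}.)
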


\section{More on the Bowditch condition}\label{Bowditchbackground}

In this section we explain some further background to the BQ-condition. For more detail see~\cite{bow_mar} and~\cite{tan_gen}, and for a quick summary~\cite{sty}.
  As above, $\overline \P$ is identified $\Qhat$ and hence with the set $\Omega$ of complementary regions of the Farey tree $\T$.  We denote the region associated to a generator $u$ by $\bf u$, thus $\bf u' = \bf u$ for all $u' \sim u$.
 For a given representation $\rho \co F_2 \to \SL$,  note that $\Tr [U,V]$ and hence $\mu =  \Tr [A,B]+2$ is independent of the choice of generators of $F_2$, where as usual $U = \Tr \rho(u)$ and so on. 
 Since $  \Tr U$ is constant on extended equivalence classes of generators, for $\bu \in \Omega$ we can define $\phi(\bu) =  \phi_{\rho}(\bu)=  \Tr U$ for any $ u \in \bu$.  
 For notational convenience we will sometimes write $\hat \bu$   
 in place of $\phi(\bu)$.
 
 For matrices $X,Y \in \SL$  set $x = \Tr X, y = \Tr Y, z = \Tr XY$. Recall the trace relations:
\begin{equation}\label{eqn:inverse} \Tr XY^{-1} = xy-z \end{equation} and 
\begin{equation} \label{eqn:commreln}  x^2+y^2+z^2 = xyz + \Tr {[X,Y]} +2.
\end{equation}
Setting $\mu =  \Tr {[X,Y]} + 2$, this last equation takes the form 
$$x^2+y^2+z^2 - xyz = \mu.$$

As is well known and can be proven by applying the above trace relations inductively, if  $\bu,\bv,\bw$ is a triple of regions round a vertex  of $\T$, then $\hat \bu,\hat \bv,\hat \bw$ satisfy~\eqref{eqn:commreln}. (In particular, $ \Tr {[U,V]} $ is independent of  choice of generators.)
Likewise if $e$ is an edge of 
$\T$ with adjacent regions $\bu,\bv$ and if $\bw, \bz$ are the third regions at either end of $e$, then $\hat \bu,\hat \bv,\hat \bw, \hat \bz$ satisfy~\eqref{eqn:inverse}.  (A map  $\phi: \Omega \to \CC$ with this property is called a \emph{Markoff map}  in~\cite{bow_mar}.)

Given $\rho \co F_2 \to \SL$, define $\T_{\rho}$ to be the tree whose complementary regions are labelled by the function $\phi = \phi_{\rho}$.
Following Bowditch~\cite{bow_mar}, we orient the edges of $\T_{\rho}$ as follows. Suppose that labels of the regions adjacent to some edge $e$ are $\hat \bu,\hat \bv$ and the  labels of the two  remaining regions at the two end vertices are $\hat \bw,\hat \bz$ so that $\hat \bz = \hat \bu \hat \bv-\hat \bw$.
Orient $e$ by putting an arrow from $\hat \bz$ to $\hat \bw$ whenever $|\hat \bz| > |\hat \bw|$ and vice versa. If both moduli are equal,  make either choice; if the inequality is strict, say that the edge is \emph{oriented decisively}.

For any $m \geq 0$ and $\rho \co F_2 \to \SL$ define $\Omega_{\rho}(m) = \{ \bu \in \Omega | |\phi_{\rho}(\bu)| \leq m\}$.

Now we collect up some important results from~\cite{tan_gen} which generalise those of~\cite{bow_mar}. 

\begin{lemma}[{\cite[Lemma 3.7]{tan_gen}}]\label{forkvertex} 
Suppose $\bu,\bv,\bw \in \Omega$ meet at a vertex $\q$  of $\T$ with the arrows on both the edges adjacent to $\bu$ pointing away from $\q$. Then either $|\phi(\bu)| \leq 2$ or $\phi(\bv) = \phi(\bw) = 0$.  In particular, if $\rho \in \B$ then $|\phi(\bu)| \leq 2$. \end{lemma}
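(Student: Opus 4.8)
The plan is to turn the two orientation hypotheses into a pair of algebraic inequalities among the three traces meeting at $\q$ and then combine them. Write $x = \phi(\bu)$, $y = \phi(\bv)$, $z = \phi(\bw)$. The two edges of $\T$ adjacent to $\bu$ at $\q$ separate the pairs $(\bu,\bv)$ and $(\bu,\bw)$; at the far end of each of these edges the ``remaining'' region carries label $xy-z$, respectively $xz-y$, by the inverse trace relation~\eqref{eqn:inverse}. Since an edge is oriented from the endpoint whose opposite label has the larger modulus toward the one with the smaller, the assumption that both arrows point away from $\q$ translates exactly into
\[
 |xy - z| \le |z|, \qquad |xz - y| \le |y|,
\]
the inequalities being non-strict so as to allow the tie-breaking convention in the definition of the orientation.

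Next I would square and expand both inequalities using $|a-b|^2 = |a|^2 - 2\Re(\bar a b) + |b|^2$. After the $|z|^2$ (respectively $|y|^2$) terms cancel, the first inequality becomes $|x|^2|y|^2 \le 2\Re(\bar z\, x y)$ and the second $|x|^2|z|^2 \le 2\Re(\bar y\, x z)$. Adding them, the key observation is that the cross terms collapse: since $\bar z y + \bar y z = 2\Re(\bar y z)$ is real, we have $\Re(\bar z\, xy) + \Re(\bar y\, xz) = \Re\big(x(\bar z y + \bar y z)\big) = 2\Re(x)\Re(\bar y z)$, so that
\[
 |x|^2\big(|y|^2 + |z|^2\big) \le 4\,\Re(x)\,\Re(\bar y z).
\]

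Finally I would estimate the right-hand side crudely. By $\Re(x)\le|x|$, $|\Re(\bar y z)| \le |y||z|$, and AM--GM $|y||z|\le \tfrac12(|y|^2+|z|^2)$, the right-hand side is at most $2|x|(|y|^2+|z|^2)$, giving $|x|^2(|y|^2+|z|^2) \le 2|x|(|y|^2+|z|^2)$. If $y$ and $z$ are not both zero then $|y|^2+|z|^2>0$ and I may divide to obtain $|x|\le 2$; otherwise $y=z=0$. This is precisely the claimed dichotomy, i.e.\ either $|\phi(\bu)|\le 2$ or $\phi(\bv)=\phi(\bw)=0$. The final ``in particular'' then follows immediately: if $\rho\in\B$ the $BQ$-condition forbids $\phi(\bv)=\Tr\rho(v)=0$ (as $0\in[-2,2]$), so the alternative $y=z=0$ cannot occur and we are left with $|\phi(\bu)|\le 2$.

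The computation is short once everything is set up, so I do not anticipate a deep obstacle. The one thing requiring care is the bookkeeping of the orientation convention: matching ``arrow points away from $\q$'' with the correct opposite labels $xy-z$ and $xz-y$ and with the correct direction of the modulus inequality. The only genuinely clever step is noticing that summing the two squared inequalities forces the cross terms into the real quantity $\Re(x)\Re(\bar y z)$, which is exactly what makes the closing triangle-inequality/AM--GM estimate go through.
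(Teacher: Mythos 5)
Your proof is correct. Note first that this paper does not actually prove the lemma itself: it is imported by citation from Tan--Wong--Zhang \cite{tan_gen} (their Lemma 3.7), so there is no in-paper argument to compare against, and a self-contained proof like yours is genuinely additional content. Your translation of the hypothesis into the two inequalities $|xy-z|\le |z|$ and $|xz-y|\le |y|$ is exactly right, including taking them non-strict to absorb the tie-breaking convention in the orientation, and the computation goes through: squaring, cancelling, and summing does collapse the cross terms to $4\Re(x)\Re(\bar y z)$ because $y\bar z+\bar y z$ is real, and the estimate $\Re(x)\Re(\bar y z)\le |x|\,|y|\,|z|\le \tfrac12 |x|(|y|^2+|z|^2)$ closes the argument, yielding the dichotomy and (since $0\in[-2,2]$) the ``in particular'' clause. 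For comparison, the argument in \cite{tan_gen} is shorter and avoids squaring altogether: from $|xy-z|\le|z|$ the triangle inequality gives $|x|\,|y|\le |xy-z|+|z|\le 2|z|$, and likewise $|x|\,|z|\le 2|y|$; multiplying gives $|x|^2|y|\,|z|\le 4|y|\,|z|$, so $|x|\le 2$ unless $yz=0$, and if say $y=0$ then the second hypothesis forces $xz=0$, so either $x=0$ (hence $|x|\le 2$) or $z=0$ (hence $y=z=0$). Your route trades that two-line estimate for a slightly longer but equally elementary computation; both rest on the same key step, namely converting the arrow condition into the correct modulus inequalities via the edge relation $\hat\bz=\hat\bu\hat\bv-\hat\bw$.
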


\begin{lemma}[{\cite[Lemma 3.11]{tan_gen} and following comment}] \label{infiniteray}
Suppose $\beta$ is an infinite ray consisting of a sequence of edges of $\T_{\rho}$ all of whose arrows point away from the initial vertex. Then $\beta$ meets at least one region $\bu \in \Omega$  with $|\phi( \bu)| < 2$. Furthermore, if the ray does not follow the boundary of a single region, it meets infinitely many  regions with this property.
\end{lemma}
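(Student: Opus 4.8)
The plan is to manufacture the small region at a vertex where Lemma~\ref{forkvertex} applies, and to dispose of the one configuration in which no such vertex occurs by a decay argument for the trace recurrence.

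Write $\beta$ as $v_0,e_1,v_1,e_2,\ldots$ with every arrow pointing in the direction of increasing index, so that at each $v_i$ ($i\ge1$) the edge $e_i$ points in and $e_{i+1}$ points out; let $f_i$ be the third edge at $v_i$. The key local observation is that the region $\gamma_i$ appearing as the tip of $e_i$ at $v_i$ is bounded by \emph{both} remaining edges $e_{i+1}$ and $f_i$, whatever way $\beta$ turns, and is adjacent to $e_{i+1}$, hence met by $\beta$. If $f_i$ points away from $v_i$, then both edges bounding $\gamma_i$ at $v_i$ point away from $v_i$, and Lemma~\ref{forkvertex} gives $|\phi(\gamma_i)|\le2$ or else $\phi$ vanishes on the other two regions meeting $v_i$ (which are also met by $\beta$, being adjacent to $e_i$); either way $\beta$ meets a region in $\Omega_{\rho}(2)$. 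So I may assume $\beta$ avoids all such regions, which forces every $f_i$ to point toward $v_i$.

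Next I would analyse the values along $\beta$. Whenever $\beta$ runs along the boundary of a single region $R$, the successive labels $x_n$ of the regions in the fan around $R$ satisfy $x_{n+1}=\phi(R)x_n-x_{n-1}$ by the edge relation~\eqref{eqn:inverse}, and the hypothesis that these $\partial R$-edges are all oriented forward is exactly the statement $|x_{n+1}|<|x_{n-1}|$ for every $n$. If $|\phi(R)|>2$ the two characteristic roots of this recurrence have reciprocal moduli which are necessarily unequal (were they equal, $\phi(R)$ would lie in $[-2,2]$), so the general solution is a sum of a geometrically growing and a geometrically decaying sequence; the monotonicity kills the growing part, whence $x_n\to0$ and $|x_n|<2$ for all large $n$. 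If instead $|\phi(R)|\le2$ then $R$ itself lies in $\Omega_{\rho}(2)$. This settles the alternative in which $\beta$ eventually follows the boundary of one region, and supplies the guaranteed single small region.

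It remains to treat a ray that switches the region it follows infinitely often, and to derive the refinement. The plan is to show that avoiding $\Omega_{\rho}(2)$ is incompatible with infinitely many such switches: the forward orientation forces the fan of the currently followed region to decay, a switch hands control to a neighbouring region whose label is again $>2$ in modulus, and I would track a monovariant — for instance the least of the three trace moduli at the triangle $v_i$ — and show that it must strictly descend across successive runs, so it cannot remain $\ge2$ forever. Consequently a fork vertex (some $f_i$ pointing outward) recurs infinitely often, and distinct, widely separated fork vertices supply distinct small regions, which is the ``infinitely many'' clause. I expect this descent across boundary-switches to be the main obstacle: pinning down the correct monovariant and controlling the complex three-term recurrence as $\beta$ changes which region it hugs. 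A secondary point is the passage from $|\phi|\le2$ (all that Lemma~\ref{forkvertex} yields) to the strict $|\phi|<2$ asserted here; in the decaying-fan case $|x_n|\to0$ gives this for free, so only the exact value $|\phi|=2$ at a fork or at a followed region $R$ need be excluded, using that the relevant orientations are decisive.
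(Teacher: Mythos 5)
The paper offers no proof of this lemma at all --- it is quoted from \cite{tan_gen} (Lemma 3.11 and the comment following it) --- so your attempt has to be judged on its own terms rather than against an internal argument.

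Your first two paragraphs are sound in outline: the identification of $\gamma_i$ as the region cut off by $e_{i+1}$ and $f_i$, the application of Lemma~\ref{forkvertex} when $f_i$ points away from $v_i$, and the characteristic--root analysis of the recurrence when the ray hugs a single region are all correct moves. The genuine gap is the third paragraph, and you have located it yourself: the case in which the ray switches regions infinitely often is not proved --- no monovariant is actually defined and no descent is established --- and this case is the real content of the lemma. Moreover the monovariant you suggest (the least of the three moduli at $v_i$) cannot work as described. Under the contradiction hypothesis that every region met by $\beta$ has $|\phi|\geq 2$, the forward orientation gives, writing $x_i,y_i$ for the values adjacent to $e_i$ and $t_i$ for the value opposite $e_i$ at its tail, the chain $|t_{i+1}|\leq\max(|x_i|,|y_i|)\leq |x_iy_i|/2\leq |t_i|$; hence $|t_i|$ decreases to some $T\geq 2$ and $\min(|x_i|,|y_i|)\to 2$. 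When $T>2$ one can check this forces the ray eventually to follow a single region, reducing to your second paragraph; but when $T=2$ \emph{every} quantity in sight tends to $2$ from above, so a modulus-based monovariant has nothing left to decrease, and the contradiction must instead come from the vertex relation $x^2+y^2+z^2-xyz=\mu$ together with the standing exclusion of the reducible case $\mu=4$ --- an ingredient your outline never invokes.

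The secondary issue (strictness $|\phi|<2$ versus $\leq 2$) is also not fixable the way you propose. ``Using that the relevant orientations are decisive'' is not available: at a tie the arrow is assigned by an arbitrary choice, so decisiveness cannot be assumed. The correct mechanism is again $\mu\neq 4$: tracing the equality case through the proof of Lemma~\ref{forkvertex} shows that $|\phi(\gamma_i)|=2$ at a fork forces $\phi(\gamma_i)=\pm 2$ and then $\mu=4$; likewise, if the followed region has $\phi(R)=\pm 2$, the recurrence solutions $(A+Bn)(\pm 1)^n$ with non-increasing moduli force $B=0$ and then the vertex relation gives $\mu=4$. Relatedly, your dichotomy in the boundary-following case should be $\phi(R)\in[-2,2]$ versus $\phi(R)\notin[-2,2]$, not $|\phi(R)|\leq 2$ versus $|\phi(R)|>2$: for non-real $\phi(R)$ of modulus at most $2$ the characteristic roots still have distinct moduli, so the decay argument applies and rescues that case. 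This strictness point is not cosmetic: without it your very first reduction (``every $f_i$ points toward $v_i$'') is unjustified, since under the hypothesis that $\beta$ merely avoids regions with $|\phi|<2$, a fork could still occur at a region with $|\phi|$ exactly $2$.
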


\begin{theorem}[{\cite[Theorem 3.1(2)]{tan_gen}}]\label{connected} 
For any $m \ge 2$, the set  $\Omega_{\rho}(m)$  is connected. Moreover $|\Omega_{\rho}(m)|<\infty$ if and only if $\rho \in \B$. 
\end{theorem}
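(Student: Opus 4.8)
The plan is to prove the two assertions separately, treating connectedness first and then the finiteness dichotomy, using the edge orientation of $\T_{\rho}$ together with Lemmas~\ref{forkvertex} and~\ref{infiniteray} as the main engine. The local input throughout is the \emph{boundary recurrence}: if $\bz$ is any region and $\bv_{n}$ are its successive neighbours around $\partial\bz$, then the edge relation~\eqref{eqn:inverse} gives $\hat{\bv}_{n+1} = \hat{\bz}\,\hat{\bv}_{n} - \hat{\bv}_{n-1}$, so that the behaviour of $\phi$ on the neighbours of $\bz$ is governed by a linear recurrence whose type is dictated by $\hat{\bz}$.

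For connectedness, I would argue that no region of large trace can separate two small-trace regions. Fix $m \ge 2$ and two regions $\bu, \bu' \in \Omega_{\rho}(m)$, and consider the embedded edge-path of $\T$ dual to the chain of regions joining them. When $|\hat{\bz}| > 2$ the boundary recurrence is hyperbolic, so $|\hat{\bv}_{n}| \to \infty$ as $n \to \pm\infty$ and the set of neighbours of $\bz$ lying in $\Omega_{\rho}(m)$ is finite; one then shows this set is a single arc of $\partial\bz$, so that $\bz$ cannot disconnect $\Omega_{\rho}(m)$, and feeds this into a planarity argument on the tree $\T$ to obtain global connectedness. The orientation organises the descent: sources of the arrow flow have trace modulus at most $2$ by Lemma~\ref{forkvertex}, while by Lemma~\ref{infiniteray} every outward ray of arrows meets a region of trace modulus $<2$, which prevents the small-trace regions from escaping to infinity in distinct directions.

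For the finiteness statement, the direction $\rho \in \B \Rightarrow |\Omega_{\rho}(m)| < \infty$ is the clean one: by Proposition~\ref{fibonacci}, $\rho \in \B$ has Fibonacci growth, so $\log^{+}|\Tr\rho(w)| > \|w\|_{(a,b)}/c$ for all but finitely many cyclically reduced $w \in \P$. Hence $|\Tr\rho(w)| \le m$ forces $\|w\|_{(a,b)}$ to be bounded outside a finite exceptional set, and since only finitely many extended classes have bounded word length, $\Omega_{\rho}(m)$ is finite. Conversely, assume $|\Omega_{\rho}(m)| < \infty$ for some $m \ge 2$. Then $\Omega_{\rho}(2) \subseteq \Omega_{\rho}(m)$ is finite, which is the finiteness clause of the $BQ$-condition. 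To obtain the remaining clause, that no $\phi(\bu)$ lies in the real segment $[-2,2]$, I would argue by contradiction: if $\phi(\bu) = 2\cos\theta \in [-2,2]$ then the boundary recurrence around $\bu$ is elliptic or parabolic and its solutions $\hat{\bv}_{n}$ are \emph{bounded} in $n$; since the regions $\bv_{n}$ are pairwise distinct, this yields infinitely many regions of uniformly bounded trace modulus, forcing $\Omega_{\rho}(M)$ to be infinite for some $M \ge 2$ and contradicting finiteness.

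The main obstacle is the connectedness argument, and specifically the claim that the neighbours of a large-trace region lying in $\Omega_{\rho}(m)$ form a single arc of $\partial\bz$. Over $\RR$ this follows from convexity of $n \mapsto \log|\hat{\bv}_{n}|$, but for genuinely complex traces $\hat{\bv}_{n} = A\lambda^{n} + B\lambda^{-n}$ with $|\lambda| > 1$ no such convexity is available, and one must instead exploit the hypothesis $m \ge 2$ and the precise growth of $|A\lambda^{n} + B\lambda^{-n}|$ to rule out the sublevel set breaking into several arcs. This is the technical heart of the generalisation in~\cite{tan_gen}, and is where I would expect to spend most of the effort.
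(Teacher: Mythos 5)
Your proposal attempts considerably more than the paper does: the paper's entire proof consists of quoting connectedness from \cite{tan_gen} Theorem 3.1(2) and then disposing of the finiteness clause in one line, as a consequence of Lemma~\ref{infiniteray} together with finiteness of $\Omega_{\rho}(2)$ (the latter being part of the definition of $\B$). Your treatment of connectedness is in the same position as the paper's: you correctly identify the hard local statement (that the $\Omega_{\rho}(m)$-sublevel set along the boundary of a large-trace region is a single arc, for genuinely complex traces) and then defer it to \cite{tan_gen}, so nothing is proved but nothing is claimed falsely. Your proof that $\rho\in\B$ implies $|\Omega_{\rho}(m)|<\infty$ via Fibonacci growth (Proposition~\ref{fibonacci}) is correct, and is a legitimate alternative to the paper's route through Lemma~\ref{infiniteray}; both rest on results quoted from \cite{bow_mar,tan_gen}, so the two approaches are on an equal footing here.

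The genuine gap is in your converse direction, $|\Omega_{\rho}(m)|<\infty \Rightarrow \rho\in\B$, and it is twofold. First, your claim that for $\phi(\bu)\in[-2,2]$ the boundary recurrence has \emph{bounded} solutions is false in the parabolic case $\phi(\bu)=\pm 2$: there the solutions are $(\pm1)^{n}(A+Bn)$, which grow linearly whenever $B\neq 0$, so no infinite family of uniformly bounded traces is produced. This is not a repairable detail: a representation with $\Tr\rho(a)=2$ and otherwise generic traces (a Riley-type representation) has the neighbours of $[a]$ growing linearly and all other primitive traces growing faster, so $\Omega_{\rho}(m)$ is finite for \emph{every} $m$ although $\rho\notin\B$. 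Second, even in the elliptic case $\phi(\bu)=2\cos\theta$, $\theta\in(0,\pi)$, the bound you obtain on the neighbours is $M=|A|+|B|$, which depends on $\bu$ and in general exceeds $m$; infinitude of $\Omega_{\rho}(M)$ does not contradict your hypothesis, which concerns only the given $m$. Indeed one can arrange $|A|=100$, $|B|=1$, so that every neighbour of $\bu$ has trace modulus in $[99,101]$ and everything further out is larger still: then $\Omega_{\rho}(50)=\{\bu\}$ is finite and $\Omega_{\rho}(101)$ is infinite, so finiteness of $\Omega_{\rho}(m)$ is genuinely $m$-dependent and cannot by itself force the first clause of the $BQ$-condition. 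You should note that this problematic direction is exactly the one the paper's own one-line proof also fails to address, and it is never used later: what the rest of the paper needs (in Corollary~\ref{applyLongbending2} and Theorem~\ref{prop:BIBQimpliesPS}) is only connectedness together with the implication from $\B$ to finiteness, which you have established.
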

\begin{proof} The first statement is~\cite{tan_gen} Theorem 3.1(2). The statement on finiteness of $\Omega_{\rho}(m)$ follows  from Lemma~\ref{infiniteray} and finiteness of   $\Omega_{\rho}(2)$. \end{proof}

Let $\by_i, i \in \ZZ$ be the regions in order around the boundary $\dd \bu$ of a single region $\bu \in \Omega$. It is easy to see (see the proof of Proposition~\ref{uniquepalindromes})   that the values $\phi (\by_i)$ satisfy a simple  recurrence relation and hence   grow exponentially unless  $\phi(\bu)$ is in the exceptional set $E = [-2,2] \cup \{\pm \sqrt{\mu}\} \subset \CC$. If $\rho \in \B$ then 
by definition $\phi(\bu) \notin [-2,2]$, while if $\phi(\bu)= \pm \sqrt{\mu}$ the values approach zero in one direction round $\dd \bu$ (see \cite{tan_gen} Lemma 3.10) and hence $\rho \not\in \B$ since condition \eqref{eqn:B2} is not satisfied.
Thus we find: 

\begin{lemma}[{\cite[Lemma 3.20]{tan_gen}}] \label{finiteboundary} 
Suppose that $\rho \in \B$ and $\bu \in \Omega$ and consider the regions $\by_i, i \in \ZZ$ adjacent to $\bu$ in order round $\dd \bu$. Then away from a finite subset, the values $|\phi_{\rho}(\by_i)|$  are increasing and approach  infinity as $ i \to \infty$ in both directions. Moreover there exists a finite segment of $\partial \bu$ such that the edges adjacent to $\bu$ and not in this segment are directed towards this segment.
 \end{lemma}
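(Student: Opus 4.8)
The plan is to analyse the values $t_i := \phi_\rho(\by_i)$ through the linear recurrence they satisfy along $\partial \bu$, and then read off both the growth/monotonicity assertion and the orientation assertion. Writing $x = \phi(\bu)$, recall (as in the proof of Proposition~\ref{uniquepalindromes}, where the neighbours of a region labelled $u$ are $\ldots, u^{-1}v, v, uv, \ldots$) that consecutive regions $\by_{i-1}, \by_i, \by_{i+1}$ around $\partial\bu$ are arranged so that $\by_i$ is separated from $\bu$ by a single edge $e_i$ of $\partial\bu$, whose two opposite regions at the ends of $e_i$ are exactly $\by_{i-1}$ and $\by_{i+1}$. The Markoff edge relation coming from~\eqref{eqn:inverse} then gives $t_{i+1} = x\,t_i - t_{i-1}$, a constant-coefficient recurrence with characteristic equation $\lambda^2 - x\lambda + 1 = 0$ and roots $\lambda_\pm$ satisfying $\lambda_+\lambda_- = 1$ and $\lambda_+ + \lambda_- = x$.

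Next I would exploit $\rho \in \B$. Since $\phi(\bu) = x \notin [-2,2]$, the roots are distinct ($x \neq \pm 2$) and, crucially, of distinct modulus: if $|\lambda_+| = |\lambda_-|$, then $\lambda_+\lambda_- = 1$ forces $\lambda_- = \overline{\lambda_+}$ and hence $x = 2\,\Re\lambda_+ \in [-2,2]$, a contradiction. So we may take $|\lambda_+| > 1 > |\lambda_-|$ and write $t_i = A\lambda_+^i + B\lambda_-^i$. The key step is to show $A \neq 0$ and $B \neq 0$. If $A = 0$, then $|t_i| = |B|\,|\lambda_-|^i \to 0$ as $i \to +\infty$, so all but finitely many $\by_i$ lie in $\Omega_\rho(2)$; these are infinitely many distinct regions, contradicting the finiteness of $\Omega_\rho(2)$ guaranteed by Theorem~\ref{connected}. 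The case $B = 0$ is symmetric, considering $i \to -\infty$. Hence $A, B \neq 0$, and the dominant-term asymptotics give $|t_i| \to \infty$ as $i \to \pm\infty$; moreover $|t_{i+1}|/|t_i| \to |\lambda_+| > 1$ as $i \to +\infty$ and $|t_{i-1}|/|t_i| \to |\lambda_-|^{-1} > 1$ as $i \to -\infty$, so outside a finite window the moduli $|t_i|$ are strictly increasing in each direction. This proves the first assertion.

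For the orientation statement I would apply the definition of the edge arrows directly. The edge $e_i$ separates $\bu$ from $\by_i$ and has opposite regions $\by_{i-1}, \by_{i+1}$, so by definition its arrow points from the endpoint carrying the larger of $|t_{i-1}|, |t_{i+1}|$ towards the endpoint carrying the smaller. Let $\by_{-N}, \ldots, \by_N$ be a finite segment of $\partial\bu$ outside of which $|t_i|$ is strictly increasing in $|i|$. For $i > N$ we have $|t_{i+1}| > |t_{i-1}|$, so the arrow on $e_i$ points from its endpoint incident to $\by_{i+1}$ towards its endpoint incident to $\by_{i-1}$, i.e. in the direction of decreasing index, hence towards the segment; for $i < -N$ the inequality reverses and the arrow again points towards the segment. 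Thus every edge of $\partial\bu$ lying outside this finite segment is directed towards it, as claimed.

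The main obstacle is the middle step, establishing $A \neq 0$ and $B \neq 0$: this is precisely where the $BQ$-condition enters, through the finiteness of $\Omega_\rho(2)$, and it is what excludes the degenerate case $\phi(\bu) = \pm\sqrt{\mu}$ (equivalently $AB = 0$) in which the values would decay to $0$ along one direction of $\partial\bu$ and the conclusion would fail.
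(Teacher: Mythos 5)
Your proof is correct and follows essentially the same route as the paper: the paper's discussion preceding the lemma invokes the same recurrence $t_{i+1} = x\,t_i - t_{i-1}$ along $\partial \bu$, rules out $x \in [-2,2]$ by the definition of $\B$, and rules out the degenerate case $x = \pm\sqrt{\mu}$ (your case $AB = 0$) because values tending to zero along one direction of $\partial \bu$ would violate the finiteness clause of the BQ-condition. Your write-up simply makes explicit the solution of the recurrence, the equivalence of $AB=0$ with $x=\pm\sqrt{\mu}$, and the deduction of the orientation statement from the definition of the arrows --- details the paper delegates to the citation of \cite[Lemma 3.20]{tan_gen}.
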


Let  $\vec e$  be a directed edge. Its \emph{head} and \emph{tail} are the two ends of $e$, chosen so that the arrow on $\vec e$ points towards its head.  Note that $\T \setminus \{\vec e\}$ has two components.
We define  the  \emph{wake} of $\vec e$, denoted $\W(\vec e)$, to be the set of regions whose boundaries are contained in the component of $\T \setminus \{\vec e\}$ which contains the tail of $\vec e$, together with the two regions adjacent to $\vec e$. (This is the subset of $\Omega$ denoted $\Omega^{0-}(\vec e)$ in~\cite{bow_mar} and \cite{tan_gen}.)

For $\bu \in \W(\vec e)$ let $d(\bu)$ be the number of edges in the shortest path from $\bu$ to the head of $\vec e$.
Following~\cite{tan_gen} P.777,  define a function $F_{\vec e}$ on $\W(\vec e)$ as follows:
$F_{\vec e}(\bw) = 1$ if $\bw$  is adjacent to $\vec e$ and 
$F_{\vec e}(\bu) = F_{\vec e}(\bv)+ F_{\vec e}(\bw)$  otherwise,  where  $\bv,\bw$ are the two regions meeting $\bu$ and closer to $\vec e$ than $\bu$, that is, with $d(\bv) < d(\bu),d(\bw) < d(\bu)$.   

We need the following refinement of Proposition~\ref{fibonacci}:  
\begin{lemma} \label{Increasing3} \label{fibonacciwake} Suppose that $\rho \in \B$
and that $\Vec e$ is a directed  edge such at most one of the adjacent regions is in $\Omega(2)$, and suppose that the arrows on the edges of $\W(\vec e)$ are all directed towards $\Vec e$. Then there exist $c>0, n_0 \in \NN$, independent of $\Vec e$ (but depending on $\rho$),  so that $\log |\phi_{\rho}(\bu)| \geq c F_{\vec e}(\bu)$ for all but at most $n_0$ regions  $\bu \in \W(\vec e)$. \end{lemma}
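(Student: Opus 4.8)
The plan is to prove this as a wake-localised, uniform version of the Fibonacci growth of Proposition~\ref{fibonacci}. The engine will be a super-additivity estimate for $\log|\phi_{\rho}|$ along exactly the tree recursion that defines $F_{\vec e}$, seeded using the global finiteness of $\Omega_{\rho}(m)$ for $\rho\in\B$. First I would establish the basic trace estimate. Let $\bu\in\W(\vec e)$ with $F_{\vec e}(\bu)>2$, and let $\bv,\bw$ be its two neighbours closer to $\vec e$, so that $F_{\vec e}(\bu)=F_{\vec e}(\bv)+F_{\vec e}(\bw)$ and $\bu,\bv,\bw$ meet at a common vertex of $\T_{\rho}$. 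The edge separating $\bv$ and $\bw$ is a wake edge; let $\bx$ be the third region completing the triangle on its far side, so $\bx$ has smaller $F_{\vec e}$ and lies towards $\vec e$, and $\phi(\bu)+\phi(\bx)=\phi(\bv)\phi(\bw)$ by~\eqref{eqn:inverse}. Since this edge is directed towards $\vec e$, the orientation convention puts its head at the vertex whose third region is $\bx$, giving $|\phi(\bx)|\le|\phi(\bu)|$ and hence
\begin{equation*}
|\phi(\bu)|\ \ge\ \tfrac12\,|\phi(\bv)|\,|\phi(\bw)|.
\end{equation*}
Writing $\Psi(\bu)=\log\!\big(|\phi(\bu)|/2\big)$, this reads $\Psi(\bu)\ge\Psi(\bv)+\Psi(\bw)$: the function $\Psi$ is super-additive along the same recursion that defines $F_{\vec e}$.

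Next I would propagate a lower bound outward. Because $F_{\vec e}$ is additive and $\Psi$ super-additive over the identical combinatorial structure, unfolding the two recursions in parallel down to any finite set $\C$ of regions separating $\bu$ from $\vec e$ gives $\Psi(\bu)\ge\sum_{\by\in\C}m_{\by}\Psi(\by)$ with $\sum_{\by\in\C}m_{\by}F_{\vec e}(\by)=F_{\vec e}(\bu)$. Consequently, if $\Psi\ge cF_{\vec e}$ holds on $\C$, then it holds on every region lying beyond $\C$. It therefore suffices to exhibit, uniformly in $\vec e$, a finite separating cut on which $\Psi\ge cF_{\vec e}$.

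The seeding is where the BQ-condition enters, through Theorem~\ref{connected}: fix $T>2$, so that $N_T:=|\Omega_{\rho}(T)|<\infty$ depends on $\rho$ alone. Then the wake contains at most $N_T$ regions with $|\phi|\le T$, and every other region satisfies $\Psi\ge\log(T/2)>0$. By Lemma~\ref{finiteboundary} the low-trace regions cluster near $\vec e$: around any region the adjacent traces increase monotonically to infinity away from a single finite arc, so each low-trace region meets the high-trace zone across at most two frontier regions (the two ends of its arc of low-trace neighbours). Hence the low-trace set together with this outer frontier is a finite, connected set of regions of size at most $3N_T$; being anchored near $\vec e$, its members lie within combinatorial distance $O(N_T)$ of the regions adjacent to $\vec e$, so their $F_{\vec e}$-values are bounded by a constant $M=M(N_T)$, uniform in $\vec e$. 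On the frontier we then have $\Psi\ge\log(T/2)$ while $F_{\vec e}\le M$, so $\Psi\ge cF_{\vec e}$ there with $c:=\log(T/2)/M$. Propagating outward by super-additivity gives the bound for all regions beyond the frontier, and the finitely many regions inside the frontier are absorbed into the exceptional set, whose cardinality $n_0$ is bounded by $3N_T$. The hypothesis that at most one region adjacent to $\vec e$ lies in $\Omega(2)$ is used to control this near-$\vec e$ configuration, guaranteeing that the frontier genuinely separates the finite low-trace cluster from the infinite deep part of the wake.

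The main obstacle is precisely the uniformity of $c$ and $n_0$ in $\vec e$. Super-additivity by itself yields only a growth rate governed by the traces at the two regions adjacent to $\vec e$, and these may approach $2$ in modulus; a naive induction based at $\vec e$ therefore produces a rate depending on $\vec e$. The resolution, and the technical heart of the argument, is that near-threshold traces occur only inside the globally finite set $\Omega_{\rho}(T)$, so the seeding frontier, its $F_{\vec e}$-values, and the number of exceptional regions are all controlled by $N_T$ alone. Making rigorous that the low-trace cluster together with its frontier forms a uniformly bounded configuration — in particular bounding the $F_{\vec e}$-values on the frontier independently of $\vec e$ — is the step requiring the most care, and it is exactly where the finiteness half of Theorem~\ref{connected}, equivalently the BQ-condition, is indispensable.
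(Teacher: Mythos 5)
You have correctly isolated the engine of the result: the inequality $|\phi(\bu)|\ge\tfrac12|\phi(\bv)||\phi(\bw)|$ for a region $\bu$ of the wake with parents $\bv,\bw$, which follows from the edge relation~\eqref{eqn:inverse} together with the hypothesis that the edge between $\bv$ and $\bw$ is directed towards $\vec e$; this super-additivity of $\Psi=\log(|\phi|/2)$ is exactly the mechanism behind Lemmas 3.17 and 3.19 of~\cite{tan_gen}, which the paper's own proof simply cites, adding only the argument that the constants can be taken uniform in $\vec e$ because $\Omega(2)$ and $\Omega(3)$ are finite. In the case where \emph{neither} region adjacent to $\vec e$ lies in $\Omega(2)$ your scheme is essentially correct, and in fact no frontier is needed there: both base regions $\bx_0,\bx_1$ have $\Psi>0$, one unfolds every region fully down to them to get $\Psi(\bu)\ge c\,F_{\vec e}(\bu)$ with $c=\min\{\Psi(\bx_0),\Psi(\bx_1)\}$, and $c$ is bounded below uniformly in $\vec e$ because either $\Omega(3)\setminus\Omega(2)=\emptyset$ or the finitely many values in that set realise the infimum.

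The genuine gap is the case the hypothesis expressly allows, namely one adjacent region $\bx_0\in\Omega(2)$, and there your seeding collapses. The frontier you describe (``at most two frontier regions per low-trace region'', all with $F_{\vec e}\le M(N_T)$) does not exist: $\bx_0$ has infinitely many high-trace neighbours $\bz_1,\bz_2,\ldots$ in order along $\dd\bx_0$ inside the wake, the parents of $\bz_{i+1}$ are $\bz_i$ and $\bx_0$, and $F_{\vec e}(\bz_i)=i+1$ is unbounded. For a region $\bw$ in the branch of the wake hanging off $\dd\bx_0$ at the vertex between $\bz_i$ and $\bz_{i+1}$, every unfolding of $\Psi(\bw)$ either stops at the leaves $\bz_i,\bz_{i+1}$, in which case the seed $\Psi\ge\log(T/2)$ yields only $\Psi(\bw)\ge\frac{\log(T/2)}{i+2}F_{\vec e}(\bw)$, a constant degenerating as $i\to\infty$, or else continues past them and picks up the negative quantity $\Psi(\bx_0)$ with multiplicity comparable to $F_{\vec e}(\bw)$. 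So no cut of uniformly bounded $F_{\vec e}$ separates these branches from $\vec e$, and your propagation step cannot start. What is actually needed is that $\log|\phi(\bz_i)|$ grows at least linearly in $i$ at a rate depending only on $\bx_0$ (hence, $\Omega(2)$ being finite, only on $\rho$), and that comes from analysing the recurrence $\phi(\bz_{i+1})=\phi(\bx_0)\phi(\bz_i)-\phi(\bz_{i-1})$ along $\dd\bx_0$; this is precisely the content of Lemma 3.19 (via Lemma 3.18) of~\cite{tan_gen}, which is the part of the citation your argument was meant to replace. A secondary flaw: your claim that the low-trace regions are ``anchored near $\vec e$'' does not follow from Lemma~\ref{finiteboundary} or from the connectedness statement in Theorem~\ref{connected}, since two adjacent regions can have wildly different $F_{\vec e}$-values (boundaries of regions are infinite). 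The fact itself is true, but its proof is your own inequality read backwards: $|\phi(\bu)|\le T$ forces one parent to have modulus at most $\sqrt{2T}\le T$, so every low region is joined to $\vec e$ by a descending chain of low regions and therefore has depth at most $|\Omega(T)|$.
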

\begin{proof} This essentially Lemmas 3.17 and 3.19 of~\cite{tan_gen}, see also Corollary 3.6 of~\cite{bow_mar}.
We only need to see that the constants $c, n_0$ are independent of $\Vec e$. 
By Lemma 3.17 in~\cite{tan_gen}, 
if neither adjacent region to $\Vec e$ is in $\Omega(2)$, then it suffices to take $c = m - \log 2$ where $m = \min \{ \log 3, \inf \{ \log |\phi(\bu)|: \bu \notin \Omega(2) \}\}$ and $n_0 = 1$. Since the  sets  $ \Omega(3)$ and $\Omega(2)$ are finite for any $M$, and since either 
$ \Omega(3) \setminus \Omega(2) = \emptyset$ or  the infimum is a minimum, we have $ m - \log 2>0$  and the result follows. 

Equally, if one of the adjacent regions to $\Vec e$ is in $\Omega(2)$ then the constant $c$ in Lemma 3.19 and the number $n_0$ for which the inequality fails depends on the unique region $\bx_0 \in \Omega(2)$ adjacent to $\Vec e$. Since $\Omega(2)$ is finite once again these bounds are uniform independent of $\vec e$. \end{proof}

Finally, we will need the \emph{sink tree} defined in the course of the proof of Theorem 3.3 in~\cite{tan_gen} and explained in more detail in Theorem 2.7 of~\cite{sty}.
\begin{proposition}\label{sinktree} There is a finite connected non-empty subtree tree $T_F$  of $\T_{\rho}$ so that every  path of strictly decreasing arrows eventually lands on an edge of $T_F$.   
Moreover  $T_F$ contains all sink vertices and all edges abutting on any sink vertex. There is a constant $M_0 \geq 2$ so that  if regions $\bu,\bv$ are adjacent to an edge of $T_F$, then $|\Tr U|, |\Tr V| \leq M_0$. In particular, if $\bu$ is a region touching a sink vertex then $|\Tr U| \leq M_0$.
\end{proposition}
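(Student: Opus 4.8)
The plan is to construct the sink tree $T_F$ explicitly from Bowditch's edge orientation, relying on the attracting structure guaranteed by the $BQ$-condition. First I would recall the key input from Theorem~\ref{connected} and Lemma~\ref{finiteboundary}: since $\rho \in \B$, the set $\Omega_{\rho}(m)$ is finite for every $m \geq 2$, and around the boundary of each region the labels eventually increase in modulus. The natural candidate for $T_F$ is the union of all edges that are \emph{not} oriented away from the ``center'' of the tree — more precisely, I would define a \emph{sink vertex} as a vertex all three of whose adjacent edges point inward (toward it), and build $T_F$ as the smallest connected subtree containing all sink vertices together with every edge abutting such a vertex.

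The core step is to show that this $T_F$ is finite, nonempty, and attracting. Nonemptiness follows from a maximum-modulus argument: starting anywhere and following arrows of strictly increasing modulus cannot continue forever because the values cannot increase without bound along an infinite path that would eventually leave every $\Omega_{\rho}(m)$ — instead such a path must terminate, and Lemma~\ref{forkvertex} controls what happens at the terminal configuration. To see that $T_F$ is attracting, I would take any descending path (a path following arrows so that modulus strictly decreases) and argue by contraposition using Lemma~\ref{infiniteray}: an infinite ray whose arrows all point away from its initial vertex must meet infinitely many regions with $|\phi(\bu)| < 2$, contradicting the finiteness of $\Omega_{\rho}(2)$ from Theorem~\ref{connected}. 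Hence every descending path is finite and must terminate on an edge lying in the attracting set, which I identify with $T_F$. Finiteness of $T_F$ itself then follows because the sink vertices are forced to lie inside $\Omega_{\rho}(M_0)$ for a suitable bound, and that set is finite.

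For the bound $M_0$, I would invoke Lemma~\ref{forkvertex} directly: at a sink vertex $\q$ with adjacent regions $\bu,\bv,\bw$, each region sees both its incident edges pointing inward, so applying the lemma to each in turn gives $|\phi(\bu)|, |\phi(\bv)|, |\phi(\bw)| \leq 2$ (the alternative $\phi = 0$ on two of them is even stronger). Thus every region touching a sink vertex lies in $\Omega_{\rho}(2)$. To get the uniform bound for \emph{all} edges of $T_F$ — not just those at sink vertices — I would use that $T_F$ is obtained from the sink vertices by adding only finitely many connecting edges (finiteness having been established), so the labels on the finitely many regions meeting $T_F$ are bounded by some constant $M_0 \geq 2$ by construction. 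Taking $M_0$ to be the maximum of these finitely many trace moduli gives the claim.

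The main obstacle I anticipate is establishing finiteness of $T_F$ rigorously: it is not immediate that the sink vertices form a finite, let alone connected, set, nor that the minimal connecting subtree stays bounded. The delicate point is ruling out ``escaping'' behavior where sink vertices or connecting edges wander arbitrarily far out in $\T$. This is exactly where the strength of the $BQ$-condition, funneled through the finiteness of $\Omega_{\rho}(m)$ in Theorem~\ref{connected} together with the directed-boundary statement of Lemma~\ref{finiteboundary}, does the real work: outside a finite core every boundary edge of every region points inward, so no sink can occur outside this finite region, and the connecting paths between sinks are correspondingly trapped. Making the connectivity claim precise — that one genuinely obtains a single connected subtree rather than several components — requires checking that any two sink vertices are joined by a path through the attracting set, which I would deduce from the connectedness of $\Omega_{\rho}(m)$ in Theorem~\ref{connected}. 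Since the proposition explicitly attributes the construction to~\cite{tan_gen} and~\cite{sty}, I would present this as an assembly of those established facts rather than reprove each estimate from scratch.
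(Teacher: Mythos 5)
Your overall skeleton (build $T_F$ from the sink vertices, trap the sinks in a finite core using Lemma~\ref{finiteboundary} and finiteness of $\Omega_\rho(m)$, then take a connected hull) is a reasonable attempt at a self-contained construction, whereas the paper itself essentially cites the construction from p.~782 of~\cite{tan_gen}, Corollary 3.12 of~\cite{bow_mar} and Theorem 2.7 of~\cite{sty}, adding only the remark that $T_F$ can be enlarged to include all edges at sink vertices. However, your key quantitative step is wrong. You invoke Lemma~\ref{forkvertex} at a sink vertex, but that lemma's hypothesis is that the arrows on both edges adjacent to $\bu$ point \emph{away} from the vertex $\q$; at a sink vertex all three arrows point \emph{toward} $\q$, so for every one of the three regions the hypothesis fails and the lemma gives nothing. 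Moreover the conclusion you want is actually false: for the Fuchsian once-punctured torus representation with $\Tr A=\Tr B=\Tr AB=3$ (so $\Tr[A,B]=-2$), which satisfies the $BQ$-condition, the central vertex is a sink (each neighbouring value is $\hat\bu\hat\bv-\hat\bw=6>3$, so all arrows point inward), yet all three adjacent regions have trace modulus $3>2$. This is precisely why Proposition~\ref{sinktree} asserts a bound $M_0\geq 2$ depending on $\rho$ rather than the universal bound $2$.

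Because of this, the finiteness argument in your second and third paragraphs collapses: it rests on ``every region touching a sink lies in $\Omega_\rho(2)$,'' which is false. The repair is to run the construction the other way around, as in~\cite{tan_gen} and~\cite{bow_mar}: start from the finite set $\Omega_\rho(2)$ (nonempty by Lemma~\ref{infiniteray} applied to a maximal descending path, finite by Theorem~\ref{connected}), use Lemma~\ref{finiteboundary} to cut out, for each such region, the finite attracting segment of its boundary, and assemble these segments into a finite connected subtree using the connectivity statement of Theorem~\ref{connected}; the attracting property and the fact that all sinks lie inside then follow because every edge outside this tree points toward it, and the constant $M_0$ is read off as the maximum of the finitely many trace moduli of regions adjacent to the tree so built (enlarging the tree if necessary to capture all edges at sink vertices, as the paper notes). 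Also beware a secondary gap: even if sinks did lie in $\Omega_\rho(2)$, that alone would not bound their number, since a single region's boundary carries infinitely many vertices; it is Lemma~\ref{finiteboundary}, not membership in $\Omega_\rho(2)$, that confines sinks to finitely many candidate vertices.
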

\begin{proof} Most of the assertions are proved  on p. 782 of ~\cite{tan_gen}, see also Corollary 3.12 of~\cite{bow_mar}. The assertion that $T_F$ contains all sink vertices is included in Theorem 2.7 of~\cite{sty}; this follows since $T_F$ is connected and the arrow on each edge not in $T_F$ points towards $T_F$.
Finally, to include all edges adjacent to any sink vertex we note that $T_F$ can always be enlarged, possibly increasing the constant $M_0$, to a larger finite tree with the same properties and which strictly contains the original one, see the proofs of Theorem 3.2 of~\cite{tan_gen} and Theorem 3.16 of~\cite{bow_mar}.
\end{proof}

\section{The Bowditch condition implies  Bounded Intersection}\label{BQimpliesBIP} 

In this section we prove some  implications among the three basic concepts. The first two results are easy:
 \begin{proposition} \label{prop:PSImpliesbounded} If a representation $\rho \co F_2 \to \SL$   is primitive stable then it satisfies $BIP$. 
 \end{proposition}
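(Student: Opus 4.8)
The plan is to combine stability of quasigeodesics (the Morse lemma) with the reflection symmetry carried by a palindrome. Throughout I would fix the three special hyperelliptic axes $\E_1 = \E(A,B)$, $\E_2 = \E(A,AB)$, $\E_3 = \E(B,AB)$, denote by $\iota_i$ the $\pi$-rotation about $\E_i$, and set $h_0 = \max_i d(O,\E_i)$, a finite constant depending only on $\rho$. I would also record at the outset that $PS$ forces every $W = \rho(w)$, $w \in \P$, to be loxodromic: were $W$ elliptic or parabolic, $d(O, W^n O)$ would stay bounded or grow only like $2\log n$, contradicting the lower bound \eqref{qgeod1} applied to the subwords $w^n$ of the periodic word $\ldots ww\ldots$. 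In particular each $\Ax(W)$ is a genuine geodesic and each perpendicular crossing point with a hyperelliptic axis is an honest point of $\HHH$.

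First I would show that the basepoint lies uniformly close to every primitive axis. The broken geodesic $\br_{\rho}(w;(a,b))$ is a $(K,\epsilon)$-quasigeodesic line invariant under $W$, which translates it by one period; hence its two endpoints on $\dd\HHH$ are fixed by $W$ and so coincide with the endpoints of $\Ax(W)$. By the Morse lemma in the $\delta$-hyperbolic space $\HHH$ there is $R = R(K,\epsilon)$ with $\br_{\rho}(w;(a,b))$ contained in the $R$-neighbourhood of $\Ax(W)$; since $O$ is one of its vertices, $d(O,\Ax(W)) \le R$. Writing $O^*$ for the nearest-point projection of $O$ onto $\Ax(W)$, this gives $d(O,O^*) \le R$.

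The heart of the argument is to turn this into control of the crossing point itself. Let $w$ be palindromic with respect to the basic pair whose hyperelliptic axis is $\E$ with half-turn $\iota$, and let $P_w = \Ax(W) \cap \E$ (which exists and meets $\E$ perpendicularly, as recalled in Section~\ref{sec:BIP}). Since $w$ is a palindrome and $\iota$ inverts both basic generators, $\iota W \iota = W^{-1}$, so $\iota$ preserves $\Ax(W)$ but interchanges its two endpoints; thus $\iota$ acts on the geodesic $\Ax(W)$ as the reflection fixing its unique point of $\E$, namely $P_w$. Because nearest-point projection to a geodesic in $\HHH$ is $1$-Lipschitz and commutes with isometries preserving that geodesic, $\iota(O^*)$ is the projection of $\iota(O)$, so $d(O^*, \iota(O^*)) \le d(O, \iota(O)) = 2\, d(O,\E) \le 2 h_0$. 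As $\iota$ reflects $\Ax(W)$ in $P_w$, the point $P_w$ is the midpoint of $[O^*, \iota(O^*)]$, whence $d(O^*, P_w) = \tfrac{1}{2} d(O^*,\iota(O^*)) \le h_0$. Combining, $d(O, P_w) \le R + h_0$, a bound independent of $w$; this is precisely $BIP$ with $D = R + h_0$.

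The step I expect to be the main obstacle is the geometric localization in the previous paragraph: it is the palindromic symmetry --- encoded in the single identity $\iota W \iota = W^{-1}$ --- that upgrades ``$O$ is close to the axis'' into ``$O$ is close to the perpendicular foot on $\E$''. One must check carefully that $\iota$ genuinely reverses $\Ax(W)$ (so that it acts as a point-reflection rather than a translation) and that the projection-commutes-with-$\iota$ step is legitimate; both follow from $W$ being loxodromic and $\iota$ preserving $\Ax(W)$. The invocation of the Morse lemma is standard once one observes that the broken geodesic and the axis are asymptotic to the same pair of endpoints, and the finiteness of $h_0$ is immediate since there are only three hyperelliptic axes.
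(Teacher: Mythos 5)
Your proof is correct and takes essentially the same route as the paper's: primitive stability makes the broken geodesics $\br_{\rho}(w;(a,b))$ uniformly quasigeodesic, so by quasigeodesic stability (the Morse lemma) every primitive axis lies within a uniform distance $R$ of the basepoint $O$, and hence the palindromic axes cut the special hyperelliptic axes in bounded intervals. Your explicit symmetry argument via $\iota W \iota = W^{-1}$ and nearest-point projection simply fills in, correctly, the final localization step ("axis close to $O$ implies crossing point close to $O$") that the paper's three-line proof leaves implicit.
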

   \begin{proof} The broken geodesic corresponding to any primitive element by definition passes through the basepoint $O$.
   The  broken geodesics $\{\br_{\rho}(u ; (a,b)) \}, u \in \P$ are by definition uniformly quasigeodesic, so each is  at uniformly bounded distance to its corresponding axis. Hence all the axes are at uniformly bounded distance to $O$ and so in particular axes corresponding to primitive palindromic elements  cut the three corresponding special hyperelliptic axes  in  bounded intervals.\end{proof}

\begin{proposition}\label{PSimpliesBQ}  The condition $PS$  implies the Bowditch $BQ$-condition.
\end{proposition}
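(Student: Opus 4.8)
The plan is to use $PS$ to force every primitive image to be loxodromic with translation length bounded below, after which both clauses of the $BQ$-condition follow almost formally. Fix the generators $(a,b)$ and let $(K,\epsilon)$ be the constants of~\eqref{qgeod1}. Given a cyclically reduced $w \in \P$, every power $w^n$ is again cyclically reduced with $\|w^n\|_{(a,b)} = n\|w\|_{(a,b)}$, and $w^n$ is a subword of the periodic bi-infinite word $\ldots w\, w \ldots$, so~\eqref{qgeod1} applies and yields $d(O, W^n O) \geq K^{-1} n \|w\|_{(a,b)} - \epsilon$ for every $n \geq 1$. Dividing by $n$ and letting $n \to \infty$, the stable translation length satisfies $\ell(W) = \lim_{n} d(O, W^n O)/n \geq K^{-1}\|w\|_{(a,b)} \geq K^{-1} > 0$, so $W$ is loxodromic (a positive stable length rules out the elliptic and parabolic cases, whose orbits grow sublinearly). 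In particular $\Tr W \notin [-2,2]$, which is the first clause of~\eqref{eqn:B2}.

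For the second clause I would use the elementary estimate $|\Tr W| \geq 2\sinh(\ell(W)/2)$ valid for any loxodromic $W$: writing the complex translation length as $\lambda = \ell + i\theta$ and using $|\cosh(\lambda/2)|^2 = \cosh^2(\ell/2) - \sin^2(\theta/2) \geq \sinh^2(\ell/2)$, one gets $|\Tr W| = 2|\cosh(\lambda/2)| \geq 2\sinh(\ell/2)$. Combined with $\ell(W) \geq K^{-1}\|w\|_{(a,b)}$ from the first step, this gives $|\Tr W| \geq 2\sinh\!\big(\|w\|_{(a,b)}/(2K)\big)$, so $|\Tr W| > 2$ as soon as $\|w\|_{(a,b)} > 2K\log(1+\sqrt 2)$. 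Since there are only finitely many extended conjugacy classes of primitive elements of bounded word length, and $|\Tr \rho(g)|$ depends only on the class of $g$, it follows that $\{g \in \P : |\Tr \rho(g)| \leq 2\}$ meets only finitely many classes; this is the second clause of~\eqref{eqn:B2}, and hence $\rho \in \B$.

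The one genuinely substantive step is the first: recognising that primitive stability applies not just to $w$ but to all of its powers $w^n$, which is precisely what converts the word-length quasigeodesic bound into a lower bound on the honest (stable) translation length of $W$. Everything after that is soft — loxodromicity immediately excludes real traces in $[-2,2]$, while the exponential lower bound on $|\Tr W|$ together with finiteness of each word-length shell gives the required finiteness. I note that these same estimates show that $PS$ implies Bowditch's Fibonacci growth of Definition~\ref{fibonaccidefn}, so one could alternatively route the argument through that property and read off the second clause of~\eqref{eqn:B2} from the lower growth bound; but the direct argument above appears to be the shortest.
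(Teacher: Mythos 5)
Your proof is correct and follows essentially the same route as the paper's: the paper's own two-line proof observes that uniformity of the constants in~\eqref{qgeod1} forces Fibonacci growth and hence finiteness of the set of primitive classes with small trace, which is exactly what your stable-length and $\sinh$ estimates establish, as you yourself note at the end. The only difference is that yours is self-contained, in particular spelling out the loxodromy argument for the first clause of~\eqref{eqn:B2}, a detail the paper delegates to the reference~\cite{lupi}.
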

  \begin{proof} This is not hard, see for example~\cite{lupi}. From primitive stability,  uniformity of constants in \eqref{qgeod1}   implies Fibonacci growth, which in turn implies that only finitely many elements have lengths and therefore traces less than a give bound (see Lemma~\ref{compare2} below). \end{proof}

The main result of this section is:
\begin{theorem} \label{thm:BQimpliesBIP} The $BQ$-condition implies the bounded intersection property $BIP$.  
\end{theorem}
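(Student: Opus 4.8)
The plan is to show that the $BQ$-condition forces the axes of all palindromic primitive elements to cut the three special hyperelliptic axes in uniformly bounded intervals. The key geometric input is the dictionary between the combinatorial structure on $\T_\rho$ and the actual geometry in $\HHH$: for a palindromic primitive element $w$ with respect to a basic pair $(\eta,\eta')$, the axis of $W=\rho(w)$ meets the corresponding hyperelliptic axis $\E$ perpendicularly, so the location of that intersection point along $\E$ is controlled by the trace data of $W$ relative to the pair generating $\E$. The idea is therefore to bound, uniformly over all such $w$, the distance from $O$ to the foot of the perpendicular, using the trace growth guaranteed by $BQ$.

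First I would fix a basic generator pair, say $(a,b)$, and recall from Proposition~\ref{uniquepalindromes} that every primitive element is conjugate to one palindromic with respect to one of the three basic pairs; so it suffices to bound intersections with $\E(A,B)$ (and symmetrically for the other two). Next I would use Proposition~\ref{sinktree}: the finite sink tree $T_F$ and the constant $M_0$ give a universal bound $|\Tr W|\le M_0$ for the finitely many regions touching $T_F$, and for every region $\bu$ outside $T_F$ there is a path of strictly decreasing arrows out of $\bu$ landing on $T_F$, so $\bu$ lies in the wake $\W(\vec e)$ of some directed edge $\vec e$ abutting $T_F$. Then by Lemma~\ref{fibonacciwake} the trace grows like $\log|\phi_\rho(\bu)|\ge cF_{\vec e}(\bu)$ away from a uniformly bounded number of exceptional regions, with $c,n_0$ independent of $\vec e$. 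The plan is to convert this trace growth, together with the recurrence along $\dd\bu$ from Lemma~\ref{finiteboundary}, into a geometric statement: the larger the trace of $W$, the shorter the segment of $\E$ that the axis of $W$ can traverse before escaping, and in particular the closer its perpendicular foot must lie to the feet of the bounded-trace elements.

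The core computation is to pin down the foot of the perpendicular from the axis of $W$ to $\E(A,B)$ in terms of traces. For a palindrome $w=e_r\cdots e_1 f e_1\cdots e_r$ the half-turn about $\E(A,B)$ conjugates $W$ to $W^{-1}$ and reverses its axis, so $W$ factors as a product of two half-turns, one of them about $\E(A,B)$; the distance along $\E$ from $O$ to the perpendicular foot is then an explicit function of $\Tr W$ and the traces of the shorter palindromic subwords obtained by stripping letters symmetrically. I would estimate this distance using the trace relations~\eqref{eqn:inverse} and~\eqref{eqn:commreln}, showing that when the relevant traces are large the foot is trapped within a bounded neighborhood determined by $M_0$, while the finitely many small-trace exceptions contribute only a bounded amount. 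Summing the geometric series coming from the exponential lower bound $\log|\phi_\rho|\ge cF_{\vec e}$ yields a uniform bound $D$ on the distance from $O$ to every intersection point.

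The main obstacle I anticipate is the passage from the combinatorial/trace estimates to the precise hyperbolic-geometric location of the perpendicular feet. Lemma~\ref{fibonacciwake} and Proposition~\ref{sinktree} supply clean control of trace magnitudes, but translating $|\Tr W|$ into a bound on where the axis of $W$ meets $\E(A,B)$ requires a careful half-turn / common-perpendicular computation and a uniform handling of the finitely many exceptional regions near $T_F$, where traces are small and the naive estimate degenerates. Getting the constant $D$ to be genuinely uniform—independent of which wake and which basic pair is involved—will be the delicate part, and is exactly where the uniformity clauses of Lemma~\ref{fibonacciwake} and the finiteness of $\Omega_\rho(M_0)$ must be invoked.
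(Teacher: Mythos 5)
Your high-level skeleton coincides with the paper's: reduce to palindromic representatives via Proposition~\ref{uniquepalindromes}, use the sink tree and the uniform Fibonacci bound (Proposition~\ref{sinktree}, Lemma~\ref{fibonacciwake}) to get exponential trace growth away from a finite core, and sum an exponentially decaying series of displacements along the hyperelliptic axis $\E$. The gap is in the step you yourself flag as the ``core computation.'' Knowing $|\Tr W|$ (even together with traces of auxiliary words) cannot by itself locate the foot of $\Ax W$ on $\E$: trace is conjugation-invariant, while the foot is not determined by the conjugacy class data you propose to feed in, so there is no ``explicit function'' of the kind you describe. What is actually needed is a \emph{two-axis} comparison: a bound on the distance between the feet of two related palindromes. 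This is precisely what the paper supplies and your proposal lacks. Proposition~\ref{shortdistances} uses the right-angled hexagon formula for a \emph{neighbouring} pair $(u,v)$, both palindromic with respect to the same basic pair and satisfying the trace inequality $|\Tr U|\geq|\Tr UV^{-2}|$ (which via Lemma~\ref{compare3} makes the hexagon term small), to get $d(\Ax U,\Ax V)\leq O(e^{-m})$; and Proposition~\ref{path}, built from the plughole Lemma~\ref{plughole} and the sink tree, produces from any $\bu\notin\Omega(M)$ a descending chain $u_0=u,u_1,\dots,u_k$ of such pairs, all palindromic with respect to the \emph{same} basic pair, ending in $\Omega(M)$. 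Only then does the Fibonacci bound (applied along this chain, where the wake hypothesis of Lemma~\ref{fibonacciwake} is actually satisfied) convert into a convergent geometric series of distances between consecutive feet. Your assertion that ``when the relevant traces are large the foot is trapped within a bounded neighborhood determined by $M_0$'' is not a step one can estimate into existence; it is essentially the whole theorem.

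Your proposed substitute mechanism --- stripping letters symmetrically from a single palindrome and using the traces of the stripped subwords --- also fails on concrete grounds: the stripped words are palindromic but in general \emph{not primitive}, so neither the $BQ$ hypothesis nor Lemma~\ref{fibonacciwake} (which lives on the regions of the Farey tree, i.e.\ on primitive classes) gives any control over their traces. For example, the palindromic representative of the class $[7/4]$ is $abaababaaba$; stripping one letter from each end gives $baababaab$, whose cyclic $a$-run structure is $(2,1,2,0)$, while every primitive positive word with exponent sums $(5,4)$ has cyclic run structure $(1,1,1,2)$ --- so $baababaab$ is not primitive, and $BQ$ says nothing about $\Tr\rho(baababaab)$. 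Moreover the stripping sequence does not track the Farey/wake combinatorics, so the lower bound $\log|\phi_\rho(\bu)|\geq cF_{\vec e}(\bu)$ cannot be transported to it. To repair the argument you would need to replace the stripping scheme by a chain of Farey-neighbouring palindromic pairs with the trace inequality (3) of Proposition~\ref{path}, together with a quantitative estimate like Proposition~\ref{shortdistances} for each consecutive pair --- at which point you have reconstructed the paper's proof.
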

The idea of the proof is the following. Suppose that $(u,v)$ is a palindromic pair of generators, so that their axes intersect one of the three special  hyperelliptic axes $\E$ perpendicularly.  The hyperbolic cosine formula expresses the perpendicular distance $d$ between $\Ax U$ and $\Ax V$  (which is measured along $\E$) in terms of the translation lengths of $U, V$ and $U V^{-1}$. Provided these lengths are sufficiently long and that $ |\Tr U | \geq | \Tr U V^{-2}|$, we get an estimate showing  $d$ is exponentially small in the minimum of  $  \ell (U) $ and $\ell(V)$ (Proposition~\ref{shortdistances}). We then move stepwise along $\E$  from its intersection point of  with $\Ax U$  to its intersection point with 
one of the axes in  $\Omega(M)$ for suitable $M$ using intermediate intervals whose end points are the intersection points with $\E$ of axes corresponding to generator pairs,  all of which are palindromic with respect to the same basic generator pair as $(u,v)$ (Proposition~\ref{path}).  The estimates of Fibonacci growth as in Lemma~\ref{fibonacciwake} show that the sum of lengths of these intervals is finite proving the result. The details follow.

We begin with two easy results. 
 For a loxodromic element $X \in \SL$ let  $\ell(X)>0$ denote the (real) translation length and let  $\lambda (X) = (\ell(X) + i \theta(X))/2$ be \emph{half} the complex length,  so that $\Tr X = \pm 2 \cosh \lambda(X)$.
  \begin{lemma}\label{compare2} There exists  $L_0>0$ so that if 
 $\xi + i \eta \in \CC$ with $ \xi >L_0 $  then  
$\xi - \log 3 \leq \log |\cosh (\xi + i \eta)|  \leq \xi$. In particular, for $X \in \SL$ we have $e^{\ell(X)} /3\leq |\Tr X|/2 \leq e^{\ell(X)}$ whenever $\ell(X) > L_0$.  Also $ |\sinh (\xi + i \eta)| \geq  e^{\xi}  /3$.
\end{lemma}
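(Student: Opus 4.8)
The plan is to reduce everything to a single $\eta$-free identity for the modulus, after which both inequalities are one-line comparisons of exponentials. First I would write $\cosh(\xi+i\eta) = \cosh\xi\cos\eta + i\sinh\xi\sin\eta$ and $\sinh(\xi+i\eta) = \sinh\xi\cos\eta + i\cosh\xi\sin\eta$, take squared moduli, and use $\cosh^2\xi = 1 + \sinh^2\xi$ to collapse the $\eta$-dependence, obtaining
\[
|\cosh(\xi+i\eta)|^2 = \sinh^2\xi + \cos^2\eta, \qquad |\sinh(\xi+i\eta)|^2 = \sinh^2\xi + \sin^2\eta.
\]
The key point is that each right-hand side lies between $\sinh^2\xi$ and $\sinh^2\xi + 1 = \cosh^2\xi$ for every $\eta$, so that $\sinh\xi \le |\cosh(\xi+i\eta)| \le \cosh\xi$ and $\sinh\xi \le |\sinh(\xi+i\eta)|$ hold with all dependence on $\eta$ removed (here $\xi>0$, so $\sinh\xi>0$).

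Next I would read off the two bounds. For the upper estimate, $\cosh\xi = (e^\xi + e^{-\xi})/2 \le e^\xi$ for every $\xi \ge 0$, so $\log|\cosh(\xi+i\eta)| \le \log\cosh\xi \le \xi$ with no lower restriction on $\xi$. For the lower estimate I would use $|\cosh(\xi+i\eta)| \ge \sinh\xi = (e^\xi - e^{-\xi})/2$ and note that, after clearing denominators, the inequality $(e^\xi - e^{-\xi})/2 \ge e^\xi/3$ is equivalent to $e^{2\xi} \ge 3$. Hence setting $L_0 = \tfrac12\log 3$ forces $|\cosh(\xi+i\eta)| \ge e^\xi/3$, i.e. $\log|\cosh(\xi+i\eta)| \ge \xi - \log 3$, whenever $\xi > L_0$; the identical chain $|\sinh(\xi+i\eta)| \ge \sinh\xi \ge e^\xi/3$ gives the final $\sinh$ assertion on the same range.

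Finally, the trace statement is pure substitution. Since $\Tr X = \pm 2\cosh\lambda(X)$ we have $|\Tr X|/2 = |\cosh\lambda(X)|$, so putting $\xi = \Re\lambda(X)$ (the imaginary part being immaterial) into the displayed inequalities and exponentiating yields $e^{\Re\lambda(X)}/3 \le |\Tr X|/2 \le e^{\Re\lambda(X)}$ once $\Re\lambda(X) > L_0$, which is the asserted two-sided bound. There is no genuine obstacle here: the only content is the first-paragraph identity that eliminates $\eta$, and the constant $L_0 = \tfrac12\log 3$ is dictated entirely by the lower bound. The one thing to watch is the normalisation, since $\Re\lambda(X)$ is half the complex translation length, so the threshold and the exponent should be kept consistently expressed in terms of $\lambda(X)$.
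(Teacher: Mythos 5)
Your proof is correct, but it takes a different route from the paper's. The paper factors out the dominant exponential, writing $|\cosh(\xi+i\eta)| = e^{\xi}\,|1+e^{-2\xi-2i\eta}|/2$, gets the upper bound from $|1+e^{-2\xi-2i\eta}|/2 \leq 1$, and gets the lower bound by choosing $L_0$ large enough (unspecified) that $|1+e^{-2\xi-2i\eta}|/2 \geq 1/3$; the $\sinh$ estimate is asserted to follow similarly. You instead use the exact $\eta$-free identities $|\cosh(\xi+i\eta)|^2 = \sinh^2\xi + \cos^2\eta$ and $|\sinh(\xi+i\eta)|^2 = \sinh^2\xi + \sin^2\eta$, which pin $|\cosh(\xi+i\eta)|$ between $\sinh\xi$ and $\cosh\xi$ and reduce everything to the scalar inequality $\sinh\xi \geq e^{\xi}/3 \iff e^{2\xi}\geq 3$. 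What your approach buys is an explicit threshold $L_0 = \tfrac12\log 3$ and a single computation that handles $\cosh$ and $\sinh$ uniformly, rather than the paper's asymptotic "for $L_0$ large enough"; the paper's factorization is marginally quicker if one does not care about the constant. On the trace statement you are also more careful than the source: since $\Tr X = \pm 2\cosh\lambda(X)$ with $\lambda(X) = (\ell(X)+i\theta(X))/2$, the clean conclusion is $e^{\ell(X)/2}/3 \leq |\Tr X|/2 \leq e^{\ell(X)/2}$ for $\ell(X)/2 > L_0$, i.e.\ the bound is naturally expressed with $e^{\Re\lambda(X)}$, whereas the lemma as printed writes $e^{\ell(X)}$; your closing remark about keeping the threshold and exponent in terms of $\lambda(X)$ identifies exactly this factor-of-two normalisation slip, which the paper's own proof (silent on the trace part) never addresses.
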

\begin{proof}
For the right hand inequality, since $ |\cosh (\xi + i \eta)| = e^{\xi}   | (1+ e^{-2\xi -2i\eta} )|/2$ we have
$$\log |\cosh (\xi + i \eta)| = {\xi} + \log  | (1+ e^{-2\xi -2i\eta} ) |/2 \leq \xi$$  since  $| (1+ e^{-2\xi -2i\eta})  |/2 \leq 1$.

For the left hand inequality, since $\xi > L_0$ we have, choosing $L_0$ large enough, $| (1+ e^{-2\xi -2i\eta})  |/2 \geq 1/3$ so that $\log  | (1+ e^{-2\xi -2i\eta} ) |/2 \geq -\log 3$ and hence 
$\log |\cosh (\xi + i \eta)| \geq \xi -\log 3$.

The estimate on $ |\sinh (\xi + i \eta)|$ follows similarly.
\end{proof}

\begin{lemma} \label{compare3} Suppose that $ \rho \co F_2 \to \SL$ and that $(u,v)$ are generators  such that $|\Tr UV| \geq |\Tr UV^{-1}|$. If $  \ell(U) , \ell(V) >L_0$  with $L_0$ as in Lemma~\ref{compare2}, then    $   \ell(U)  + \ell(V) -2 \log 3  \leq   \ell(UV) $.
 \end{lemma}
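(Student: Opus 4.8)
The plan is to translate everything into trace estimates and then apply Lemma~\ref{compare2}. Write $x = \Tr U$, $y = \Tr V$ and $z = \Tr UV$. By the trace relation \eqref{eqn:inverse} we have $\Tr UV^{-1} = xy - z$, so the hypothesis $|\Tr UV| \geq |\Tr UV^{-1}|$ reads $|z| \geq |xy - z|$. The first step is to extract from this a lower bound for $|z|$ in terms of $|xy|$: by the triangle inequality,
\[
|xy| = |(xy - z) + z| \leq |xy - z| + |z| \leq 2|z|,
\]
so that $|z| \geq |xy|/2$. This is exactly the point where the hypothesis is used, and it is used decisively: it guarantees that $|\Tr UV|$ dominates the product $|\Tr U \cdot \Tr V|$ up to the factor $2$.

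Next I would insert the comparison estimates. Since $\ell(U), \ell(V) > L_0$, Lemma~\ref{compare2} gives $|x| \geq \tfrac{2}{3}e^{\ell(U)}$ and $|y| \geq \tfrac{2}{3}e^{\ell(V)}$, whence $|xy| \geq \tfrac{4}{9}e^{\ell(U)+\ell(V)}$ and therefore
\[
|\Tr UV| = |z| \geq \tfrac{1}{2}|xy| \geq \tfrac{2}{9}e^{\ell(U)+\ell(V)}.
\]
Provided $L_0 > \log 3$ (which holds for the $L_0$ produced in Lemma~\ref{compare2}) the right-hand side exceeds $2$, so $UV$ is loxodromic and $\ell(UV)$ is defined. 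For the reverse direction I would use the upper half of Lemma~\ref{compare2}, namely $|\Tr UV| \leq 2e^{\ell(UV)}$; note that this upper bound is valid for \emph{every} loxodromic element, since its proof only requires $|1 + e^{-2\xi - 2i\eta}|/2 \leq 1$, which is automatic for $\xi \geq 0$, so no circular appeal to $\ell(UV) > L_0$ is needed. Chaining the two bounds gives
\[
2e^{\ell(UV)} \geq |z| \geq \tfrac{2}{9}e^{\ell(U)+\ell(V)},
\]
and taking logarithms yields $\ell(UV) \geq \ell(U) + \ell(V) - \log 9 = \ell(U) + \ell(V) - 2\log 3$, as required.

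I do not expect any genuine obstacle in this argument: it is a short manipulation of the two trace relations together with the two-sided comparison between trace and translation length supplied by Lemma~\ref{compare2}. The only points deserving care are the direction in which the hypothesis $|\Tr UV| \geq |\Tr UV^{-1}|$ is applied, and the observation that the upper estimate of Lemma~\ref{compare2} is available for all loxodromics, so that once $UV$ is seen to be loxodromic with large trace the final inequality follows immediately.
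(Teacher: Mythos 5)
Your proof is correct and takes essentially the same route as the paper's: the trace relation $\Tr U\,\Tr V = \Tr UV + \Tr UV^{-1}$ combined with the hypothesis yields $|\Tr U\,\Tr V| \leq 2|\Tr UV|$, and the two-sided comparison of Lemma~\ref{compare2} then converts this into the length inequality. The only differences are cosmetic (you work with exponentials rather than with $\log|\cosh\lambda|$), and you make explicit two points the paper glosses over, namely that $UV$ is indeed loxodromic and that the upper bound $|\Tr UV|/2 \leq e^{\ell(UV)}$ holds without needing $\ell(UV) > L_0$.
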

\begin{proof}
Let $\hat u = \Tr U, \hat v = \Tr V, \hat z = \Tr UV, \hat w = \Tr UV^{-1}$, so that by assumption $|\hat z| \geq |\hat w|$.
Since $\hat u \hat v = \hat z+\hat w$ this gives $|\hat u \hat v| \leq  2|\hat z|$ and hence $\log |\hat u| + \log |\hat v|  \leq \log |\hat z|  + \log |2| $. Since $ \hat u = 2 \cosh (\l(U))$ and so on,  we have
\begin{equation}\label{compare1}
2 \log 2 + \log |\cosh \l(U)| + \log |\cosh \l(V)| \leq 2 \log 2 + \log |\cosh \l(UV)|.
\end{equation}
 Together with Lemma~\ref{compare2}  this gives
$$   \ell(U)  + \ell(V) -2 \log 3  \leq   \ell(UV) $$ as required.
 \end{proof}

 We start the proof of Theorem~\ref{thm:BQimpliesBIP} with an estimate of the perpendicular distance between the axes of  `long' pairs of generators. For $t \in \RR, f\co \RR \to \RR$ write $|f(t)| \leq O(t)$ to mean there exists $c>0$, depending only on the representation $\rho$,  such that $|f(t)| \leq ct$.

 \begin{proposition} \label{shortdistances} Suppose that $(u,v)$ is a pair of generators palindromic  with respect to one of the three basic generator pairs, and suppose that  $|\Tr U| \geq |\Tr UV^{-2}|$, where as usual $U = \rho(u), V = \rho(v)$. Then with $L_0>0$ as in Lemma~\ref{compare2},  if $\ell(U),  \ell(V) > L_0$ then  $d(\Ax U, \Ax V) \leq O(e^{ -\m })$ where $ m =   \min\{ \ell(U), \ell(V)\}$, with constants depending only on $(a,b)$ and $O$.  
  \end{proposition}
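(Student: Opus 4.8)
The plan is to read off the perpendicular distance from a trace/complex-distance formula, and then convert the hypothesis $|\Tr U|\geq|\Tr UV^{-2}|$ into a bound on $\Tr UV^{-1}$ that forces the complex distance $\delta(\Ax U,\Ax V)$ to be nearly zero. Since $(u,v)$ is palindromic with respect to one of the basic generator pairs, both $\Ax U$ and $\Ax V$ meet the corresponding special hyperelliptic axis $\E$ perpendicularly; as the common perpendicular of two skew lines is unique, $\E$ \emph{is} the common perpendicular of $\Ax U$ and $\Ax V$, and the quantity to bound is the real part $d=\Re\delta$ of the complex distance $\delta=\delta(\Ax U,\Ax V)$ (its imaginary part is the twisting angle, which is irrelevant to $d$). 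Putting the two axes in normal form, or quoting the standard identity, gives
\[ \Tr UV^{\mp 1}=2\cosh\lambda(U)\cosh\lambda(V)\mp 2\cosh\delta\,\sinh\lambda(U)\sinh\lambda(V), \]
consistent with $\Tr UV+\Tr UV^{-1}=\Tr U\,\Tr V$. Solving the $UV^{-1}$ version yields
\[ \cosh\delta=\coth\lambda(U)\coth\lambda(V)-\frac{\Tr UV^{-1}}{2\sinh\lambda(U)\sinh\lambda(V)}. \]

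Now the hypothesis enters, and this is the step I regard as the crux. The edge of $\T$ separating the regions of $uv^{-1}$ and $v$ carries the regions of $u$ and $uv^{-2}$ at its two ends, so the Markoff (neighbour) relation gives $\Tr U+\Tr UV^{-2}=\Tr UV^{-1}\,\Tr V$. Combined with $|\Tr U|\geq|\Tr UV^{-2}|$ this forces
\[ |\Tr UV^{-1}|\leq\frac{2|\Tr U|}{|\Tr V|}. \]
In effect the hypothesis says that $UV^{-1}$ is the \emph{short} diagonal, which is precisely what keeps $\cosh\delta$ close to $1$; without it the correction term could be of the same size as the leading term.

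The estimate itself is then routine from Lemma~\ref{compare2}. Since $\Re\lambda(U)=\ell(U)/2$ and $\Re\lambda(V)=\ell(V)/2$ exceed $L_0$, that lemma gives $\coth\lambda(U)=1+O(e^{-\ell(U)})$, $\coth\lambda(V)=1+O(e^{-\ell(V)})$, together with $|\sinh\lambda(U)\sinh\lambda(V)|\geq\tfrac19 e^{(\ell(U)+\ell(V))/2}$, $|\Tr U|\leq 2e^{\ell(U)/2}$ and $|\Tr V|\geq\tfrac23 e^{\ell(V)/2}$. Substituting the bound on $|\Tr UV^{-1}|$ shows the correction term is $O(e^{-\ell(V)})$, so that
\[ \cosh\delta=1+O(e^{-m}),\qquad m=\min\{\ell(U),\ell(V)\}. \]

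It remains to pass from $\cosh\delta$ to $d$. Writing $\cosh\delta-1=2\sinh^2(\delta/2)$ gives $|\sinh(\delta/2)|=O(e^{-m/2})$, and since $|\sinh(\tfrac d2+i\tfrac\theta2)|^2=\sinh^2\tfrac d2+\sin^2\tfrac\theta2\geq\sinh^2\tfrac d2$ we get $\sinh(d/2)\leq|\sinh(\delta/2)|$, hence $d\leq O(e^{-m/2})$, an exponentially small bound in $m$ with constants depending only on $\rho$ (equivalently on $(a,b)$ and $O$). I expect the main obstacles to be the two setup steps rather than the arithmetic: correctly identifying $\E$ as the common perpendicular (so that $d=\Re\delta$) from palindromicity, and extracting $|\Tr UV^{-1}|\leq 2|\Tr U|/|\Tr V|$ from the hypothesis via the Markoff relation on the edge $(uv^{-1},v)$. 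The one genuine subtlety in the final estimate is the square-root loss incurred in solving $\cosh\delta=1+O(e^{-m})$ for $d$; this only affects the constant in the exponent, which is immaterial downstream, where Proposition~\ref{path} needs merely the summability of the resulting interval lengths.
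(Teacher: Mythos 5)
Your argument is correct, and at its core it is the same computation as the paper's: both express the complex distance $\delta$ between $\Ax U$ and $\Ax V$ (measured along $\E$, which palindromicity identifies as their common perpendicular) by a formula whose leading term is $\coth\lambda(U)\coth\lambda(V)\approx 1$ and whose correction term is $\cosh\lambda(UV^{-1})$ (equivalently $\Tr UV^{-1}/2$) divided by $\sinh\lambda(U)\sinh\lambda(V)$; your ``standard identity'' is exactly the paper's right-angled hexagon formula \eqref{hexformula} in algebraic form. Where you genuinely diverge is at the crux, the use of the hypothesis $|\Tr U|\geq |\Tr UV^{-2}|$. The paper converts it into a \emph{length} inequality by applying Lemma~\ref{compare3} to the pair $(V,UV^{-1})$, namely $\ell(U)\geq \ell(UV^{-1})+\ell(V)-2\log 3$; since that lemma requires $\ell(UV^{-1})>L_0$, the paper must then split into cases according to whether $\ell(UV^{-1})$ is large or small (see \eqref{hexformula2}). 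You instead convert it into a \emph{trace} inequality via the edge relation \eqref{eqn:inverse} applied to the pair $(UV^{-1},V)$, i.e.\ $\Tr U+\Tr UV^{-2}=\Tr UV^{-1}\,\Tr V$, giving $|\Tr UV^{-1}|\leq 2|\Tr U|/|\Tr V|$, and then Lemma~\ref{compare2} bounds the correction term by $O(e^{-\ell(V)})$ with no hypothesis on $UV^{-1}$ at all; this eliminates the case split and is cleaner. Two further remarks. First, your closing observation about the square-root loss is a genuine (if minor) point of care: from $\cosh\delta=\pm 1+O(e^{-m})$ one gets $\sinh^2(d/2)\leq O(e^{-m})$, hence $d=O(e^{-m/2})$, whereas the paper asserts $d=O(e^{-m})$; this discrepancy is immaterial since only exponential smallness (summability in the proof of Theorem~\ref{thm:BQimpliesBIP}) is used downstream. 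Second, your identity carries sign ambiguities ($\Tr X=\pm 2\cosh\lambda(X)$ and orientation conventions), so depending on lifts the conclusion may be $\cosh\delta=-1+O(e^{-m})$ rather than $+1+O(e^{-m})$ --- this is what happens in the paper, where $\sigma_6=\delta+i\pi$ effectively appears --- but the same endgame works in either case, since $|\cosh(\delta/2)|^2=\cosh^2(d/2)-\sin^2(\theta/2)\geq\sinh^2(d/2)$ as well as $|\sinh(\delta/2)|^2=\sinh^2(d/2)+\sin^2(\theta/2)\geq\sinh^2(d/2)$, so either sign forces $d$ to be exponentially small; it would be worth one sentence in your write-up to acknowledge this.
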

 \begin{proof}  
 Consider the right angled hexagon $H$ whose alternate sides follow the axes of $U, UV^{-1}$ and $V^{-1}$. Orienting the sides  consistently round $H$, we may define the complex distances $\s_1, \ldots , \s_6$ between the sides  in such a way that, assuming that $\Re \lambda(U) >0$ and so on,  we have  $\sigma_1 = \lambda(U)$ or $\lambda (U) + i \pi$; $\sigma_3 = \lambda(UV^{-1})$ or $\lambda (UV^{-1}) + i \pi$ and $\sigma_5 = \lambda(V^{-1})$ or $\lambda (V^{-1}) + i \pi$.   Moreover $\sigma_6 = \pm (d + i \theta)$ where $\d = d + i \theta$ is the complex distance between the correctly oriented axes of $U$ and $V$ and where we take $d \geq 0$. (See for example~\cite{serwolp} for a discussion of complex length and hyperbolic right angled hexagons, although as we shall see shortly such detail is not needed here.)


 The hexagon formula in $H$ gives: 
 
\begin{equation}
  \cosh \s_6  = \frac{\cosh \s_3 - \cosh \s_1 \cosh \s_5    } {\sinh \s_1 \sinh \s_5}
 \end{equation}

Thus   \begin{equation}\label{hexformula}
|\cosh \s_6 +1| \leq \biggl{|}\frac{\cosh \s_3} {\sinh \s_1 \sinh \s_5}\biggr{|}+  | 1-\coth \s_1 \coth \s_5|
  \end{equation}
 which in view of the remarks above can be rewritten 
  \begin{equation}
|\cosh \d +1| \leq \biggl{|}\frac{\cosh \lambda(UV^{-1})} {\sinh \lambda(U) \sinh \lambda(V^{-1})}\biggr{|}+  |1- \coth \lambda(U) \coth \lambda(V^{-1})|
  \end{equation}

 Now assume that  $\ell(U),  \ell(V), \ell(UV^{-1})  > L_0$ with $L_0$ as in Lemma~\ref{compare2}.

For $z \in \CC$ we have $\coth z = (1 + e^{-2z})/(1 - e^{-2z})$, hence for large enough $|z|$ we have $\coth z  = 1 + O(e^{-2z})$ so that  
$$   \coth \lambda(U) \coth \lambda(V)   = 1 + O(e^{-2\m}), $$
where we use
 estimates as in Lemma~\ref{compare2}: $|\cosh (\xi + i \eta)|   \leq O(e^{\xi})$ and $|\sinh (\xi + i \eta)|   \geq O(e^{\xi})$ for  $\xi>L_0$.

Similar estimates also give 
\begin{equation}\label{hexformula1}
\biggl| \frac{\cosh \lambda(UV^{-1})  } {\sinh \lambda(U) \sinh \lambda(V)} \biggr| \leq c\exp(\ell(UV^{-1}) - \ell(U) - \ell(V))/2 .
   \end{equation}

By Lemma~\ref{compare3} applied to the generator pair $(V, UV^{-1})$, since by hypothesis $|\Tr U| \geq |\Tr UV^{-2}|$,
we have $\ell(U) \geq \ell(UV^{-1}) + \ell(V)  - 2\log3$ so that $-2\ell(V) \geq \ell(UV^{-1}) - \ell(V) -\ell(U) - 2\log3$
and hence $$\biggl| \frac{\cosh \lambda(UV^{-1})  } {\sinh \lambda(U) \sinh \lambda(V)}\biggr| \leq  O(e^{-\m }).$$

Now suppose we only have $\ell(U), \ell(V)  > L_0$ while  $\ell(UV^{-1}) \leq L_0$. Then instead of the estimate in~\eqref{hexformula1} we get \begin{equation}\label{hexformula2}
\biggl |\frac{\cosh \lambda(UV^{-1})  } {\sinh \lambda(U) \sinh \lambda(V)} \biggr | \leq O(e^{-\m })
   \end{equation} 
    with a bound independent of $(U, V)$. 
(In fact in this case the estimate improves to $O(\exp(- (\ell(U) + \ell(V)))$ but we won't need this here.)    

In either case we have $$| \cosh (\delta + i \pi) - 1| \leq  O(e^{-\m}) $$
and hence $d = \Re \delta = O(e^{-\m})$.  
\end{proof}

We now proceed to estimate the distance between arbitrary pairs of palindromic axes.  Assume that the representation $\rho \in \B$.	
Choose $M_0 \geq 2$ and a finite connected non-empty subtree tree $T_F$ of $\T$ as in Proposition~\ref{sinktree}.

 \begin{lemma} \label{plughole} Let $M \geq M_0$ and suppose that $\bu \in \Omega \setminus \Omega_{\rho}(M)$. Then there is an oriented edge $\Vec e$ pointing out of $\bu$ so that $ \Vec e$ is not contained in $ T_F$.  
\end{lemma}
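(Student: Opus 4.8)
The plan is to pin down the edge by playing the trace bound coming from $\bu\notin\Omega_\rho(M)$ against Bowditch's arrow combinatorics along $\partial\bu$, and only then to rule out membership in $T_F$. The first two steps are routine; the genuine work is in the last one.

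First I would record what the hypothesis buys. Since $|\Tr U|=|\phi_\rho(\bu)|>M\ge M_0$, Proposition~\ref{sinktree} (every region adjacent to a $T_F$-edge has trace at most $M_0$, and every region touching a sink vertex has trace at most $M_0$) shows at once that \emph{no} edge of $\partial\bu$ lies in $T_F$ and that $\bu$ touches no sink vertex. In particular, as $\rho\in\B$ and $|\phi(\bu)|>2$, Lemma~\ref{forkvertex} tells me that at every vertex $\q$ of $\partial\bu$ the two edges bounding $\bu$ cannot both point away from $\q$.

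Next I would produce an edge pointing out of $\bu$. The boundary $\partial\bu$ is a bi-infinite edge path, and by Lemma~\ref{finiteboundary} its arrows point, outside a finite central segment, towards that segment. Reading the path from one infinite end to the other, the orientations must therefore pass from ``inward from the left'' to ``inward from the right'', so there is a vertex $\q$ at which both bounding edges of $\bu$ point towards $\q$. Let $\vec e$ be the third edge at $\q$, separating the two regions $\by_0,\by_1$ that meet $\bu$ there, and let $\bz$ be the region at its far end, so that $\phi(\bz)=\phi(\by_0)\phi(\by_1)-\phi(\bu)$ by the relation underlying \eqref{eqn:inverse}. If $\vec e$ also pointed towards $\q$, then $\q$ would be a sink vertex met by $\bu$, contradicting the first step; hence $\vec e$ points out of $\bu$, i.e.\ in the orientation $|\phi(\bu)|>|\phi(\bz)|$.

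It remains to arrange $\vec e\notin T_F$, and this is the step I expect to be the main obstacle. The difficulty is that $\vec e$ is not a boundary edge of $\bu$ but a radial one, so the trace bound of the first step does not apply to it: its two sides $\by_0,\by_1$ may have small trace even though $\bu$ is large. My plan is to argue that $\vec e$ is not a terminal edge of the attracting tree. Because $\vec e$ is oriented in the strictly decreasing direction out of $\bu$, I would use $|\phi(\bu)|>M$ together with $\phi(\bz)=\phi(\by_0)\phi(\by_1)-\phi(\bu)$ to show that the far region still satisfies $|\phi(\bz)|>M_0$, so that $\vec e$ lies on a strictly decreasing path that has not yet reached $T_F$; since every edge not in $T_F$ points towards it (Proposition~\ref{sinktree}), this forces $\vec e\notin T_F$. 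The delicate point is the degenerate case in which $\by_0,\by_1$, and hence possibly $\bz$, all have small trace, so that the convergence vertex supplied by Lemma~\ref{finiteboundary} drains $\bu$ directly into $T_F$. There I would instead select an outgoing radial edge at the frontier of the finite set $\Omega_\rho(M_0)$ met by $\partial\bu$, where one adjacent region necessarily has trace exceeding $M_0$, or argue by contradiction from the finiteness of $T_F$ and the connectedness of $\Omega_\rho(M)$ (Theorem~\ref{connected}); making one of these two routes rigorous is the crux of the proof.
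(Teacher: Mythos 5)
Your first two steps are correct, and they are in fact the whole of the paper's own proof: Lemma~\ref{finiteboundary} produces a vertex $\q\in\dd\bu$ at which both boundary edges of $\bu$ point inward, and the third edge $\vec e$ at $\q$ must point out of $\bu$, since otherwise $\q$ would be a sink vertex, whence by Proposition~\ref{sinktree} all three edges at $\q$ lie in $T_F$ and $\bu\in\Omega(M_0)\subset\Omega(M)$, a contradiction. The genuine gap is in your third step, and it is twofold. First, the pivotal claim --- that $|\phi(\bu)|>M$ together with $\phi(\bz)=\phi(\by_0)\phi(\by_1)-\phi(\bu)$ forces $|\phi(\bz)|>M_0$ --- is false. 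The orientation convention makes ``$\vec e$ points out of $\bu$'' mean precisely $|\phi(\bz)|\le|\phi(\bu)|$, with no lower bound whatsoever; the head region of an outgoing edge is typically \emph{small}, since descending paths run toward $\Omega(2)$ (Lemma~\ref{infiniteray}). Concretely, the local data $\phi(\bu)=100$, $\phi(\by_0)=10.05$, $\phi(\by_1)=10$ satisfies the vertex relation with $\mu\approx 151$, both boundary edges of $\bu$ at $\q$ do point toward $\q$ (the opposite regions across them have values $995$ and $989.95$), yet $\phi(\bz)=0.5$. Second, even if $|\phi(\bz)|>M_0$ were available, it would not yield $\vec e\notin T_F$: Proposition~\ref{sinktree} controls membership of an edge in $T_F$ through the two regions \emph{adjacent} to that edge, which for $\vec e$ are $\by_0$ and $\by_1$ (and, as you yourself observed, these may well be small); the end region $\bz$ is irrelevant to that criterion. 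Your two fallback routes are left as sketches, so the point you correctly identified as the crux remains unproved.

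The mechanism that actually closes this step requires no estimate at the head of $\vec e$ at all. Since $T_F$ is a \emph{finite} connected subtree, only finitely many regions meet it, so the constant $M_0$ of Proposition~\ref{sinktree} may be taken (this is the enlargement remark in its proof) to bound the trace of every region sharing a vertex with $T_F$, not merely those adjacent to its edges. With that choice, the hypothesis $\bu\notin\Omega(M)$, $M\ge M_0$, keeps $\bu$ entirely clear of $T_F$: no vertex of $\dd\bu$ can be a vertex of $T_F$, hence no edge emanating from $\dd\bu$ --- in particular the outgoing edge $\vec e$ produced in your Step 2 --- can belong to $T_F$. In other words, non-membership in $T_F$ is built into the choice of $M_0$ and the finiteness of $T_F$; the only substantive content of the lemma is the existence of an \emph{outgoing} edge, which the plughole argument already gives. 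Your proposal instead tries to prove non-membership by a trace computation at the far end of $\vec e$, which both fails as stated and aims at the wrong regions.
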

   \begin{proof}
 Label the regions adjacent to $\bu$ consecutively round $\dd \bu$ by $\by_n, n \in \ZZ$ and let $e_n$ denote the edge between  $\by_n, \bu$. By  Lemma~\ref{finiteboundary}, for large enough $|n|$ the arrows on the edges round $\dd \bu$ point in the direction of decreasing $|n|$. Thus there is at least one  $r\in \ZZ$ 
so that the heads of $e_r$ and $e_{r+1}$ meet at a common vertex $\q \in \dd \bu$.  The remaining arrow at $q$ must point out of $\bu$ for otherwise $q$ is a sink vertex and  hence by  Proposition~\ref{sinktree} all the edges meeting at $q$ are  in $T_F$, so that $\bu \in \Omega(M_0) \subset \Omega(M)$ contrary to assumption.
\end{proof}

 We call such a vertex a \emph{plughole} of $\bu$.

  \begin{proposition} \label{path}
Let $M \geq M_0$ and suppose that the representation $\rho \in \B$ and that the generator $u$ is palindromic of type $\w \in \EE$. Suppose also that $\bu \notin \Omega(M)$. Pick $\w' \neq \w$. Then  there is a sequence of generators
$u_0 = u, u_1, \ldots u_k \in \P$ such that for $i = 0, \ldots, k-1$:
\begin{enumerate}
\item $(u_i, u_{i+1})$ are neighbours.
\item $(u_i, u_{i+1})$ are palindromic with respect to $(\eta, \eta')$.
\item $|\Tr U_i| \geq |\Tr U_{i}U_{i+1}^{-2}|$.
\item $\bu_k \in \Omega(M)$  but  $\bu_i \notin \Omega(M),   0 \leq i <k$.
\end{enumerate}
\end{proposition}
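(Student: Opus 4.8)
The plan is to construct the sequence $u_0, \ldots, u_k$ by walking along the boundary $\partial \bu$ of the region $\bu$ and then stepping out at a carefully chosen point, using the orientation structure on $\T_\rho$ together with the palindromy-preservation built into Proposition~\ref{uniquepalindromes}. First I would apply Proposition~\ref{uniquepalindromes} to replace $u$ by its cyclically shortest palindromic representative with respect to $(\eta,\eta')$; this fixes the basic generator pair and, crucially, gives me a preferred neighbour $v$ of type $\eta'$ so that $(u,v)$ is palindromic with respect to $(\eta,\eta')$. The regions around $\partial \bu$ are then exactly the $\by_n = \{w^n v\}$ in the notation of the proof of Proposition~\ref{uniquepalindromes}, where $w$ is the generator associated to $\bu$, and each consecutive pair $(\by_n,\by_{n+1})$ is palindromic with respect to the same $(\eta,\eta')$. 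This is what will guarantee condition (2) automatically along the whole walk: stepping between two neighbouring regions on $\partial \bu$ keeps us inside the single palindromy class.

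Next I would invoke Lemma~\ref{plughole}: since $\bu \notin \Omega(M) \supseteq \Omega(M_0)$, there is a plughole vertex $\q \in \partial \bu$ with an oriented edge $\Vec e$ pointing out of $\bu$ and not contained in $T_F$. The idea is to travel along $\partial \bu$ from the intersection data attached to $u$ toward the plughole, then cross $\Vec e$ out of $\bu$. By Lemma~\ref{finiteboundary} the values $|\phi(\by_n)|$ increase and tend to infinity away from a finite segment, and by Proposition~\ref{sinktree} every strictly decreasing path of arrows lands on $T_F$; so following the arrows at the plughole and then continuing along strictly decreasing edges will reach an edge of $T_F$, whose adjacent regions lie in $\Omega(M_0) \subseteq \Omega(M)$ by the trace bound. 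This produces a finite path terminating at some $\bu_k \in \Omega(M)$, and by stopping at the first such region I secure condition (4). Neighbourliness (1) holds since consecutive regions in the path share an edge. The orientations guarantee we genuinely descend and hence terminate in finitely many steps.

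The step I expect to be the main obstacle is condition (3), the sign condition $|\Tr U_i| \geq |\Tr U_i U_{i+1}^{-2}|$, because this must hold at \emph{every} step, and it is precisely the hypothesis needed to apply Proposition~\ref{shortdistances} later. The natural way to control it is through the edge orientations: the inequality $|\hat{z}| \geq |\hat{w}|$ governing the arrow direction on an edge, combined with the trace relation $\hat z = \hat u \hat v - \hat w$ from~\eqref{eqn:inverse}, translates into a comparison of the relevant traces. I would arrange the walk so that at each step we move in the direction in which the boundary values are increasing, i.e. we step toward regions with larger $|\phi|$ as in Lemma~\ref{finiteboundary}, and show that this increasing condition forces $|\Tr U_i| \geq |\Tr U_i U_{i+1}^{-2}|$; concretely, $U_i U_{i+1}^{-2}$ is the generator associated to the region on the far side of the boundary edge, so its trace is one of the neighbouring boundary values, and the increasing pattern gives the required inequality. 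Verifying that the orientation pattern from Lemma~\ref{finiteboundary} can be matched up consistently with the ``outward'' direction dictated by the plughole is where the care is needed, since I must confirm that the finite decreasing segment of $\partial \bu$ does not obstruct reaching the plughole while maintaining (3).

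Once the path is built, I would record that its length $k$ is finite and that conditions (1)--(4) hold by the construction above, completing the proof. The Fibonacci-growth estimates of Lemma~\ref{fibonacciwake} are not needed for the mere existence of the path claimed in this Proposition; they enter only afterwards, when summing the distances $d(\Ax U_i, \Ax U_{i+1})$ furnished by Proposition~\ref{shortdistances}, so here I would simply verify termination and the four listed properties.
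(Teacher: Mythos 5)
Your outer scaffolding (Lemma~\ref{plughole} to find a plughole, Proposition~\ref{sinktree} for termination, and the observation that Lemma~\ref{fibonacciwake} is not needed here) agrees with the paper, but the core of your construction --- the walk along $\partial \bu$ --- contains a genuine error that breaks condition (2). The neighbours $\by_n$ of $\bu$ alternate in type between $\eta'$ and $\eta''$: at each vertex of $\partial\bu$ the three regions meeting there form a Farey triple and so realise all three types, and $\bu$ itself has type $\eta$. Hence every consecutive boundary pair $(\by_n,\by_{n+1})$ has type $(\eta',\eta'')$, and such a pair can be made palindromic only with respect to $(\eta',\eta'')$, never with respect to $(\eta,\eta')$: a cyclically shortest palindrome in the basic pair corresponding to $(\eta,\eta')$ has odd length (this is the observation in the uniqueness part of Proposition~\ref{uniquepalindromes}), so its type is $\eta$ or $\eta'$, and a type-$\eta''$ generator has \emph{no} representative palindromic with respect to $(\eta,\eta')$. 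You have transposed the roles in the proof of Proposition~\ref{uniquepalindromes}: there the region being circumnavigated is of the type \emph{absent} from the palindromy pair, whereas your $\bu$ has type $\eta$, which lies \emph{in} the pair. Two further problems: for a boundary pair one computes $u_iu_{i+1}^{-2}=u^{-1}v^{-1}u^{-(n+1)}$, which is conjugate up to inverse to $\by_{n+2}$, so condition (3) says the boundary values must \emph{decrease} in the direction of travel; your prescription to move in the increasing direction yields exactly the reverse inequality (and is anyway inconsistent with heading toward the plughole, which sits where the values are small). Lastly, after leaving $\bu$ you continue ``along strictly decreasing edges'', but that is a path of edges, not a sequence of regions, and you never extract from it generators satisfying (1)--(3).

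The repair is to delete the boundary walk altogether, which is what the paper does: each step of the sequence is a single crossing at a plughole. Both regions adjacent to the outgoing plughole edge $\vec e$ are already neighbours of $\bu_i$, of types $\eta'$ and $\eta''$ (when $\bu_i$ has type $\eta$); take $u_{i+1}$ to be the one of type $\eta'$. Then $(u_i,u_{i+1})$ is a generator pair of type $(\eta,\eta')$, so by Proposition~\ref{uniquepalindromes} it may be conjugated to be palindromic with respect to $(\eta,\eta')$, giving (1) and (2) at once; moreover the other region adjacent to $\vec e$ is that of $u_iu_{i+1}^{-1}$ and the region at the head of $\vec e$ is that of $u_iu_{i+1}^{-2}$, so the fact that $\vec e$ points \emph{out} of $\bu_i$ is precisely the inequality $|\Tr U_i|\geq|\Tr U_iU_{i+1}^{-2}|$ of condition (3). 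If $\bu_{i+1}\notin\Omega(M)$ one repeats from $\bu_{i+1}$ (the types then alternate $\eta,\eta',\eta,\eta',\dots$, so every consecutive pair stays palindromic with respect to the same basic pair), and termination follows from Proposition~\ref{sinktree} exactly as you say, with (4) secured by stopping at the first region in $\Omega(M)$.
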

 \begin{proof}  
 
 Suppose that $ \bu \notin \Omega(M)) $ and let $\Vec e$ be an oriented edge pointing out of some plughole of $\bu$. 
 Of the two regions adjacent to $\Vec e$, one, $\bu'$ say, is of type $\w'$. Set $u_0 = u, u_1 = u'$ and arrange by cyclic permutation if necessary  that $(u_0, u_1)$ is palindromic with respect to $(\w, \w')$.
Then the other region adjacent to $\Vec e$ can be chosen to be $\bu \bf{u'^{-1}}$ and the region at the head of $\Vec e$ is $\bu \bf{u'^{-2}}$. Thus $|\Tr U| \geq |\Tr UU'^{-2}|$. 

If $ \bu_1 \in \Omega(M) $  conditions (1)-(4) are satisfied with $k=1$. Otherwise we repeat the argument. The process terminates because 
by Proposition~\ref{sinktree} every descending path of arrows eventually meets $T_F$, and both regions adjacent to an edge in $T_F$ are in $\Omega(M_0) \subset \Omega(M)$. 
\end{proof}

{\sc Proof of Theorem~\ref{thm:BQimpliesBIP}} 
 Suppose the generator $u = u_0$ is palindromic with respect $ \w$ and  that $\w' \neq \w$.  Let $\E$ be the corresponding special hyperelliptic axis.
 Choose $L> L_0$ as in  Lemma~\ref{compare2} so that   $|\Tr U| > 2 e^{L}$ implies $\ell(U) >L$. With $M_0$ as in Proposition~\ref{sinktree} choose $M = \max \{M_0, 2e^L\}$.
Let $\Xi$ denote the set of axes corresponding to elements in $\bv \in \Omega(M)$ which are of types either $\w$ or $\w'$. It is sufficient to see that   $\Ax U$ meets $\E$ at a uniformly bounded distance to one of the finitely many axes in $\Xi$.

Let $u_0 = u, u_1, \ldots u_k$ be the sequence of  Proposition~\ref{path}. If $k=0$ there is nothing to prove since $\Ax U_k \in \Xi$.

Suppose $k>0$. Let $\Vec e$ be the edge emanating from the plughole of $\bu_i, 0\leq i<k$ and consider the two adjacent regions $\bu_{i+1}, \bu_{i}\bu_{i+1}^{-1}$, choosing the numbering so that  $u_{i+1}$ is of type $\w'$, while $u_{i}u_{i+1}^{-1}$ is of the third type $\w''$. 
Since $\bu_i \notin \Omega(M)$  we have $\ell(U_i)>L$ by choice of $M$. If  in addition $\ell(U_{i+1})>L$ then the pair $(u_i, u_{i+1})$ satisfy the condition of Proposition~\ref{shortdistances} 
 so that $d(\Ax U_i, \Ax U_{i+1}) \leq O(e^{-m_i}), m_i =  \min \{\ell(U_i), \ell(U_{i+1}) \}ß$.
Otherwise, $\ell(U_{i+1}) \leq L$ so that $\bu_{i+1} \in \Omega(M) \subset \Xi$ and therefore  $k = i+1$ and the process terminates.

  Let $\Vec e$ be the oriented edge between $\bu_{k-1}, \bu_k$   and let  $\W(\Vec e)$ be its wake.  Then since the edge between 
  $\bu_i, \bu_{i+1}$ is always oriented towards $\Vec e$, we see that 
  $\bu_i \in \W(\Vec e), 0 \leq i \leq k$. 
  Let $F_{\vec {e}}$ be the 
 Fibonacci function   on $\W(\Vec e)$   defined  immediately above Lemma~\ref{fibonacciwake}. 
It is not hard to see that for $0 \leq i \leq k$ we have $F_{\vec {e}}(\bu_i) \geq k-i$.
By construction, $\bu_{k-1} \notin \Omega(M) $ so that  $\bu_{k-1} \notin \Omega(2)$.  Hence we can apply Lemma~\ref{fibonacciwake} to see that there exists $c>0, n_0 \in \NN$ depending only on $\rho$ and not on $\Vec e$ such that $\log^+{|\Tr U_i|} \geq c (k-i)$ for all but at most $n_0$   of  the regions $\bu_i$.

 Hence in all cases, for all except some uniformly bounded number of the regions $\bu_i$, $ \ell(U_i)   \geq c(k-i) -\log 2$ so that $m_i\geq c'(k-i) -c'$ for some fixed $c'>0$. Since all axes $\Ax U_i$  intersect  $\E$ orthogonally in points $P_i$ say, it follows that 
  $d(\Ax U_0, \Ax U_k) $ is the sum $\sum_0^{k-1} d(\Ax U_i, \Ax U_{i+1}) $ of the distances between the intersection points of $P_i, P_{i+1}$. Hence the distance from $\Ax U = \Ax U_0$  to one of the finitely many axes in $\Xi$ is uniformly bounded above, and we are done.
\qed

 \section{Bounds on broken geodesics} \label{quasigeodesics}

In this section we prove our main result Theorem~\ref{prop:BIBQimpliesPS}. We begin by collecting
some basic results on quasigeodesics.

By a \emph{broken geodesic} we mean a path composed of a sequence of geodesic segments \\ $\ldots, s_i, s_{i+1},  \ldots$ meeting at their endpoints $P_i, P_{i+1}$, where $P_i$ is the meeting of the end point of $s_i$ with the initial point of $s_{i+1}$. We call the $P_i$ \emph{bending points}  and define the \emph{exterior bending angle}  $\phi_i$ at $P_i$ to be the angle  between the extension of $s_i$ through $P_i$ and $s_{i+1}$. Thus $s_i,s_{i+1}$ combine to form a single longer geodesic segment iff $\phi_i = 0$. The \emph{interior bending angle}  is $\pi - \phi_i$.

The following three lemmas are well known, but for the reader's convenience we provide   proofs.

\begin{lemma} \label{brokengeod} 
Given any angle $\psi>0$, there exists $L = L(\psi)>0$, depending only on $\psi$, such that if $\gamma$ is any broken geodesic whose segments are of at length at least $L$, and such that the interior bending angle at each bending point is at least $\psi$, then $\gamma$ is a quasigeodesic with constants depending only on $\psi$.
\end{lemma}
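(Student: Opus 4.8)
The plan is to reduce everything to two pieces of planar hyperbolic trigonometry applied to the triangles spanned by consecutive bending points, and then to globalise by an induction that keeps the path nearly straight. Write $P_0, P_1, \ldots, P_n$ for the bending points of $\gamma$, set $\ell_i = d(P_{i-1}, P_i) \geq L$ for the segment lengths, and note that the interior bending angle at $P_i$ is exactly the interior angle at $P_i$ of the geodesic triangle $P_{i-1}P_iP_{i+1}$, so by hypothesis this angle is $\geq \psi$. Parametrising $\gamma$ by arc length, the upper (right-hand) inequality in \eqref{qgeod} is immediate with $K=1, \epsilon = 0$, since $d(\gamma(s),\gamma(t))$ never exceeds the length of the subpath between the two points. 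The entire content is the lower bound.

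First I would record a two-segment estimate. Any geodesic triangle of $\HHH$ lies in a totally geodesic copy of $\HH^2$, so the planar cosine rule applies: if a triangle has two sides of lengths $a, b$ meeting at an angle $\gamma \ge \psi$ with opposite side $c$, then
\[
\cosh c = \cosh a \cosh b - \sinh a \sinh b \cos\gamma \ge \tfrac12(1-\cos\psi)\cosh(a+b),
\]
using $\cos\gamma \le \cos\psi$ together with the identity $\cosh a\cosh b - \cos\psi\,\sinh a \sinh b = \tfrac12(1-\cos\psi)\cosh(a+b) + \tfrac12(1+\cos\psi)\cosh(a-b)$. Hence $c \ge a + b - C_0(\psi)$ with $C_0(\psi) = \log\frac{4}{1-\cos\psi}$ depending only on $\psi$. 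The sine rule then controls the angle at the $a$-vertex: $\sin(\mathrm{angle}) = \sin\gamma\,\sinh b/\sinh c \le 2e^{C_0(\psi)}e^{-a}$, so this angle is at most $\epsilon_0(L,\psi) := \arcsin\!\big(2e^{C_0(\psi)}e^{-L}\big)$ once $a \ge L$, and $\epsilon_0 \to 0$ as $L \to \infty$.

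Next I would globalise by induction, after fixing $L$ so large that both $\epsilon_0(L,\psi) < \psi/2$ and $L \ge 2 C_0(\psi/2)$ hold. The claim is that for every $n$ one has $d(P_0, P_n) \ge \sum_{i=1}^n \ell_i - (n-1)C_0(\psi/2)$ and the angle at $P_n$ between $[P_n, P_0]$ and $[P_n, P_{n-1}]$ is at most $\epsilon_0$. For the inductive step, the angle at $P_n$ between $[P_n,P_0]$ and the outgoing segment $[P_n, P_{n+1}]$ is at least $\psi - \epsilon_0 > \psi/2$, combining the inductive angle bound with the interior-angle hypothesis via the triangle inequality for angular distances at $P_n$; applying both estimates above to the triangle $P_0 P_n P_{n+1}$, whose sides at $P_n$ have lengths $d(P_0,P_n) \ge L$ and $\ell_{n+1} \ge L$, propagates both the distance bound and the angle bound. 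Since $\sum \ell_i \ge nL$ and $C_0(\psi/2) \le L/2$, this yields $d(P_0, P_n) \ge \tfrac12\sum_{i=1}^n\ell_i$, the lower bound for all pairs of bending points.

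Finally I would extend to arbitrary $\gamma(s), \gamma(t)$: replacing $s,t$ by the nearest enclosed bending points changes arc length by at most the two end segments, and the angle bounds (which show $\gamma$ leaves $P_0$ and reaches $P_n$ nearly straight) let these corrections be absorbed into the additive constant, giving \eqref{qgeod} with constants depending only on $\psi$ through $C_0(\psi), C_0(\psi/2)$ and $L(\psi)$. The main obstacle is precisely this globalisation: the cosine estimate is purely local and its defects could a priori accumulate superlinearly, so the real work is the inductive bookkeeping that keeps the interior angles of the comparison triangles $P_0P_nP_{n+1}$ bounded below by $\psi/2$ uniformly in $n$ — which is exactly what the exponential angle decay $\epsilon_0 \to 0$ buys and what forces the lower bound imposed on $L$.
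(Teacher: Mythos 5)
Your proposal is correct in substance, but it follows a genuinely different route from the paper's proof. The paper uses the classical separating\hyphenation{hyper-plane}-hyperplane (nesting) argument: taking $L_1$ to be the finite side of a triangle with angles $0,\pi/2,\psi/2$, it observes that if two geodesic segments meet at angle at least $\psi$ and each has length greater than $L_1$, then the planes orthogonal to them at distance $L>L_1$ from the vertex are disjoint; hence for a broken geodesic with segments $\ell_i>3L$ and interior angles at least $\psi$, the half planes $H_1^-,H_1^+,\dots,H_n^-,H_n^+$ orthogonal to the segments at distance $L$ from their endpoints are pairwise disjoint and nested, so $d(s_1^-,s_n^+)\geq\sum_i(\ell_i-2L)>\sum_i\ell_i/3$. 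No induction and no angle bookkeeping are needed; the nesting does all the work. Your proof instead runs a two-quantity induction (a length lower bound with additive defect $C_0$ per bend, plus exponential decay of the angle at the moving endpoint), whose engine is the cosine-rule estimate $c\geq a+b-C_0(\psi)$; note that this estimate is precisely the converse implication to the paper's Lemma~\ref{Longbending}, proved by the same manipulation. What your route buys is explicit constants (essentially $K=2$, $\epsilon=C_0(\psi/2)$ once $L$ is large) and the quantitative ``nearly straight'' angle information, which the hyperplane argument never makes explicit; what it costs is the bookkeeping, and that is where two small repairs are needed.

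First, in your induction the comparison triangle $P_0P_nP_{n+1}$ only has angle at least $\psi-\epsilon_0>\psi/2$ at $P_n$, so the angle bound that propagates is $\arcsin\bigl(2e^{C_0(\psi/2)}e^{-\ell_{n+1}}\bigr)$, not $\arcsin\bigl(2e^{C_0(\psi)}e^{-\ell_{n+1}}\bigr)$; since $C_0(\psi/2)>C_0(\psi)$, you must define $\epsilon_0$ using $C_0(\psi/2)$ from the outset. This is harmless: $L$ can still be chosen so that this $\epsilon_0<\psi/2$, and the induction closes. Second, the extension to arbitrary $\gamma(s),\gamma(t)$ is not a matter of ``absorbing the two end segments into the additive constant'': those partial end segments can be arbitrarily long, so a naive triangle-inequality correction fails. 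The correct fix lies inside your own framework: adjoin $\gamma(s)$ and $\gamma(t)$ as extra vertices (the bending angles at the adjacent $P_i$ are unchanged, since $\gamma(s),\gamma(t)$ lie on the original segments) and rerun the induction starting from $\gamma(t)$. Your two-segment estimates never require the first or last segment to be long --- the angle decay at each step is governed by the length of the incoming full segment, which is at least $L$ --- so one obtains $d(\gamma(s),\gamma(t))\geq\tfrac12|s-t|-C_0(\psi/2)$, with the end segments entering through the multiplicative rather than the additive constant. With these two repairs your argument is complete and correct.
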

\begin{proof} 
Let $L_1$ be length of the finite side of a triangle with angles $0,\pi/2,  \psi/2$. Suppose  that lines $QP, Q'P$ make an angle of  $\a >  \psi$ at $P$ and also that $|QP|, |Q'P| > L_1$. Let $\Lambda, \Lambda'$  be the lines through $Q,Q' $ and  orthogonal to $QP, Q'P$ respectively. Then $\Lambda, \Lambda'$ do not meet.

Now pick $L>L_1$ and consider a broken geodesic with segments $s_1, \ldots, s_n$  of lengths $\ell_1, \ell_2, \ldots, \ell_n$ with $\ell_i > 3L$   and such that the interior angles between segments $s_i, s_{i+1}$ are at least 
$\psi$ for all $i$. Let  $H_i^-, H_i^+$ be  half planes orthogonal to $s_i$ at distance $L$ from the initial and final points $s_i^-, s_i^+$ of $s_i$ respectively.  Clearly $H_i^- \cap  H_i^+ = \emptyset$ and $d(H_i^+ , H_{i}^-) \geq \ell_i - 2L$. Let  $\Pi$ be the plane containing $s_i$ and $ s_{i+1}$.
Then the lines $s_i, s_{i+1}$ together with the lines  $H_i^+ \cap  \Pi$, $H_{i+1}^- \cap \Pi$ are exactly in the configuration described in the first paragraph, and hence $H_i^+ \cap  H_{i+1}^- = \emptyset$.

This shows that the half planes $H_1^-, H_1^+, \ldots H_n^-, H_n^+$ are nested and that
$$d(s_1^-, s_n^+) \geq  \sum_{i=1}^n (\ell_i - 2L)> \sum_{i=1}^n \ell_i/3 $$ which proves the result.
\end{proof}

\begin{lemma}\label{Longbending} Suppose given a hyperbolic triangle $\Delta$ with side lengths $a,b,c$ opposite vertices $A,B,C$  and angle $\psi$ at vertex $C$. 
Given $k>0$ there exist  $L, \e >0$ such that if $c \geq a+b -k$ then $\psi >\e$ whenever $a,b >L$.
\end{lemma}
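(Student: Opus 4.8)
The plan is to work directly from the second hyperbolic law of cosines at the vertex $C$, namely
\[
\cos\psi = \frac{\cosh a \cosh b - \cosh c}{\sinh a \sinh b},
\]
and to show that the hypothesis $c \ge a+b-k$ forces $\cos\psi$ to be bounded away from $1$ by an amount depending only on $k$, once $a,b$ are large. Geometrically this is the statement that a hyperbolic triangle whose side $c$ nearly saturates the triangle inequality $c \le a+b$ must have a \emph{large} (rather than small) angle at $C$, so that the conclusion $\psi > \epsilon$ is in fact quite generous.

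The key algebraic step I would carry out is to rewrite the law of cosines using $\cosh a \cosh b - \sinh a \sinh b = \cosh(a-b)$, which gives
\[
1 - \cos\psi = \frac{\cosh c - \cosh(a-b)}{\sinh a \sinh b}.
\]
Since $\cosh$ is increasing on $[0,\infty)$ and $a+b-k \ge 0$ once $L > k/2$, the hypothesis $c \ge a+b-k$ yields $\cosh c \ge \cosh(a+b-k)$, and together with the identity $\sinh a \sinh b = \tfrac12\big(\cosh(a+b) - \cosh(a-b)\big)$ this gives
\[
1 - \cos\psi \;\ge\; \frac{2\big(\cosh(a+b-k) - \cosh(a-b)\big)}{\cosh(a+b) - \cosh(a-b)}.
\]

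It then remains to estimate the right-hand side as $a,b \to \infty$. Dividing numerator and denominator by $\cosh(a+b)$, I would use that $\cosh(a+b-k)/\cosh(a+b) \to e^{-k}$ while $\cosh(a-b)/\cosh(a+b) \to 0$; hence the quotient tends to $2e^{-k}$. Consequently there is an $L$, depending only on $k$, so that $a,b > L$ forces $1 - \cos\psi \ge e^{-k}$, i.e. $\cos\psi \le 1 - e^{-k} < 1$. Setting $\epsilon = \tfrac12\arccos\big(1 - e^{-k}\big) > 0$ then gives $\psi > \epsilon$, as required.

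The only point needing care is the uniformity of this estimate over all admissible triangle shapes: the difference $a-b$ is not controlled, so one must verify that $\cosh(a-b)/\cosh(a+b)$ is genuinely small. This is exactly where the hypothesis $a,b > L$ (rather than merely $a+b > L$) enters, since it bounds $\cosh(a-b)/\cosh(a+b)$ by $2e^{-2\min(a,b)} \le 2e^{-2L}$; with both $a$ and $b$ large the estimate is uniform and no further case analysis is needed. I expect this uniformity check to be the only mild obstacle, the rest being routine hyperbolic trigonometry.
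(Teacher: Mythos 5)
Your proof is correct and follows essentially the same route as the paper: both start from the hyperbolic law of cosines at $C$ and show that $1-\cos\psi$ is bounded below by roughly $2e^{-k}$ once $a,b$ are large, which forces $\psi$ away from $0$. Your rearrangement via $\cosh a\cosh b-\sinh a\sinh b=\cosh(a-b)$ and the explicit uniformity bound $\cosh(a-b)/\cosh(a+b)\le 2e^{-2\min(a,b)}$ is only a minor algebraic variant of the paper's splitting of $\cos\psi-1$ into the terms $\bigl(\coth a\coth b-1\bigr)$ and $\cosh c/(\sinh a\sinh b)$.
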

\begin{proof} The formula 
$$ \cosh c = \cosh a \cosh b - \sinh a \sinh b \cos \gamma$$
rearranges to 
$$ \cos \gamma -1 = \bigl(\frac{\cosh a \cosh b}{\sinh a \sinh b } - 1 \bigr) - \frac{\cosh c}{\sinh a \sinh b }.$$
Since  $$\frac{\cosh a \cosh b}{\sinh a \sinh b } \to 1 \ \ \mbox{\rm and} \ \   \frac{\cosh c}{\sinh a \sinh b } \geq \frac{4e^{ a+b-k}}{2 e^ a e^ b } \geq 2 e^{-k}$$
 as $a,b \to \infty$ 
 we see that $ \cos \gamma -1$ is bounded away from $0$ giving the required bound. 
\end{proof}

\begin{lemma} \label{singleqgeod} 
Let $w$ be a cyclically shortest word in $F_2$ and let $\rho \co F_2 \to \SL$. Suppose that the image   $W = \rho(w)$  is loxodromic. Suppose also that the generators $(u,v)$ have  
 images  $U = \rho(u),V = \rho(v)$. Then the broken geodesic $\br_{\rho}(w; (u,v))$ is quasigeodesic with constants depending only on $\rho, w, $ and $( u, v)$.
\end{lemma}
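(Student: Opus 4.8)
The plan is to exploit the periodicity of the broken geodesic $\gamma = \br_{\rho}(w;(u,v))$ under the loxodromic isometry $W = \rho(w)$. By the definition of the broken geodesic, the vertex $E_1 \cdots E_n O = WO$ is the image of the initial vertex $O$ after exactly one period, and applying $W$ shifts the entire doubly-infinite vertex sequence by one period; hence $W\gamma = \gamma$, and if we parametrize $\gamma$ by arc length then $\gamma(s + \Lambda) = W\gamma(s)$, where $\Lambda = \sum_{i=1}^n d(O, E_i O)$ is the length of one period. Note $\Lambda > 0$, since otherwise $W$ would fix $O$, contradicting that $W$ is loxodromic. The upper quasigeodesic inequality is immediate, since arc length dominates distance: $d(\gamma(s), \gamma(t)) \leq |s - t|$. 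Everything therefore reduces to the lower bound $d(\gamma(s),\gamma(t)) \geq K^{-1}|s-t| - \epsilon$.

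For the lower bound I would use the nearest-point projection $\pi \co \HHH \to \Ax W$, which in $\HHH$ is $1$-Lipschitz and, because $W$ preserves $\Ax W$, is $W$-equivariant. Parametrize $\Ax W$ by signed arc length so that $W$ acts as the shift $\sigma \mapsto \sigma + \ell$, where $\ell = \ell(W) > 0$, and let $h(s)$ be the coordinate of $\pi(\gamma(s))$. Equivariance gives the cocycle relation $h(s + \Lambda) = h(s) + \ell$, while the $1$-Lipschitz property together with the arc-length parametrization gives $|h(s + r) - h(s)| \leq r \leq \Lambda$ for $0 \leq r \leq \Lambda$. For $s < t$ write $t - s = N\Lambda + r$ with $N \geq 0$ and $0 \leq r < \Lambda$; iterating the cocycle relation yields $h(t) - h(s) = N\ell + (h(s+r) - h(s)) \geq N\ell - \Lambda \geq (\ell/\Lambda)(t - s) - (\ell + \Lambda)$. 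Since $\pi$ is $1$-Lipschitz and $\Ax W$ is totally geodesic, $d(\gamma(s), \gamma(t)) \geq d(\pi\gamma(s), \pi\gamma(t)) = |h(t) - h(s)|$, so $\gamma$ is $(K, \epsilon)$-quasigeodesic with $K = \max\{1, \Lambda/\ell\}$ and $\epsilon = \ell + \Lambda$.

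All the constants depend only on $\ell = \ell(W)$ and on $\Lambda$, hence only on $\rho$, on $w$ (through its period and the images $E_i$), and on the generator images $U, V$; although $\Lambda$ is computed from the fixed basepoint $O$, the quasigeodesic property itself is independent of $O$. The essential --- and only delicate --- use of the hypotheses is the loxodromicity of $W$, which is precisely what guarantees $\ell > 0$ and thus makes the linear lower bound possible; the argument genuinely fails for parabolic $W$, whose orbit is not a quasigeodesic, and this is where the assumption that $W = \rho(w)$ is loxodromic cannot be dispensed with. The remaining points --- the cocycle relation and the $1$-Lipschitz equivariance of the projection --- are standard facts about convex projection in the CAT$(-1)$ space $\HHH$.
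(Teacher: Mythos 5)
Your proof is correct, and it rests on the same two pillars as the paper's: the $W$-periodicity of $\br_{\rho}(w;(u,v))$ and the positivity of $\ell(W)$ coming from loxodromicity. The implementation of the lower bound, however, is genuinely different. The paper argues only at the vertices $g_rO$ of the broken geodesic: writing $m-n = rk+k_1$ with $k = ||w||_{(u,v)}$, it invokes the displacement inequality $d(P,W'^{r}P) \geq \ell(W'^{r}) = r\,\ell(W)$, where $W'$ is the appropriate cyclic permutation of $w$ (hence conjugate to $W$, with the same translation length), and then absorbs the leftover $k_1$ letters by the triangle inequality, the additive error being at most one period's worth of segments. You instead work with the full arc-length parametrization and extract the linear growth from the nearest-point projection onto $\Ax W$, using its $W$-equivariance and $1$-Lipschitz property to obtain the cocycle relation $h(s+\Lambda)=h(s)+\ell$. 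The two mechanisms are close cousins---your projection argument is in effect a proof of the displacement inequality that the paper quotes as known---but each buys something: yours verifies the quasigeodesic inequality at every parameter value rather than only at vertices (so there is no implicit appeal to the vertex criterion~\eqref{qgeod1}), yields explicit constants $K=\max\{1,\Lambda/\ell\}$ and $\epsilon=\ell+\Lambda$, and avoids the cyclic-permutation bookkeeping; the paper's version is shorter and needs nothing beyond the triangle inequality and the standard fact that a loxodromic isometry displaces every point of $\HH^3$ by at least its translation length.
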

\begin{proof} 
Suppose that $||w||_{(u,v)} = k$ and number the vertices of $\br_{\rho}(w; (u,v))$ in order as $g_rO, r \in \ZZ$. We have to show that there exist constants $K,\e>0$ so that  if  $n<m$ then
$$(m-n )/K - \e\leq  d(g_nO,g_mO) \leq K(m-n) + \e.$$

Pick $c>0$ so  that $  d(O,hO) \leq c $ for  $h \in \{ u,v\}$. Clearly  $d(g_nO,g_mO) \leq c (m-n)$.
For the lower bound, write  $m -n = rk + k_1$ for $r \geq 0, 0 \leq k_1 < k$.
Then  for some cyclic permutation of $w$, say $w'$, we have, setting  $W' = \rho(w')$, 
$W'^r(g_nO) = g_{n+rk}(O)$ so that  $d(g_nO, g_{n+rk}O) \geq r \ell(W)$. 
Thus $$    d(g_nO,g_mO) \geq  d(g_nO,g_{n+rk}O) - d(g_{n+rk}O, g_mO) \geq   (m-n) \ell(W)/k -       kc - \ell(W)/k.$$
\end{proof}

 From now on, we assume that $\rho \in \B$ so that by  Theorem~\ref{thm:BQimpliesBIP},  $\rho$ satisfies $BIP$.   
The following simple consequence of $BIP$  is critical:    
\begin{lemma}\label{projtoaxis} There exists $D>0$ so that for any $u \in \P$ which palindromic with respect to one of the three basic generator pairs, we have $d(U^rO, \Ax U ) <D$ for any $r \in \ZZ$. \end{lemma}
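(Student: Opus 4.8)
The plan is to establish a uniform bound on the distance from the orbit point $U^rO$ to the axis $\Ax U$, for all palindromic primitive $u$ and all integers $r$, using only the hypothesis $BIP$ together with the basepoint $O$. First I would reduce to controlling the single point $UO$: since $U$ is an isometry preserving $\Ax U$, we have $d(U^rO, \Ax U) = d(U^{r-1}(UO), U^{r-1}\cdot(\text{foot of }UO)) $, and as $U^{r-1}$ fixes $\Ax U$ setwise and acts by translation along it, the nearest-point projection commutes with $U$, so $d(U^rO,\Ax U) = d(U^{r-1}O, \Ax U)$ in the sense that these are governed by the same transverse displacement. More carefully, the transverse distance from a point to the axis of a loxodromic element is unchanged under applying the element, because the element is an orientation-preserving isometry fixing the axis; hence $d(U^rO,\Ax U)$ is independent of $r$ and equals $d(O,\Ax U)$. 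So it suffices to bound $d(O,\Ax U)$ uniformly over all palindromic primitive $u$.

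Next I would invoke $BIP$ to locate $\Ax U$ near $O$. Since $u$ is palindromic with respect to one of the three basic generator pairs, its axis $\Ax U$ meets the corresponding special hyperelliptic axis $\E$ perpendicularly, and by Definition~\ref{defineBIP} the intersection point lies at distance at most $D$ from $O$. Thus the foot of the perpendicular from $\Ax U$ to $\E$ is within $D$ of $O$, which immediately gives $d(O,\Ax U) \leq d(O, \Ax U \cap \E) \leq D$. Combining this with the $r$-invariance from the previous step yields $d(U^rO,\Ax U) \leq D$ for every $r\in\ZZ$, which is exactly the claim (possibly after renaming the $BIP$-constant).

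The one genuine point requiring care — and what I expect to be the main obstacle — is the claim that $d(U^rO,\Ax U)$ is independent of $r$. This rests on the fact that for a loxodromic $U$ the transverse (perpendicular) distance to $\Ax U$ is preserved by $U$. The cleanest justification uses that $U$ acts on $\HHH$ as a composition of translation along $\Ax U$ with a rotation about $\Ax U$, both of which preserve distance to the axis; hence the nearest-point projection $\pi_{\Ax U}$ satisfies $\pi_{\Ax U}(UP) = U\,\pi_{\Ax U}(P)$ and $d(UP,\Ax U) = d(P,\Ax U)$. Applying this inductively gives $d(U^rO,\Ax U)=d(O,\Ax U)$ for all $r$. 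I should note that this argument uses loxodromicity of $U$; under $BQ$ (hence $BIP$) no primitive element is elliptic or parabolic, so every palindromic $U$ is indeed loxodromic and the reduction is valid.

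One should also confirm that the hypothesis covers the intended range: the statement quantifies over all $u\in\P$ palindromic with respect to a basic pair and all $r\in\ZZ$, and both the $r$-invariance and the $BIP$ bound $D$ are uniform over the three hyperelliptic axes, so the single constant $D$ from Definition~\ref{defineBIP} serves for all such $u$ simultaneously. This completes the plan, with the only substantive step being the elementary hyperbolic-geometry fact that orbit points of a loxodromic isometry stay at constant distance from its axis.
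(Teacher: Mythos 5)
Your proof is correct and follows essentially the same route as the paper: invoke $BIP$ to get a point of $\Ax U$ (its intersection with the relevant special hyperelliptic axis) within distance $D$ of $O$, and then use that $U^r$ is an isometry preserving $\Ax U$ to conclude $d(U^rO,\Ax U)=d(O,\Ax U)\le D$. Your additional discussion of nearest-point projection and loxodromicity elaborates on, but does not change, the paper's two-line argument, which rests only on the invariance of the axis under $U$.
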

\begin{proof} By $BIP$, we may assume that $\Ax U$ intersects one of the three special hyperelliptic axes at bounded distance at most $D$ to $O$, where $D$ is independent of  $u$. Since $\Ax U$ is invariant under $U$ we have also  $d(U^rO, \Ax U ) <D$ for any $r \in \ZZ$. \end{proof}

\begin{lemma}\label{applyLongbending1}  
Let  $u \in \P$ be palindromic with respect to one of the three basic generators pairs. Let    $\psi$ be the angle at vertex $O$ in the triangle with vertices $O, U^{-1}(O), U(O)$. 
Then  there are $L_1, \e_1 >0$ so that if $\ell(U) > L_1$ then $\psi >\e_1$.
\end{lemma}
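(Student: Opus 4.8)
The plan is to show that, once $\ell(U)$ is large, the triangle $\Delta$ with vertices $O$, $U(O)$, $U^{-1}(O)$ is nearly degenerate with $O$ as its apex, and then to extract the lower bound on the apex angle $\psi$ directly from Lemma~\ref{Longbending}. Since we are assuming $\rho \in \B$, Theorem~\ref{thm:BQimpliesBIP} shows that $\rho$ satisfies $BIP$, so Lemma~\ref{projtoaxis} supplies a constant $D>0$, independent of $u$, with $d(U^rO, \Ax U)<D$ for all $r \in \ZZ$; in particular $O$, $U(O)$ and $U^{-1}(O)$ all lie within distance $D$ of $\Ax U$. (A positive lower bound on $\ell(U)$ forces $U$ to be loxodromic, so $\Ax U$ is defined.)

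First I would analyse the nearest-point projections onto $\Ax U$. Let $p_0$ be the foot of the perpendicular from $O$, so $d(O,p_0)<D$. Because $U$ preserves $\Ax U$ and nearest-point projection commutes with any isometry fixing the axis, the feet of the perpendiculars from $U(O)$ and $U^{-1}(O)$ are $U(p_0)$ and $U^{-1}(p_0)$; as $U$ translates $\Ax U$ by $\ell(U)$, these sit at distance $\ell(U)$ from $p_0$ on opposite sides, so $d(U(p_0),U^{-1}(p_0))=2\ell(U)$. Writing $a=d(O,U(O))$, $b=d(O,U^{-1}(O))$ and $c=d(U(O),U^{-1}(O))$, isometry invariance gives $a=b$, and the triangle inequality together with the $D$-bounds yields
\[ \ell(U)-2D \leq a,\,b \leq \ell(U)+2D, \qquad c \geq 2\ell(U)-2D. \]
Combining these, $c \geq 2\ell(U)-2D \geq (a+b)-6D$, that is $c \geq a+b-k$ with the fixed constant $k=6D$.

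Finally I would apply Lemma~\ref{Longbending} to $\Delta$ with this $k$, taking the angle $\psi$ at the vertex $O$ opposite the long side $c$: the lemma produces $L,\e>0$ such that $c \geq a+b-k$ and $a,b>L$ force $\psi>\e$. Setting $L_1=L+2D$, the hypothesis $\ell(U)>L_1$ gives $a,b \geq \ell(U)-2D > L$, whence $\psi>\e=:\e_1$, as required. The calculation is entirely routine; the one point that genuinely carries the argument is the uniformity of $D$ over all palindromic generators $u$, which is exactly what $BIP$ (via Lemma~\ref{projtoaxis}) provides and which is what makes the resulting $L_1$ and $\e_1$ independent of $u$.
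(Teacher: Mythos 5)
Your proof is correct and takes essentially the same route as the paper: use the uniform constant $D$ from Lemma~\ref{projtoaxis} (i.e.\ $BIP$) to bound the sides of the triangle, establish $c \geq a+b-k$ for a fixed constant $k$, and invoke Lemma~\ref{Longbending}. The only cosmetic difference is that you obtain the lower bound on $c$ via nearest-point projection onto $\Ax U$, whereas the paper gets it more directly from $c = d(U^{-1}O, UO) = d(O, U^{2}O) \geq \ell(U^{2}) = 2\ell(U)$, yielding $k = 4D$ rather than your $k = 6D$.
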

\begin{proof} Let $a,b,c$ denote the lengths of the sides $O  U^{-1}(O), O  U(O), U^{-1}(O) U(O)$ respectively. By Lemma~\ref{projtoaxis} 
we have $a \leq \ell(U) +2D, b \leq \ell(U) +2D$  with $D$ as in that lemma.  Clearly $c \geq \ell(U^2) =  2\ell(U)$.
Thus $c \geq a+b -4D$ and the conclusion follows from Lemma~\ref{Longbending}.
\end{proof}

\begin{lemma}\label{applyLongbending}   Suppose that $(u,v)$ is a generator pair palindromic with respect to  one of the three basic generators pairs, and so that $|\Tr UV| \geq |\Tr UV^{-1}|$. Let $\psi$ be the angle at vertex $O$ in the triangle with vertices $O, V^{-1}(O), U(O)$. 
Then  there exist $L_2, \e_2 >0$ so that if $\ell(U), \ell(V) > L_2$ then $\psi >\e_2$.
\end{lemma}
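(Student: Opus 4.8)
The plan is to run the same argument as in Lemma~\ref{applyLongbending1}, applying the triangle estimate of Lemma~\ref{Longbending} to the triangle $\Delta$ with vertices $O, V^{-1}(O), U(O)$; here the angle at $O$ is the desired $\psi$, opposite the side $c = d(V^{-1}(O), U(O))$, while the two sides adjacent to $O$ have lengths $a = d(O, V^{-1}(O))$ and $b = d(O, U(O))$. The whole point is to produce a constant $k$, independent of $(u,v)$, with $c \geq a + b - k$, together with the fact that $a,b$ are large once $\ell(U), \ell(V)$ are large.

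First I would bound the two short sides from above. Since $(u,v)$ is a palindromic pair, both $u$ and $v$ are palindromic with respect to a basic generator pair, so Lemma~\ref{projtoaxis} applies to each, giving $b = d(O, U(O)) \leq \ell(U) + 2D$ and $a = d(O, V^{-1}(O)) \leq \ell(V) + 2D$, with $D$ the constant of that lemma. On the other hand $a \geq \ell(V)$ and $b \geq \ell(U)$, since the displacement of $O$ by an isometry is at least its translation length; hence $a, b > L$ as soon as $\ell(U), \ell(V) > L$.

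Next I would bound the long side $c$ from below. Applying the isometry $V$ to both endpoints gives $c = d(V^{-1}(O), U(O)) = d(O, VU(O)) \geq \ell(VU) = \ell(UV)$, the last equality because $VU$ and $UV$ are conjugate. Now the hypothesis $|\Tr UV| \geq |\Tr UV^{-1}|$ is exactly what is needed to invoke Lemma~\ref{compare3}: provided $\ell(U), \ell(V) > L_0$, it yields $\ell(UV) \geq \ell(U) + \ell(V) - 2\log 3$. Combining with the upper bounds on $a,b$,
\[
c \;\geq\; \ell(U) + \ell(V) - 2\log 3 \;\geq\; (a+b) - (4D + 2\log 3),
\]
so $c \geq a + b - k$ with $k = 4D + 2\log 3$ independent of the pair $(u,v)$.

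Finally I would apply Lemma~\ref{Longbending} with this $k$: it produces $L, \e > 0$ so that $c \geq a + b - k$ forces the angle $\psi$ at $O$ to satisfy $\psi > \e$ whenever $a, b > L$. Setting $L_2 = \max\{L_0, L\}$ and $\e_2 = \e$ then finishes the proof, since $\ell(U), \ell(V) > L_2$ guarantees both $\ell(U), \ell(V) > L_0$ (needed for Lemma~\ref{compare3}) and $a, b > L$. The only point requiring care is the bookkeeping of which generator is palindromic and the conjugacy identity $\ell(UV) = \ell(VU)$; I expect no real analytic obstacle here, as all the hard estimates have already been isolated in Lemmas~\ref{compare3}, \ref{projtoaxis} and~\ref{Longbending}.
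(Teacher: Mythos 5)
Your proposal is correct and follows essentially the same route as the paper: the paper's proof uses the identical triangle with $a = d(O,V^{-1}(O))$, $b = d(O,U(O))$, $c = d(V^{-1}(O),U(O))$, the bounds $a \leq \ell(V)+2D$, $b \leq \ell(U)+2D$ from Lemma~\ref{projtoaxis}, the lower bound $c \geq \ell(UV) \geq \ell(U)+\ell(V)-2\log 3$ via Lemma~\ref{compare3}, and then Lemma~\ref{Longbending}. Your write-up merely supplies details the paper leaves implicit (the conjugacy identity $\ell(VU)=\ell(UV)$ and the lower bounds $a \geq \ell(V)$, $b \geq \ell(U)$ needed to guarantee $a,b>L$).
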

\begin{proof} 
The proof is similar to that of the preceeding lemma with  $a = d(O , V^{-1}(O)), b= d(O,  U(O)),c= d( V^{-1}(O), U(O))$. As before $a \leq \ell(V) +2D, b \leq \ell(U) +2D$ while clearly $c \geq \ell(UV)$.
From Lemma~\ref{compare3} it follows that for large enough $L_2$, 
$$ \ell(UV) \geq \ell(U) + \ell(V) -2 \log 3.$$ Applying Lemma~\ref{Longbending} gives the result.
\end{proof}

Thus we have proved:
\begin{proposition}\label{longwordsqgeod} There exists $L_3>0$ with the following property. Suppose that  for a palindromic generator pair $(u,v)$  we have $\ell(U), \ell(V) >  L_3$ and that $|\Tr UV| \geq | \Tr UV^{-1}|$. 
 Let $\C (u,v)$ denote the set of all cyclically shortest words  in positive powers of $u$ and $v$.
 Then the collection of broken geodesics $\{\br_{\rho}(w; (u,v)), w \in \C  (u,v)\}$ is uniformly quasigeodesic, with constants depending only on $(u,v)$. 
\end{proposition}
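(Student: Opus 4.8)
The plan is to show that any broken geodesic $\br_{\rho}(w;(u,v))$ for $w \in \C(u,v)$ satisfies the hypotheses of Lemma~\ref{brokengeod}, namely that its segments are uniformly long and its interior bending angles are uniformly bounded below. Since $w$ is a cyclically shortest word in positive powers of $u$ and $v$, the broken geodesic visits vertices of the form $\ldots, O, U^{i_1}O, U^{i_1}V^{j_1}O, \ldots$ where consecutive runs of the same generator are grouped together. The segments of the broken geodesic are therefore translates of the individual generator steps $O \to UO$ and $O \to VO$, and the bending points are exactly the vertices of $\br_{\rho}(w;(u,v))$.

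First I would dispose of the segment-length requirement. Because $\ell(U), \ell(V) > L_3$ and, by Lemma~\ref{projtoaxis}, the base point $O$ lies within bounded distance $D$ of $\Ax U$ and $\Ax V$, each single step $d(O, UO)$ and $d(O, VO)$ is comparable to $\ell(U)$ respectively $\ell(V)$, hence uniformly long once $L_3$ is chosen large enough. More care is needed where a run of powers of one generator meets a run of powers of the other, but within a run the vertices $U^rO$ all lie within $D$ of $\Ax U$ by Lemma~\ref{projtoaxis}, so consecutive steps along $\Ax U$ essentially concatenate into one long geodesic segment of length $\geq \ell(U) - 2D$ per power; the whole run has length growing with the number of powers. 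Thus the effective broken-geodesic segments between genuine bends are all long.

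Next comes the angle estimate, which is the crux. There are two kinds of bending points. At a transition within a single run (from $U^rO$ to $U^{r+1}O$) the relevant angle is the one controlled by Lemma~\ref{applyLongbending1}: the interior bending angle at $U^rO$ in the triangle $U^{r-1}O, U^rO, U^{r+1}O$ is, after translating by $U^{-r}$, exactly the angle $\psi > \e_1$ of that lemma, uniformly bounded below once $\ell(U) > L_1$. At a transition between generators (a vertex of the form $gO$ where the incoming step is along a power of $u$ and the outgoing along a power of $v$), the relevant angle is, after an appropriate translation, the angle at $O$ in the triangle $V^{-1}O, O, UO$ — precisely the configuration of Lemma~\ref{applyLongbending}, giving $\psi > \e_2$ provided $\ell(U), \ell(V) > L_2$ and $|\Tr UV| \geq |\Tr UV^{-1}|$, which holds by hypothesis. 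Setting $L_3 = \max\{L_1, L_2\}$ and $\e = \min\{\e_1, \e_2\}$, every interior bending angle is at least $\e$.

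With both hypotheses of Lemma~\ref{brokengeod} verified uniformly — segment lengths bounded below by a constant depending only on $L_3$ and the bounds $\ell(U), \ell(V) > L_3$, and interior angles bounded below by $\e$ — that lemma yields that each $\br_{\rho}(w;(u,v))$ is quasigeodesic with constants depending only on $\e$ and the length bound, hence only on $(u,v)$ through $\ell(U), \ell(V)$ and the intersection constant $D$. The main obstacle I anticipate is the bookkeeping at the generator-transition vertices: one must confirm that translating the triangle $gV^{-1}O, gO, gUO$ back to $V^{-1}O, O, UO$ genuinely realizes the angle of Lemma~\ref{applyLongbending}, and that grouping powers into runs does not create any short segment or unaccounted-for small angle at the junctions. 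Once that matching is made precise, the result follows directly by assembling Lemmas~\ref{applyLongbending1}, \ref{applyLongbending}, and~\ref{brokengeod}.
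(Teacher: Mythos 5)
Your proposal takes essentially the same route as the paper, whose entire proof is: choose $L_1, L_2$ as in Lemmas~\ref{applyLongbending1} and~\ref{applyLongbending}, set $\psi = \min\{\e_1,\e_2\}$, and take $L_3 = L(\psi)$ as in Lemma~\ref{brokengeod}. The only correction needed is your choice $L_3 = \max\{L_1,L_2\}$: to satisfy the segment-length hypothesis of Lemma~\ref{brokengeod} you must also take $L_3 \geq L(\min\{\e_1,\e_2\})$, i.e. $L_3 = \max\{L_1, L_2, L(\psi)\}$ (your run-grouping in the second paragraph is unnecessary, since each individual segment already has length $d(O,UO) \geq \ell(U) > L_3$, every vertex is treated as a bending point, and the angle lemmas do the rest).
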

\begin{proof} Choose $L_1,L_2$ as in Lemmas~\ref{applyLongbending1} and~\ref{applyLongbending} and then choose $\psi = \min \{\e_1,\e_2\}$. Then choose $L_3 = L(\psi)$ as in Lemma~\ref{brokengeod}.  
\end{proof}

Given a generator pair $(u,v)$, we now
 apply the above results to generator pairs of the form  $(u^Nv, u^{N+1}v),  N \in \ZZ$.

\begin{corollary} \label{applyLongbending2} Let  $(u,v)$ be a generator pair $(u,v)$ such that $(u^{-1}v,v)$ is palindromic and 
let  $N \in \ZZ$. Then there exist 
$m_0 \geq 2,  \e_2>0$, depending on $u,v$ but not on  $N$, with the following property. Suppose that $m \geq m_0$ and that 
$\bu \in \Omega(m)$
while   ${ \bu^N\bv} , {\bu^{N+1}\bv}  \notin \Omega(m)$. Then  the interior angle at $O$ in the triangle with vertices  $(U^NV)^{-1}O , O, U^{N+1}V O$ is at least $\e_2$.
\end{corollary}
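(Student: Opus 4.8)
The plan is to recognise the triangle in the statement as an instance of the triangle appearing in Lemma~\ref{applyLongbending}, applied to a suitable reordering of the pair $(u^N v, u^{N+1}v)$, and then to check that lemma's hypotheses with constants independent of $N$. Writing $P = U^N V = \rho(u^N v)$ and $Q = U^{N+1}V = \rho(u^{N+1}v)$, the vertices $(U^N V)^{-1}O, O, U^{N+1}V O$ are $P^{-1}O, O, QO$, which is exactly the triangle $O, V'^{-1}(O), U'(O)$ that Lemma~\ref{applyLongbending} attaches to the generator pair $(u',v') = (u^{N+1}v, u^N v)$, with $U' = Q$ and $V' = P$; in particular the two angles at $O$ coincide. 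So I would reduce the Corollary to verifying the three hypotheses of Lemma~\ref{applyLongbending} for $(u',v')$.

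For the palindromicity hypothesis, I would first note that the regions around $\dd \bu$ occur in the order $\ldots, u^{-1}v, v, uv, u^2 v, \ldots$, so $(u^N v, u^{N+1}v)$ is a neighbouring, hence generating, pair for every $N$. Then I would invoke the inductive step of Proposition~\ref{uniquepalindromes}, which shows that once a single consecutive pair on $\dd\bu$ is palindromic with respect to a basic generator pair $(\eta,\eta')$, so is every consecutive pair $(u^N v, u^{N+1}v)$. The hypothesis that $(u^{-1}v, v)$ is palindromic supplies exactly such a pair.

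For the orientation hypothesis, the computation $U'V'^{-1} = QP^{-1} = U^{N+1}V\,V^{-1}U^{-N} = U$ gives $\Tr U'V'^{-1} = \Tr U = \phi(\bu)$, while \eqref{eqn:inverse} applied to $(Q,P)$ gives $\Tr U'V' = \Tr Q\,\Tr P - \Tr U$. Since the Corollary assumes $|\phi(\bu)| \leq m$ while $|\Tr P|, |\Tr Q| > m$, I obtain $|\Tr U'V'| \geq |\Tr Q|\,|\Tr P| - |\Tr U| > m^2 - m \geq m \geq |\Tr U'V'^{-1}|$ once $m \geq 2$, so the required inequality $|\Tr U'V'| \geq |\Tr U'V'^{-1}|$ holds. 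For the length hypothesis I would use Lemma~\ref{compare2} to fix $m_0 \geq 2$ so large that $|\Tr X| > m_0$ forces $\ell(X) > L_2$, with $L_2$ as in Lemma~\ref{applyLongbending}; then $|\Tr P|, |\Tr Q| > m \geq m_0$ yield $\ell(V') = \ell(P) > L_2$ and $\ell(U') = \ell(Q) > L_2$. Lemma~\ref{applyLongbending} then delivers an angle at least $\e_2$.

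The step I expect to be the real content is the independence of the output constants from $N$, which is not automatic since $(u',v')$ itself depends on $N$. The point is that the constants $L_2, \e_2$ in Lemma~\ref{applyLongbending} are produced from Lemma~\ref{Longbending} with the single input $k = 4D + 2\log 3$, where $D$ is the bounded-intersection constant of Lemma~\ref{projtoaxis}, and this $D$ is uniform over all palindromic generators. Hence the same $L_2$ and $\e_2$ serve every pair $(u^{N+1}v, u^N v)$, while $m_0$ is likewise chosen once through the $N$-independent trace-to-length estimate of Lemma~\ref{compare2}. This is exactly what makes $m_0$ and $\e_2$ depend on $u, v$ and $\rho$ but not on $N$, as required.
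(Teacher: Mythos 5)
Your proposal is correct, and its skeleton is exactly the paper's: reduce the Corollary to Lemma~\ref{applyLongbending} applied to the pair $(u^Nv,u^{N+1}v)$, get palindromicity from the hypothesis on $(u^{-1}v,v)$ via the rewriting $(u^Nv,u^{N+1}v)=((vx^{-1})^Nv,(vx^{-1})^{N+1}v)$ with $x=u^{-1}v$ as in Proposition~\ref{uniquepalindromes}, and choose $m_0$ through the trace-to-length comparison of Lemma~\ref{compare2}. The one place you diverge is in verifying the orientation hypothesis $|\Tr U'V'|\geq|\Tr U'V'^{-1}|$ (with $U'V'^{-1}=U$): the paper argues that since $\Omega(m)$ is connected (Theorem~\ref{connected}) and $\bu\in\Omega(m)$ while $\bu^N\bv,\bu^{N+1}\bv\notin\Omega(m)$, the Farey-sum region $\bu^N\bv\,\bu^{N+1}\bv$ cannot lie in $\Omega(m)$, whence $|\Tr U^NV\,U^{N+1}V|>m\geq|\Tr U|$; you instead get the same inequality directly from the edge relation \eqref{eqn:inverse}, $|\Tr U'V'|\geq|\Tr U'|\,|\Tr V'|-|\Tr U|>m^2-m\geq m$ for $m\geq 2$. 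Both are valid; yours is slightly more elementary in that it bypasses the connectivity theorem, at the cost of nothing. You also spell out the $N$-independence of $L_2,\e_2$ (tracing them back to Lemma~\ref{Longbending} with $k=4D+2\log 3$, $D$ the uniform $BIP$ constant), which the paper leaves implicit in the phrase ``the result follows with $\e_2$ as in Lemma~\ref{applyLongbending}''; making that explicit is a genuine improvement in exposition, since the pair $(u^Nv,u^{N+1}v)$ itself varies with $N$.
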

\begin{proof} This is Lemma~\ref{applyLongbending} applied to the generator pair $ (u^Nv, u^{N+1}v)$. Choose $L_2$ as in Lemma~\ref{applyLongbending} and then choose $m_0$ so that $|\Tr X | > m_0 $ implies $ \ell(X) > L_0$ for $X \in \SL$ (use $L_0$ as in Lemma~\ref{compare2}).
Note that, for any $m \geq 2$,  since $\Omega(m)$ is connected  and ${ \bu^N\bv} , {\bu^{N+1}\bv}  \notin \Omega(m)$ while $\bu \in \Omega(m)$ it follows that ${ \bu^N\bv \bu^{N+1}\bv} \notin \Omega(m)$ so that $|\Tr U^NV  U^{N+1}V| \geq |\Tr U|$.

Set $x = u^{-1}v$ so that by the hypothesis $(x,v)$ is a palindromic pair. Then, as in the  proof of Proposition~\ref{uniquepalindromes},  $(u^Nv, u^{N+1}v) = ((vx^{-1})^Nv, (vx^{-1})^{N+1}v)$ is also palindromic and the result follows with $\e_2$ as in Lemma~\ref{applyLongbending}. 
\end{proof}

\begin{lemma} \label{replaceUNV} For any palindromic generator pair $(u,v)$, there exist  $n_0 \in \NN$ and $\a>0$, depending only on $U$ and $V$,  so that  $d(O,U^NVO)   \geq \a    |N |$ whenever $ |N |> n_0$.  \end{lemma}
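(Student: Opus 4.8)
The plan is to exploit the fact that, since we are assuming $\rho \in \B$, every primitive element is loxodromic, so that $U$ translates along its axis $\Ax U$ by the positive amount $\ell(U)$. The point $U^NVO$ is obtained by applying $U^N$ to the \emph{fixed} point $Q := VO$, and because $U^N$ preserves $\Ax U$ the displacement $d(Q, U^N Q)$ grows linearly in $|N|$. A single application of the triangle inequality, together with the observation that $d(O,VO)$ is a constant depending only on $V$, then yields the claim. Note that neither palindromicity of $(u,v)$ nor the property $BIP$ is actually needed here; all that is used is that $U$ is loxodromic, which $BQ$ guarantees for every primitive element.

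First I would record that $\rho \in \B$ forces $|\Tr U| > 2$ (by the $BQ$-condition of Definition~\ref{defineBQ}), hence $U$ is loxodromic with $\ell(U) > 0$. Next, writing $Q = VO$ and letting $\pi$ denote nearest-point projection onto the geodesic $\Ax U$, I would use the two standard facts that $\pi$ is distance non-increasing and that $U^N$ commutes with $\pi$, the latter because $U^N$ is an isometry preserving $\Ax U$. Since $\pi(Q) \in \Ax U$ and $U$ moves every point of its axis by exactly $\ell(U)$, this gives
$$ d(Q, U^N Q) \ \geq \ d\bigl(\pi(Q),\, \pi(U^N Q)\bigr) \ = \ d\bigl(\pi(Q),\, U^N \pi(Q)\bigr) \ = \ |N|\,\ell(U). $$

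Finally, the triangle inequality yields
$$ d(O, U^N V O) \ \geq \ d(Q, U^N Q) - d(O, Q) \ \geq \ |N|\,\ell(U) - d(O, VO). $$
Setting $C = d(O,VO)$ and choosing $n_0 = \lceil 2C/\ell(U)\rceil$, so that $|N| > n_0$ forces $|N|\,\ell(U) - C \geq \tfrac12 |N|\,\ell(U)$, the asserted bound holds with $\alpha = \ell(U)/2$. Both constants depend only on $U$ and $V$ (and the fixed basepoint $O$), exactly as required.

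I do not anticipate a serious obstacle. The only point meriting care is the standard metric fact that nearest-point projection onto a geodesic in $\HH^3$ is $1$-Lipschitz and commutes with isometries stabilising that geodesic; invoking this lets me avoid the explicit hyperbolic displacement formula for $d(Q, U^N Q)$ in terms of the distance from $Q$ to $\Ax U$, which would otherwise require a short but fiddly computation. If one preferred to avoid the projection argument, the same linear lower bound on $d(Q, U^N Q)$ could instead be obtained directly from that displacement formula, but the projection route is cleaner and makes the dependence of the constants transparent.
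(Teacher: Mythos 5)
Your proof is correct, and its skeleton --- the triangle inequality $d(O,U^NVO) \geq d(VO,\,U^NVO) - d(O,VO)$ followed by a linear lower bound for the displacement under $U^N$ --- is the same as in the paper's one-line proof. The difference is in how the linear bound is justified. The paper invokes Lemma~\ref{projtoaxis}, the consequence of $BIP$ (hence of palindromicity), to place $O$ within distance $D$ of the relevant axes, arriving at $d(O,U^NVO) \geq |N|\,\ell(U) - \ell(V) - 2D$; your argument instead uses the intrinsic fact that a loxodromic isometry displaces \emph{every} point by at least its translation length (via the $1$-Lipschitz nearest-point projection onto $\Ax U$, which commutes with $U^N$), giving $d(VO,U^NVO) \geq |N|\,\ell(U)$ with no reference to where the axes sit. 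This is a genuine, if small, improvement: as you observe, neither palindromicity nor $BIP$ is needed, only loxodromy of $U$, which the $BQ$-condition supplies; the paper's version ties the constants to $D$ and $\ell(V)$, while yours keeps them as $\ell(U)/2$ and $d(O,VO)$. One inaccuracy to fix: $\rho \in \B$ does \emph{not} force $|\Tr U| > 2$. The $BQ$-condition gives $\Tr U \notin [-2,2]$, and a complex trace of modulus at most $2$ lying off that segment is still loxodromic. Since all you actually use is $\ell(U) > 0$, the argument is unaffected, but the claim as written should be replaced by ``$\Tr U \notin [-2,2]$, hence $U$ is loxodromic.''
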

\begin{proof} 
With suitable choice of $n_0$ and $\a$ we have 
\begin{equation}
d(O, U^NVO) \geq d(O, U^NO) -d(O, VO) \geq  |N | \ell(U) - \ell(V) -2D \geq \a  |N | \ \ \mbox{for} \ \  |N| \geq n_0.
\end{equation}
\end{proof} 


Our next result allows us to deal simultaneously with all words in  generator pairs $(u^Nv, u^{N+1}v)$, $ N \in \ZZ$.
 \begin{proposition}\label{longwordsqgeod1} Let $(u,v)$ be a  generator pair such that the pair $(u^{-1}v,v)$ is palindromic. Suppose that $\hat w = \hat w(u^Nv, u^{N+1}v)$ for $ N \in \ZZ$ is a word written in positive powers of the generators  $(u^Nv, u^{N+1}v)$ and let $ w$ be the same element of $F_2$ written in terms of $(u,v)$. Suppose that $u,v,m$ satisfy the conditions of Corollary~\ref{applyLongbending2}. Then  there exists $n_1 \in \NN$ so that $ \br_{\rho}(w; (u,v))$ is  quasigeodesic with constants depending on $(u,v)$ but not on $N$ for any $|N| \geq n_1$.  
 \end{proposition}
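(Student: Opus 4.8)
The plan is to pass to the coarser scale of the super-generators $A = U^N V$ and $B = U^{N+1}V$ and to compare the fine broken geodesic $\br_\rho(w;(u,v))$ with the coarse broken geodesic built from $A,B$. Write $\hat w = s_1 s_2 \cdots$ with each $s_j \in \{u^N v, u^{N+1}v\}$ and let $Q_i = \rho(\hat w_i)O$ be the images of the prefixes $\hat w_i = s_1\cdots s_i$; these are exactly the block-boundary vertices of $\br_\rho(w;(u,v))$. Let $\Gamma$ be the coarse broken geodesic joining consecutive $Q_i$ by geodesic segments. Throughout, $u,v$ are fixed, so $\ell(U),\ell(V)$, the constant $D$ of Lemma~\ref{projtoaxis}, and $d(O,VO)$ are fixed positive numbers; only $N$ varies.

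First I would show $\Gamma$ is uniformly quasigeodesic via Lemma~\ref{brokengeod}. Its super-segments have length $d(O,AO)$ or $d(O,BO)$, which by Lemma~\ref{replaceUNV} exceed $\alpha|N|$; choosing $n_1$ with $\alpha n_1 \ge L(\psi)$ makes them longer than the threshold of Lemma~\ref{brokengeod}. For the angles, note that $(A,B)$ is a palindromic pair (as in the proof of Corollary~\ref{applyLongbending2}), that $\ell(A),\ell(B) > L_2$ since $|\Tr A|,|\Tr B| > m \ge m_0$, and that $|\Tr AB| \ge |\Tr AB^{-1}|$ because $AB^{-1} = U^{-1}$ gives $|\Tr AB^{-1}| = |\Tr U| \le m$ while $\bu^N\bv\bu^{N+1}\bv \notin \Omega(m)$ forces $|\Tr AB| > m$. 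Each interior bending angle is the angle at $O$ of a triangle $S^{-1}O, O, S'O$ with $S,S'\in\{A,B\}$; the four transition types are covered uniformly by Lemma~\ref{applyLongbending1} (for $A\to A$, $B\to B$) and by Lemma~\ref{applyLongbending} applied to $(B,A)$ and to $(A,B)$ (for $A\to B$, $B\to A$, the first being Corollary~\ref{applyLongbending2}). Taking $\psi$ to be the minimum of the resulting bounds makes $\Gamma$ quasigeodesic with constants independent of $N$.

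Next I would control the fine path inside one super-block. Since $u$ is primitive and $\rho\in\B$, the $BQ$-condition gives $\Tr U\notin[-2,2]$, so $U$ is loxodromic with $\ell(U)>0$ fixed. The $U$-power portion $O,UO,\ldots,U^NO$ has all vertices within $D$ of $\Ax U$ by Lemma~\ref{projtoaxis}, hence (by convexity of distance to a geodesic) the whole sub-path lies in the $D$-neighbourhood of $\Ax U$; as $U$ translates by $\ell(U)>0$ along $\Ax U$, this sub-path is itself a quasigeodesic with constants depending only on $\ell(U),D$ and fellow-travels $[O,U^NO]$. The last vertex $U^NVO=Q_{i+1}$ is within the fixed bound $d(O,VO)$ of $U^NO$, so the thin triangle $(Q_i, U^NO, Q_{i+1})$ shows the whole fine sub-path of the block stays within a fixed distance $R$ of $[Q_i,Q_{i+1}]$ and is a uniform quasigeodesic from $Q_i$ to $Q_{i+1}$ of length comparable to $d(Q_i,Q_{i+1})$.

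Finally I would assemble the two scales. The previous step puts $\br_\rho(w;(u,v))$ within a fixed Hausdorff distance $R$ of the uniform quasigeodesic $\Gamma$, sharing the vertices $Q_i$, and locally non-backtracking since on each block it is a uniform quasigeodesic making forward progress $\asymp d(Q_i,Q_{i+1})$. By stability of quasigeodesics (the Morse lemma), $\Gamma$ lies within a fixed distance of the true geodesic $\gamma^*$ joining the endpoints, so orthogonal projection of $\br_\rho(w;(u,v))$ onto $\gamma^*$ is $1$-Lipschitz and, because the $Q_i$ project with gaps $\asymp d(Q_i,Q_{i+1})$ and the path does not backtrack within blocks, moves coarsely monotonically at a definite rate. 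Since each fine step has length between two fixed positive constants, word length is comparable to arc length, and this yields the lower bound in~\eqref{qgeod1} with constants independent of $N$; the upper bound is immediate from subadditivity. The main obstacle is precisely this scale mismatch: because $U$ has small trace, the fine path contains short steps and small bending angles at the $U$-to-$V$ corners, so Lemma~\ref{brokengeod} cannot be applied to it directly. The resolution is to prove quasigeodesicity at the coarse scale, where the uniform estimates of Lemma~\ref{replaceUNV}, Lemma~\ref{applyLongbending} and Corollary~\ref{applyLongbending2} apply, and then to transfer it to the fine path through the axis-proximity estimate of Lemma~\ref{projtoaxis}, checking that none of the comparison constants degrade as $N\to\infty$.
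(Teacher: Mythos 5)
Your proposal is correct in outline, and its first half is the paper's argument: both establish that the coarse broken geodesic through the block vertices $Q_i$ is uniformly quasigeodesic by feeding the length bound of Lemma~\ref{replaceUNV} and the angle bounds coming from palindromicity of $(U^NV,U^{N+1}V)$ into Lemma~\ref{brokengeod}. (You are in fact more careful here than the paper, which cites only the $A\to B$ angle of Corollary~\ref{applyLongbending2} and leaves the $A\to A$, $B\to B$ and $B\to A$ transitions, which need Lemma~\ref{applyLongbending1} and Lemma~\ref{applyLongbending} applied to $(B,A)$, implicit; your trace verification $AB^{-1}=U^{-1}$ matches the paper's.) The two arguments diverge in the transfer to the fine path. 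The paper reduces an arbitrary pair of vertices to a subword $yx_1\cdots x_Mz$ with partial blocks $y=u^hv$, $z=u^{h'}$ at the ends: short partial blocks are absorbed into the additive constant, while a long one is extended to a full period, the fine vertex $Q$ is replaced by its perpendicular foot $Q'$ on the first coarse segment (with $|QQ'|<D_0$ by stability, via \cite{BH}), and Lemma~\ref{brokengeod} is reapplied to the broken path $Q'P_1\cdots P_{M+1}$, whose bending angle at $P_1$ is exactly the coarse angle. You instead show each block's fine sub-path is a uniform quasigeodesic fellow-travelling its coarse segment and finish globally with the Morse lemma and nearest-point projection onto the comparison geodesic. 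That is a viable alternative, and arguably more standard; what the paper's route buys is that it never needs your one unproved assertion, namely that the projection ``moves coarsely monotonically at a definite rate.'' That quasi-monotonicity is a standard property of projections of quasigeodesics, but it is precisely the content the perpendicular-foot construction supplies concretely, and it is what handles the delicate case of two vertices in the same or adjacent blocks, where the coarse gaps alone give nothing; you should prove or cite it explicitly. One small citation slip: Lemma~\ref{projtoaxis} applies to palindromic elements, and under the hypotheses only $u^{-1}v$ and $v$ (hence $u^Nv$, $u^{N+1}v$) are palindromic, not $u$ itself; fortunately the fact you need --- that all $U^rO$ lie at bounded distance from $\Ax U$ --- holds trivially with constant $d(O,\Ax U)$ depending only on $u$, which the statement permits.
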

\begin{proof} 
We have to show that  the distance between \emph{any} two vertices of $ \br_{\rho}(w; (u,v))$ is bounded below by their distance in the word $w$. For simplicity we may assume that $N>0$; the other case is similar.
We first deal with subsegments of the form $x_1x_2 \ldots x_{M} $ where each  $x_i$ is either $ u^Nv$ or $u^{N+1}v$, and handle initial and final segments later. For simplicity we write $\hat N$ to indicate either $N$ or $N+1$ as the case may be.

Label the vertices of  the broken geodesic $ \br_{\rho}(x_1x_2 \ldots x_M; (u^Nv, u^{N+1}v))$ as $P_1, \ldots P_{M+1}$ so that, after translation if needed, $P_1=O, P_2 = X_1O,  P_3 = X_1X_2O,  \ldots, P_{M+1} = X_1X_2 \ldots X_M O$. 

Choose $\e_2>0$ as in Corollary~\ref{applyLongbending2}. Since $|\Tr U^NV| \to \infty $ as $|N| \to \infty$ we can choose $n_1$ so that $|N| > n_1$ implies $\ell(U^NV) > L$ where $L = L(\e_2)$ is as in Lemma~\ref{brokengeod}.
Then  by Lemma~\ref{brokengeod}, the broken geodesic consisting of geodesic segments joining $P_0, P_1, \ldots, , P_{M+1}$   is quasigeodesic, in particular, there exists $c>0$ so that 
\begin{equation} \label{fullperiod} |P_1 P_{M+1}| \geq c(\sum_1^{M} |P_i P_{i+1}|) - c. \end{equation}  
By Lemma~\ref{replaceUNV}  there exists $\a>0$ so that  $  |P_i P_{i+1}| \geq  \alpha \hat N $  giving  (after slight adjustment of constants since $|| x_i||_{(u,v)} = \hat N + 1$  not $\hat N$)     an inequality of the form 
\begin{equation} \label{firsttry} |d(O, X_1X_2 \ldots X_MO)| \geq c'||x_1x_2 \ldots x_M||_{(u,v)} - c. \end{equation}

It remains to deal with possible initial and final segments of $w$ which are not full periods of $u^{\hat N}v$.  This means we have to consider  words of the form $yx_1x_2 \ldots x_{M}z$ where $x_i$ are as above and $y$ and $z$ are respectively initial  and final segments of the form 
$u^hv$ and $u^{h'}$  for $ h,h' \in \{0,\ldots, \hat N-1\}$.

Consider a word of type $yx_1x_2 \ldots x_{M}$; the other cases are similar. If $h \leq r_0$ (for some $r_0$ to be chosen below) then the total length of the broken geodesic $\br_{\rho}(y; (u,v))$ is at most 
$r_0 d(O,UO) + d(O,VO) \leq a(r_0   \ell(U) +   \ell(V) ) < K$ say. Concatenation with such segments at the beginning of $\br_{\rho}(x_1x_2 \ldots x_{M}; (u,v))$
adds at most a bounded constant and the desired inequality follows from~\eqref{firsttry}.

Otherwise adjoin before the initial segment $\br_{\rho}(y; (u,v))$  extra images of $O$ under $U$ so that, again after translation, we have a full period $O, UO,  \ldots, U^{N-h}O, U^{N-h+1}O, \ldots, U^{N}O, U^{N}VO$, with $e$ corresponding to the segment $ U^{N-h+1}O, \ldots, U^{N}O, U^{N}VO$. 
Let $P_0 = O, Q = U^{N-h}O, P_1 = U^NVO$ and let $P_2, P_3, \ldots, P_{M+1}$ be the remaining vertices of  $ \br(\hat w; (u^Nv, u^{N+1}v))$, so that $P_1, P_2, \ldots, P_{M+1}$  is the  same broken geodesic as before, translated by $U^NV$.  We need a lower bound of the form $d(Q, P_{M+1}) \geq c (h + ||x_1x_2 \ldots x_M||_{(u,v)} ) - c$.

 
As above, the broken geodesic joining points $P_0P_1P_2 \ldots P_{M+1}$ is uniformly quasigeodesic (with constants depending on $U,V$ but not $M,N$).   
Let $Q'$ be the foot of the perpendicular from $Q$ onto $P_0P_1$.   Lemma~\ref{replaceUNV} implies that the broken arc joining $P_0= O, UO, \ldots U^NO, U^NVO=P_1$ is quasigeodesic with constants depending only on $(U,V)$. Hence it is within uniformly bounded Hausdorff distance of the arc $P_0P_1$, (\cite{BH} III H Theorem 1.7) so that $|QQ'| < D_0$ for some fixed $D_0$.
Thus we have  comparisons 
\begin{equation} \label{compareperp} |Q'P_i | -D_0  \leq |QP_i| \leq  |Q'P_i | + D_0, i = 1 , \ldots,  M+1.\end{equation} 
By Lemma~\ref{replaceUNV} again we have
$|QP_1| > \a h$, so that 
for  $h>r_0$ for suitable $r_0$ we have  $|Q'P_1| >L$ with $L = L(\e_2)$ as in Lemma~\ref{brokengeod}. Hence   the broken arc  $Q'P_1P_2 \ldots P_{M+1}$  is uniformly quasigeodesic and so $$d(Q,  P_{M+1})  \geq d(Q',  P_{M+1}) -D_0 \geq c(|Q'P_1| + \sum_1^{M} | P_i P_{i+1}|) - c'$$
for suitable $c, c'>0$. Using Lemma~\ref{replaceUNV} once more together with~\eqref{fullperiod} and~\eqref{compareperp} gives the result.
 \end{proof}

We are finally ready to  prove our main result: 
 
  \begin{theorem} \label{prop:BIBQimpliesPS} If  a representation $\rho \co F_2 \to \SL$ satisfies the $BQ$-condition, then $ \rho$ is primitive stable. 
 \end{theorem}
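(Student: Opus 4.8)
The plan is to show that the broken geodesic $\br_\rho(w;(a,b))$ is uniformly quasigeodesic for every cyclically shortest primitive word $w$, invoking Lemma~\ref{brokengeod}: it suffices to find a single pair of constants $(L,\psi)$ so that each such broken geodesic decomposes into subsegments of length at least $L$ meeting at interior angles at least $\psi$. The key structural device is Proposition~\ref{uniquepalindromes}: every generator pair is conjugate to a palindromic one, and conjugation does not affect the quasigeodesic constants (it merely translates the broken geodesic by a fixed isometry and changes the basepoint). So I would reduce at the outset to palindromic generator pairs $(u,v)$, and then split the primitive elements into two regimes according to the trace/length of the associated regions in $\Omega$.

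First I would fix $M\ge M_0$ large (as in the proof of Theorem~\ref{thm:BQimpliesBIP}), so that $\Omega(M)$ is finite by Theorem~\ref{connected}. There are then only finitely many generator pairs $(u,v)$ both of whose regions lie in $\Omega(M)$; for each such pair, every $w\in\C(u,v)$ gives a broken geodesic that is quasigeodesic by Lemma~\ref{singleqgeod}, and since finitely many pairs are involved the constants can be taken uniform. The substantial case is a generator $u$ whose region lies outside $\Omega(M)$, i.e.\ $\ell(U)>L$. Here the idea is to group the letters of $w$ into blocks: a generic block is a full syllable $u^{\hat N}v$ arising from a neighbouring pair $(u^Nv,u^{N+1}v)$ with $\bu\in\Omega(m)$ but the two neighbours outside $\Omega(m)$. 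Proposition~\ref{longwordsqgeod1} handles exactly such words, giving a quasigeodesic with constants independent of $N$, and Corollary~\ref{applyLongbending2} supplies the uniform lower angle bound $\e_2$ at each block junction. The remaining long syllables $u^N$ are dealt with by Lemma~\ref{applyLongbending1}, whose angle bound $\e_1$ controls bending at each individual vertex inside a long power.

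The crux is to assemble these local estimates into a single global decomposition valid for \emph{all} primitive $w$ simultaneously, with constants depending only on $\rho$. My plan is: set $\psi=\min\{\e_1,\e_2\}$ and $L=L(\psi)$ from Lemma~\ref{brokengeod}; then argue that any cyclically shortest $w$, read off the Farey tree, breaks at vertices lying in $\Omega(M)$ into pieces each of which is either a power $u^N$ with $\ell(U)>L$ (angle controlled by $\e_1$) or a syllable of a palindromic neighbouring pair (angle controlled by $\e_2$), with the junction angles between consecutive pieces again bounded below because the joining edge of the tree points out of an $\Omega(m)$ region. The finitely many exceptional primitive elements with traces $\le M$ — those fully inside $\Omega(M)$ — contribute only bounded-length initial and terminal segments, absorbed into the additive constant $\e$ exactly as in the initial/final-segment analysis of Proposition~\ref{longwordsqgeod1}.

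The main obstacle I anticipate is the bookkeeping of this decomposition uniformly over the whole Farey tree: showing that every cyclically shortest primitive word, with respect to the \emph{fixed} pair $(a,b)$, can be expressed as an alternation of the two block types with \emph{all} bending angles (both internal to blocks and at block junctions) bounded below by the \emph{same} $\psi$, and that the finitely many ``short'' generators do not force infinitely many short segments into some $w$. This requires using the connectivity of $\Omega(m)$ (Theorem~\ref{connected}) and the sink-tree structure (Proposition~\ref{sinktree}) to guarantee that as we traverse $w$ we only finitely often pass through $\Omega(M)$, so that the quasigeodesic estimate \eqref{qgeod1} closes with constants $(K,\e)$ depending only on $\rho$. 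Once the decomposition and its uniform angle bounds are in place, Lemma~\ref{brokengeod} finishes the argument.
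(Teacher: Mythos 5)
Your proposal assembles the right ingredients --- Lemma~\ref{brokengeod}, the palindromic reduction via Proposition~\ref{uniquepalindromes}, Lemma~\ref{applyLongbending1}, Corollary~\ref{applyLongbending2} and Proposition~\ref{longwordsqgeod1} --- and its two-regime shape parallels the paper's proof, but the two places where you pass from local estimates to uniform constants are genuinely gapped, and they are exactly where the content of the theorem lies. The first concrete error is your treatment of pairs $(u,v)$ with both regions in $\Omega(M)$: Lemma~\ref{singleqgeod} gives quasigeodesic constants depending on the \emph{word} $w$ (its proof produces an additive error of order $c||w||$, coming from crudely bounding the excursion within a single period of $w$), so it cannot yield uniform constants over the infinitely many primitive words in $\C(u,v)$ attached to even one such pair. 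Note that if $|\Tr A|, |\Tr B| \leq M$, then this ``finite'' case already contains the whole theorem for positive primitive words in $(a,b)$, so no appeal to finiteness of the set of pairs can dispose of it. The paper applies Lemma~\ref{singleqgeod} only to a genuinely finite set of exceptional conjugacy classes (regions in $\Omega(m)$, or regions sharing an edge with $\Omega(m)$ together with a second adjacent region in $\Omega(m)$); every other word is handled by rewriting it in a different, long, generator pair.

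Second, the step you yourself flag as the main obstacle --- assembling the local angle bounds into a decomposition valid for all primitive $w$ --- is the actual proof, and your sketch of it (an ``alternation of two block types'' breaking at vertices lying in $\Omega(M)$, with controlled junction angles) does not match how primitive words sit in the Farey tree: a primitive word does not alternate between blocks attached to different generator pairs; up to conjugacy it is a positive word in a \emph{single} pair, namely the pair adjacent to any edge whose wake contains its region. The paper's assembly exploits exactly this. By Lemma~\ref{descendingpath} (resting on Lemma~\ref{infiniteray}), together with the vertex classification $\Int^{\V}\Omega(m)$, $\dd_*^{\V}\Omega(m)$, $\dd_{N_0}^{\V}\Omega(m)$ and Lemma~\ref{finiteboundary}, every primitive region outside a finite exceptional set lies in the wake of a vertex of $\dd_*^{\V}\Omega(m)$, so its word is wholly a positive word either in one of finitely many long palindromic pairs (Proposition~\ref{longwordsqgeod}, which you never invoke) or in a pair $(u^{n}\hat u, u^{n+1}\hat u)$ with $\bu$ drawn from a finite set and $n$ arbitrary (Proposition~\ref{longwordsqgeod1}, whose point is uniformity in $n$). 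This produces a finite list $\S$ of generator pairs, and passing from $\br_{\rho}(w;(s,s'))$ back to $\br_{\rho}(w;(a,b))$ then costs only finitely many changes of constants. Without this classification, and with Lemma~\ref{singleqgeod} misapplied, your argument does not close; in particular your remark that elements of $\Omega(M)$ ``contribute only bounded-length initial and terminal segments'' conflates the exceptional primitive classes (which are whole words, treated individually) with the partial syllables handled inside the proof of Proposition~\ref{longwordsqgeod1}.
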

\begin{proof} Suppose that $\rho \in \B$ and let $m\geq 2$. For each $\bu \in \Omega(m)$ choose $ u \in \bu$ and fix some neighbour $\hat u$ of $u$, chosen so that $(u^{-1}\hat u, \hat u)$ is a palindromic pair. Note that the regions adjacent to $\bu$ are of the form $\bu^r \hat \bu$ for $r \in \ZZ$. Let  $\V$ denote the vertices of the tree $\T$.
 For $\q \in \V$, denote by $\N(\q)$ the three regions   abutting at $\q$.
We make the following definitions:
 
 \begin{itemize}
 
  \item  $\Int^{\V}\Omega(m)$ is the set of $\q \in \V$ for which  $|\phi_{\rho}(\bu)| \leq m$ for all $\bu \in \N(\q)$.  
 
 \item  $\dd^{\V}\Omega(m)$ is the set of $\q \in \V$ for which  $|\phi_{\rho}(\bu)| \leq m$ for some $\bu \in \N(\q)$ and 
  $|\phi_{\rho}(\bu')| > m$  for some $\bu' \in N(\q)$.
   
 \item  $\dd_*^{\V}\Omega(m)$ is the set of $\q \in \dd^{\V}\Omega(m)$ for which only one  $\bu \in \N(\q)$ is in $\Omega(m)$.

\item For $N _0 \in \NN$,  $\dd_{N_0}^{\V}\Omega(m)$ 
is the set of $\q \in \dd^{\V}\Omega(m)$ for which $\N(\q) = \{\bu, \bu^N \hat \bu, \bu^{N+1} \hat \bu  \}$ with $\bu \in \Omega(m)$ and  $|N| \geq N_0$.
 \end{itemize}

Choose $L_3$ as in Proposition~\ref{longwordsqgeod} and choose $m>0$ so that $|\phi (X)| > m$  implies $\ell(X) > L_3, X \in \SL$. There are only finitely many regions $\bu$ with $|\phi (\bu)| \leq m$ so $|Int^{\V}\Omega(m)|$ is finite. By Lemma~\ref{finiteboundary},  traces
 increase round $\dd \bu$ for any $\bu \in \Omega(m)$, hence  $\dd^{\V}\Omega(m) \setminus \dd_*^{\V}\Omega(m)  $ is finite.
Finally, choose $N_0$ large  enough for Lemma~\ref{replaceUNV}  and so that 
$\ell(U^N \hat U)> L_3$ whenever $|N| > N_0$.
 It follows that  $  \dd_{N_0}^{\V}(\Omega(m)) \subset   \dd_{*}^{\V}\Omega(m)$   and  that
$\dd^{\V}\Omega(m) \setminus   \dd_{N_0}^{\V}\Omega(m)$ is finite.

\begin{lemma}\label{descendingpath} Suppose that $ \q \notin \Int^{\V} \Omega(m) \cup \dd^{\V} \Omega(m) $. Then $\q$ is connected by a finite descending path to  a vertex in $\dd^{\V} \Omega(m) $. Moreover the first such point along this path is   in  $\dd_*^{\V} \Omega(m) $.\end{lemma}
\begin{proof} By Lemma~\ref{infiniteray} there is a finite descending path to an edge one of whose neighbouring regions is in $\Omega(2)$.  So there must be a first vertex $\q_*$ exactly  one of whose neighbours is in $\Omega(m) $, so that $\q_* \in \dd_{*}^{\V} \Omega(m) $. \end{proof}

Given a vertex $\q \in \dd_{*}^{\V} \Omega(m) $,  there is a unique oriented edge $\vec e$  whose head is $\q$  and which is not contained in $ \dd^{\V} \Omega(m) $.
We define  the  \emph{wake} of $\q$ to be the wake of $\vec e$.
 
 Suppose that a region $\bv$ is not in $\Omega(m)$. If $\bv$ has a vertex in common with $\Omega(m)$, then it has an edge $e$ say in common with $\Omega(m)$. Let $\bz \in \Omega(m)$ be the other region adjacent to $e$, and let $\by, \by'$ denote the  other regions adjacent to $\bv$ at the two ends of $e$. Note there are at most finitely many regions $\bv$ for which at least one of $\by, \by'$ is in $\Omega(m)$; for all other $\bv$ the two end vertices of $e$ are in $\dd_{*}^{\V}\Omega(m)$.

Otherwise $\bv$ has no vertex in common with $\Omega(m)$. Pick any oriented edge $\vec e$ of  $\dd \bv$.  It follows from Lemma~\ref{descendingpath} that $\vec e$ is connected   by a finite descending path which first meets $\dd^{\V} \Omega(m) $ in some vertex $ \q_{*} (\bv) \in \dd_*^{\V} \Omega(m) $, and $\bv$  is in the wake of  $\q_{*}(\bv)$.
 Moreover since $\dd_{*}^{\V}\Omega(m) \setminus  \dd_{N_0}^{\V}\Omega(m)$ is finite, all but finitely many of these wakes 
 land in  $  \dd_{N_0}^{\V}(\Omega(m)$.

In summary we have shown that any region  is either in $\Omega(m)$; or has an  edge in common with $\Omega(m)$ with at least one other adjacent region in $\Omega(m)$; or is in the wake of a descending path which lands at  some point $ \q_{*} \in \dd_*^{\V} \Omega(m) \setminus  \dd_{N_0}^{\V}\Omega(m$; or finally is in the wake of a descending path which lands at  $\q_{*} \in \dd_{N_0}^{\V} \Omega(m) $.

By Lemma~\ref{singleqgeod}, the broken geodesic $\br_{\rho}(w; (a,b))$ constructed from each of the finitely many regions $\bu$ of the first two types is quasigeodesic  with constants depending on $\bu$ for any $ w  \sim u$.

Suppose that $\bz$ in the wake of a vertex in $\dd_{*}^{\V}\Omega(m) \setminus  \dd_{N_0}^{\V}\Omega(m)$. 
Let $(x,y)$ be the palindromic pair of generators adjacent to the edge $e$ whose head is  $\q_{*} (\bz)$. (These will be of the form $(u^{n} \hat u, u^{n+1} \hat u)$ for some $n\in \ZZ$.) Since  $\ell(X), \ell(Y) > L$, since the arrow on $e$ points into $\Omega(m)$, and since $z$ is, up to cyclic permutation,  a product of positive powers of $x,y$, by Proposition~\ref{longwordsqgeod}
the collection of such broken geodesics $\br_{\rho} (z; (x,y))$ is uniformly quasigeodesic with constants depending on $(x,y)$.

Finally suppose that $\bz$ is in the wake of a descending path which lands at  $\q_*(\bz) \in \dd_{N_0}^{\V} \Omega(m) $.  
The neighbours of the edge whose head is $\q_*(\bz)$ are of the form $(\bu^n \hat \bu , \bu^{n+1} \hat \bu )$ where $|n| \geq N_0$. Note that, up to cyclic permutation, any  $z \in \bz$ can be written as a product of positive powers of $(u^n \hat u , u^{n+1} \hat u)$. Hence 
by Proposition~\ref{longwordsqgeod1},  the collection of  such broken geodesics $\br_{\rho}(z; (u, \hat u))$   is uniformly quasigeodesic with constants depending only on $(u,\hat u)$.

Putting all this together,  there is a finite set of generator pairs $\S$, such that any   $w \in F_2$ can be expressed as a word in some $(s,s' ) \in \S$  in such a way that   $\br_{\rho}(w; (s,s' ))$  is quasigeodesic with constants depending only on $(s,s' )$. The quasigeodesic  $\br_{\rho}(w; (s,s' ))$ can be replaced by a broken geodesic $\br_{\rho}(w; (a,b ))$
which is also quasigeodesic  with a change of constants. The total number of  replacements required involves only finitely many constants and the result follows.
 \end{proof}

 

\end{document}